\documentclass[10pt, a4paper]{article}
\usepackage{cmap}
\usepackage[main=english,russian]{babel}
\usepackage[utf8]{inputenc}
\usepackage[text={8in,10in},centering]{geometry}
\usepackage{amsfonts,amsmath,amsxtra,amsthm,amssymb,latexsym, tabularray}
\usepackage{graphicx}
\usepackage{verbatim}
\usepackage{cite, color}
\usepackage{subcaption}
\usepackage{float}
\usepackage{comment}
\newtheorem{lemma}{Lemma}
\newtheorem{theorem}{Theorem}

\newtheorem{proposition}{Proposition}
\newtheorem{corollary}{Corollary}

\newtheorem{example}{Example}

\def\s{\sigma}

\def\C{{\mathbb C}}

\newcommand{\Id}{\operatorname{Id}\nolimits}

\newcommand{\ad}{\operatorname{ad}\nolimits}
\newcommand{\const}{\operatorname{const}\nolimits}

\newcommand{\SU}{\operatorname{SU}\nolimits}

\newcommand{\tr}{\operatorname{tr}\nolimits}

\newcommand{\su}{\mathfrak{su}}

\newcommand{\arcosh}{\operatorname{arcosh}}

\newcommand{\be}[1]{\begin{equation}\label{#1}}
\newcommand{\ee}{\end{equation}}

\begin{document}

\title{The Lorentzian Problem on the Group $\SU(2)$}
\date{}
\author{A.Z. Ali, Yu.L. Sachkov}

\maketitle


\section{The Lie Group $\SU(2)$ and the Lie Algebra $\su(2)$}

Denote by $\C^{2 \times 2}$ the space of $2 \times 2$ complex matrices. The Lie group $\SU(2)$ is defined as
$$
\SU(2) = \{q \in \C^{2 \times 2} \mid qq^* = q^*q = \Id, \ \det q = 1\},
$$
where for a matrix $q = \left(\begin{array}{cc} a & b \\ c & d \end{array}\right)$ we denote $q^* = \left(\begin{array}{cc} \bar a & \bar c \\ \bar b & \bar d \end{array}\right)$. Thus,
$$
\SU(2) = \left\{ \left.\left(\begin{array}{cc} z_1 & z_2 \\ -\bar z_2 & \bar z_1 \end{array}\right) \in \C^{2 \times 2} \,\right|\, |z_1|^2 + |z_2|^2 = 1\right\}.
$$
The Lie algebra of this Lie group is
$$
\su(2) = \left\{X \in \C^{2 \times 2} \mid X+X^*=0, \ \tr X = 0\right\}.
$$
This Lie algebra is spanned by the elements $i\s_j$, $j = 1, 2, 3$, where
$$
\s_1 = \left(\begin{array}{cc} 0 & 1 \\ 1 & 0 \end{array}\right), \qquad
\s_2 = \left(\begin{array}{cc} 0 & -i \\ i & 0 \end{array}\right), \qquad
\s_3 = \left(\begin{array}{cc} 1 & 0 \\ 0 & -1 \end{array}\right)
$$
are the Pauli matrices.

Define left-invariant vector fields on the group $\SU(2)$:
$$
X_j(q) = q\left(-\frac{i}{2} \s_j\right), \qquad j = 1, 2, 3, \quad q \in \SU(2).
$$

Since
$$
[\sigma_1,\sigma_2] = 2i\sigma_3,\quad [\sigma_3,\sigma_1] = 2i \sigma_2,\quad [\sigma_2,\sigma_3] = 2i\sigma_1,
$$
the following relations hold:

$$
[X_1,X_2] = X_3, \quad [X_3,X_1] = X_2,\quad [X_2,X_3] = X_1.
$$

We pose the Lorentzian problem on the group $\SU(2)$ with the orthonormal frame $X_1$, $X_2$, $X_3$:
\begin{align}
&\dot q = u_1 X_1 + u_2 X_2 + u_3 X_3, \qquad q \in \SU(2), \label{pr1}\\
&u_1 \geq \sqrt{u_2^2 + u_3^2}, \label{pr2}\\
&q(0) = q_0 = \Id, \qquad q(t_1) = q_1, \label{pr3}\\
&\int_0^{t_1} \sqrt{u_1^2 - u_2^2 - u_3^2} dt \to \max. \label{pr4}
\end{align}

\section{Controllability}

Since $\SU(2)$ is a compact and connected Lie group, and also $Lie(\Gamma) = L$, where $\Gamma $ and $L$, we can use Theorem 6.2 from \cite{lie_control}:

\begin{theorem}
A left-invariant system $\Gamma \subset L$ is completely controllable on a compact connected Lie group $G$ if and only if $Lie(\Gamma)=L$.
\end{theorem}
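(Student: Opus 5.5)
The necessity direction is immediate. Let $\mathcal{A}$ denote the attainable set from the identity. It is contained in the connected Lie subgroup $H$ whose Lie algebra is $Lie(\Gamma)$. Complete controllability means $\mathcal{A}=G$, so $H=G$ and $Lie(\Gamma)=L$. The substance of the proof is sufficiency, which I would prove in three steps.

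\emph{Step 1: the closure of $\mathcal{A}$ is a group.} Assume $Lie(\Gamma)=L$. For a left-invariant system, $\mathcal{A}$ is a subsemigroup of $G$: it contains $\Id$, and it is closed under products because left translations map trajectories to trajectories. By the Krener (Rashevsky--Chow-type orbit) theorem, the full rank condition gives that $\mathcal{A}$ has nonempty interior in $G$. Its closure $\overline{\mathcal{A}}$ is then a closed subsemigroup of a compact group. Such a semigroup is a group. To see this, take $g\in\overline{\mathcal{A}}$. By compactness, the sequence $g^n$ has a convergent subsequence, so for every neighbourhood $U$ of $\Id$ there are $n_k<m_k$ with $g^{m_k-n_k}\in U$. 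It follows that $g^{-1}$ is a limit of powers $g^{m}$ with $m\ge 1$, and hence $g^{-1}\in\overline{\mathcal{A}}$.

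\emph{Step 2: $\overline{\mathcal{A}}=G$.} The set $\overline{\mathcal{A}}$ is a closed subgroup with nonempty interior, so it is open. Since $G$ is connected, $\overline{\mathcal{A}}=G$.

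\emph{Step 3: $\mathcal{A}=G$.} Here I would use the standard fact that for a full-rank system the interior of $\mathcal{A}$ is dense in $\mathcal{A}$, and therefore $\operatorname{int}\mathcal{A}$ is dense in $G$. Applying the same reasoning to the time-reversed system $-\Gamma$ gives that $\mathcal{A}^{-1}$, the set of points from which $\Id$ can be reached, also has dense interior. Two open dense sets meet, so $\operatorname{int}\mathcal{A}$ and $\mathcal{A}^{-1}$ share a point $h=a=b^{-1}$ with $a,b\in\mathcal{A}$. Then $\Id=ba\in \mathcal{A}$ is reached in positive time and in fact lies in $\operatorname{int}\mathcal{A}$: indeed $b\cdot\operatorname{int}\mathcal{A}\subset \mathcal{A}$ is an open set containing $\Id$. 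A semigroup containing a neighbourhood $V$ of $\Id$ contains $\bigcup_n V^n$, and this union equals $G$ because $G$ is connected. Hence $\mathcal{A}=G$.

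\emph{Main obstacle.} I expect the hardest part to be the density step: showing that the interior of the attainable set is dense, and that $\Id$ can be returned to in positive time rather than only approximately. This is the point where Krener's theorem is genuinely needed. Step 1 depends on compactness in an essential way, because for noncompact groups the closure of $\mathcal{A}$ need not be a group. For the system \eqref{pr1}--\eqref{pr2} itself, once this theorem is available one only has to check the bracket condition, which follows directly from the commutation relations $[X_1,X_2]=X_3$ and so on.
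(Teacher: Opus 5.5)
The paper gives no proof of this theorem; it imports it as Theorem 6.2 of \cite{lie_control}, the Jurdjevic--Sussmann theorem. Your self-contained argument is correct and follows the classical scheme behind that citation:
\begin{itemize}
\item the attainable set $\mathcal{A}$ is a subsemigroup with nonempty interior;
\item compactness makes $\overline{\mathcal{A}}$ a group (your recurrence argument for $g^{-1}$ is fine; an exponent $0$ is harmless since $\Id\in\mathcal{A}$);
\item connectedness gives $\overline{\mathcal{A}}=G$;
\item a final step upgrades density to equality.
\end{itemize}
Textbook versions often use compactness differently. They apply recurrence (Poisson stability with respect to the finite Haar measure) to each one-parameter subgroup $e^{tX}$, $X\in\Gamma$, so that negative times become approximately available. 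Your lemma that a closed subsemigroup of a compact group is a group is the same recurrence mechanism, applied to arbitrary elements of $\overline{\mathcal{A}}$, and is arguably cleaner.

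Your Step 3 is valid but heavier than necessary, and the ``main obstacle'' you anticipate is not really there. You need neither density of $\operatorname{int}\mathcal{A}$, nor the time-reversed system, nor $\bigcup_n V^n=G$. Take any $g\in G$. The set $g\,(\operatorname{int}\mathcal{A})^{-1}$ is open and nonempty by Step 1, so it meets the dense set $\mathcal{A}$. This gives $b\in\operatorname{int}\mathcal{A}$ and $a\in\mathcal{A}$ with $a=gb^{-1}$, hence $g=ab\in\mathcal{A}\mathcal{A}\subset\mathcal{A}$. So Krener's theorem is needed only in its weak form (nonempty interior), which you already used in Step 1. Likewise, ``reached in positive time'' is a red herring: $\Id\in\mathcal{A}$ trivially, and what your version actually uses is $\Id\in\operatorname{int}\mathcal{A}$. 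If you keep your version, note that $\bigcup_n V^n=G$ requires first shrinking $V$ to a symmetric neighbourhood.

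Finally, for the system \eqref{pr1}--\eqref{pr2} no brackets are needed. The admissible cone already contains $X_1$, $X_1+X_2$, $X_1+X_3$, which span $L$.
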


This implies the complete controllability of the system \eqref{pr1}, \eqref{pr2}.

\section{Absence of Global Optimality}

\begin{proposition}
\label{prop_periodic_traj}
For any $q_0 \in \SU(2)$, there exists a timelike periodic trajectory passing through $q_0$.
\end{proposition}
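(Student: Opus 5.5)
The plan is to use the one-parameter subgroup generated by the timelike field $X_1$ and then transport it to $q_0$ by left translation. Take the constant control $u = (1,0,0)$. It satisfies \eqref{pr2} strictly, since $u_1 = 1 > 0 = \sqrt{u_2^2+u_3^2}$, so the corresponding trajectory is timelike. Its Lorentzian length density is $\sqrt{u_1^2-u_2^2-u_3^2} = 1 > 0$.

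Because $X_1$ is left-invariant, the solution of \eqref{pr1} with this control and initial condition $q(0)=q_0$ is
$$
q(t) = q_0 \exp\left(-\frac{it}{2}\s_1\right).
$$
So everything reduces to computing this matrix exponential explicitly. Since $\s_1^2 = \Id$, the exponential series splits into its even and odd parts, giving
$$
\exp\left(-\frac{it}{2}\s_1\right) = \cos\frac{t}{2}\,\Id - i\sin\frac{t}{2}\,\s_1 .
$$
This is $4\pi$-periodic in $t$, and it equals $\Id$ exactly when $t \in 4\pi\mathbb{Z}$. Therefore $q(t+4\pi) = q(t)$ for all $t$, and $q(4\pi) = q_0$.

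This gives a closed timelike curve through $q_0$, of Lorentzian length $4\pi$, which is the periodic trajectory required by the statement. The only point needing any care is the identification of the trajectory with a left translate of a one-parameter subgroup, and that follows directly from left-invariance of $X_1$. I expect no real obstacle.

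The intended use, presumably in the following argument, is that concatenating this loop $k$ times with any admissible trajectory from $q_0$ to $q_1$ yields lengths exceeding any bound. Hence the supremum in \eqref{pr4} is infinite and no global optimum exists. Controllability, via the theorem above, guarantees that some admissible trajectory to $q_1$ exists to which the loops can be attached.
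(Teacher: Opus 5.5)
Your proof is correct and follows the same route as the paper: the constant control $u=(1,0,0)$, the left translate $q_0\exp\left(-\frac{it}{2}\s_1\right)$ of the one-parameter subgroup generated by $X_1$, and evaluation of the exponential via $\s_1^2=\Id$. Your computation is in fact the accurate one. The paper writes $q(t)=\left(-\frac{i}{2}\right)q_0e^{\s_1 t}$ and identifies the resulting hyperbolic series with $\cos t$, $\sin t$, whereas the correct flow $q_0\bigl(\cos\frac{t}{2}\,\Id-i\sin\frac{t}{2}\,\s_1\bigr)$ has period $4\pi$, not the $2\pi$ used in the subsequent corollary; this does not affect the conclusion.
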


\begin{proof}
Consider the system with $u_1 \equiv 1$, $u_2 \equiv 0$, $u_3 \equiv 0$. Let us find the flow of the field $X_1(q)$ and show that it is given by periodic functions. This will be the desired trajectory. Indeed,

    \begin{align*}
    & \sigma_1 = \begin{pmatrix}
    0 & 1 \\
    1 & 0
    \end{pmatrix},\ \sigma_1^2 = \begin{pmatrix}
    0 & 1\\
    1 & 0
    \end{pmatrix}\begin{pmatrix}
    0 & 1\\
    1 & 0
    \end{pmatrix} = \begin{pmatrix}
    1 & 0\\
    0 & 1
    \end{pmatrix} \Rightarrow \sigma_1^{2k-1} = \begin{pmatrix}
    0 & 1\\
    1 & 0
    \end{pmatrix},\ \sigma_1^{2k} = \begin{pmatrix}
    1 & 0\\
    0 & 1
    \end{pmatrix},\ k \in \mathbb{N}.
    \end{align*}

    \begin{align*}
    & q(t) = \left(-\frac{i}{2}\right)q_0e^{\sigma_{1}t} = \left( -\frac{i}{2} \right)q_0\left( E + t\sigma_1 + \frac{t^2}{2}\sigma_1^2 + ... + \frac{t^n}{n!}\sigma_1^n + ... \right) = \\
    & = \left( -\frac{i}{2} \right)q_0\left( E + t\begin{pmatrix}
    0 & 1\\
    1 & 0
    \end{pmatrix} + \frac{t^2}{2}\begin{pmatrix}
    1 & 0\\
    0 & 1
    \end{pmatrix} + ... + \frac{t^{2k-1}}{(2k-1)!}\begin{pmatrix}
    0 & 1\\
    1 & 0
    \end{pmatrix} + \frac{t^{2k}}{(2k)!}\begin{pmatrix}
    1 & 0\\
    0 & 1
    \end{pmatrix} + ... \right) = \\
    & = \left( -\frac{i}{2} \right)q_0\begin{pmatrix}
    1 + \frac{t^2}{2} + ... + \frac{t^{2k}}{(2k)!} + ... & t + \frac{t^3}{3!} + ... + \frac{t^{2k-1}}{(2k-1)!} + ... \\
    t + \frac{t^3}{3!} + ... + \frac{t^{2k-1}}{(2k-1)!} + ... & 1 + \frac{t^2}{2} + ... + \frac{t^{2k}}{(2k)!} + ...
    \end{pmatrix} = \left( -\frac{i}{2} \right)q_0\begin{pmatrix}
    \cos{t} & \sin{t}\\
    \sin{t} & \cos{t}
    \end{pmatrix}.
    \end{align*}
\end{proof}

\begin{corollary}
For any $q_0,\ q_1 \in \SU(2)$, there is no Lorentzian longest curve.
\end{corollary}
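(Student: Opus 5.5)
The idea is to combine complete controllability (Section 2) with Proposition~\ref{prop_periodic_traj} and show that admissible trajectories from $q_0$ to $q_1$ can have arbitrarily large Lorentzian length. Then the supremum of the functional \eqref{pr4} over admissible trajectories joining $q_0$ and $q_1$ is $+\infty$, and no longest curve can exist.

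First, by Proposition~\ref{prop_periodic_traj} there is a timelike periodic trajectory $\gamma$ through $q_0$. The simplest choice is the one-parameter subgroup $q_0 e^{tA}$ with $A = -\frac{i}{2}\s_1$, generated by the control $u\equiv(1,0,0)$. Its period $T>0$ is finite; we can take $T=4\pi$, since $e^{-\frac{i}{2}\s_1 t} = \cos\frac t2\,\Id - i\sin\frac t2\,\s_1$. Over one period the Lorentzian length is $\int_0^T\sqrt{1-0-0}\,dt = T>0$.

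Second, by the controllability theorem, system \eqref{pr1}, \eqref{pr2} is completely controllable, so some admissible trajectory $\eta$ steers $q_0$ to $q_1$. Its length is some finite value $L_0\ge 0$. For each $n\in\mathbb N$, concatenate $n$ loops of $\gamma$ with $\eta$. Since the admissible control set is the fixed cone $u_1\ge\sqrt{u_2^2+u_3^2}$, concatenating controls gives an admissible (measurable, bounded) control. The resulting trajectory goes from $q_0$ to $q_1$, and its length is $nT + L_0$, because the functional is additive under concatenation. Letting $n\to\infty$, the lengths are unbounded.

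Finally, suppose a longest curve existed with length $L^*<\infty$. Choosing $n$ with $nT+L_0>L^*$ gives a contradiction. If one allows $L^*=+\infty$, it is not attained by any curve defined on a finite interval with bounded controls, so in either case no longest curve exists.

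The only real subtlety is the precise notion of an admissible curve. If the paper allows arbitrary (not necessarily timelike) causal controls, the argument above suffices. If controllability must be realized within the class of future-directed curves, note that Theorem 1 applies to the whole cone of controls. The concatenation step is then routine, and the main point is simply that the constant-control loop has strictly positive length.
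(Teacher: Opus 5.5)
Your proof is correct and follows the same route as the paper: concatenate $k$ loops of the periodic $X_1$-trajectory through $q_0$ with a fixed admissible curve from $q_0$ to $q_1$, so the lengths $kT+L_0$ are unbounded and no longest curve exists. Your period $T=4\pi$ is the correct one (the paper's $2\pi$ is a slip that does not affect the argument), and your appeal to complete controllability to obtain the connecting curve justifies a step that the paper only assumes.
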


\begin{proof}
Denote the periodic trajectory obtained in Proposition \ref{prop_periodic_traj} as $\gamma_0$. Its length is positive and equals the period $2\pi$. $l(\gamma_0) = 2\pi$.

Assume there exists a timelike trajectory $\gamma_1$ connecting $q_0$ with $q_1$. Its length $0 < l(\gamma_1) < +\infty$.

Construct a sequence of timelike trajectories $\omega_k = k\gamma_0 + \gamma_1$, $k = 0,1,2, ...$, connecting $q_0$ with $q_1$. They are structured as follows: traverse the periodic trajectory $\gamma_0$ $k$ times and $\gamma_1$ once. The length $l(\omega_k) = 2k\pi + l(\gamma_1)$.

The limit of the sequence of lengths is infinity, therefore an optimal trajectory does not exist.
\end{proof}

\section{Investigation Using the Pontryagin Maximum Principle}

\subsection{Normal Extremals}

Naturally parameterized normal extremals are trajectories of the Hamiltonian system with the Hamiltonian $H = \frac 12 (-h_1^2 + h_2^2+h_3^2)$, belonging to the level surface $\{H = - \frac 12\}$.
In this case, the extremal controls are equal to $u_1 = - h_1$, $u_2 = h_2$, $u_3 = h_3$. Here $h_i(\lambda) = \langle \lambda, X_i\rangle$.

The system for the vertical variables is written as follows:

\begin{equation}
\label{vert_obsh}
\begin{cases}
\dot{h}_1 = \{H,h_1 \} = \frac{1}{2}\{-h_1^2 + h_2^2 + h_3^2,h_1\} = h_2\{h_2,h_1\} + h_3\{h_3,h_1\} = -h_2h_3 + h_3h_2 = 0,\\
\dot{h}_2 = \{H,h_2\} = \frac{1}{2}\{-h_1^2 + h_2^2 + h_3^2,h_2\} = -h_1\{h_1,h_2\} + h_3\{h_3,h_2\} = -h_1h_3 - h_3h_1 = -2h_1h_3,\\
\dot{h}_3 = \{H,h_3\} = \frac{1}{2}\{-h_1^2 + h_2^2 + h_3^2,h_3\} = -h_1\{h_1,h_3\} + h_2\{h_2,h_3\} = h_1h_2 + h_1h_2 = 2h_1h_2,
\end{cases}
\end{equation}
since $[X_1,X_2] = X_3$, $[X_3,X_1] = X_2$, $[X_2,X_3] = X_1$.

We introduce a parameterization on the lower sheet of the two-sheeted hyperboloid:

\begin{align*}
& -h_1^2 + h_2^2 + h_3^2 = -1, \quad h_1 < 0,\\
& h_1 = -\ch{\theta},\ h_2 = \cos{\varphi}\sh{\theta},\ h_3 = \sin{\varphi}\sh{\theta}.
\end{align*}

Then

\begin{equation}
\label{vert_norm}
\begin{cases}
\dot{h}_1 = -\dot{\theta}\sh{\theta},\\
\dot{h}_2 = -\dot{\varphi}\sin{\varphi}\sh{\theta} + \dot{\theta}\cos{\varphi}\ch{\theta},\\ 
\dot{h}_3 = \dot{\varphi}\cos{\varphi}\sh{\theta} + \dot{\theta}\sin{\varphi}\ch{\theta},
\end{cases}
\end{equation}

\begin{equation}
\label{norm_comb}
2h_1h_3 = -\sin{\varphi}\sh{2\theta}, \quad 2h_1h_2 = -\cos{\varphi}\sh{2\theta}.
\end{equation}

Using (\ref{vert_obsh}), (\ref{vert_norm}) and (\ref{norm_comb}), we obtain:

\begin{equation*}
\begin{cases}
\dot{h}_1 = \dot{\theta}\sh{\theta} = 0,\\
\dot{h}_2 = -\dot{\varphi}\sin{\varphi}\sh{\theta} + \dot{\theta}\cos{\varphi}\ch{\theta} = \sin{\varphi}\sh{2\theta},\\ 
\dot{h}_3 = \dot{\varphi}\cos{\varphi}\sh{\theta} + \dot{\theta}\sin{\varphi}\ch{\theta} = -\cos{\varphi}\sh{2\theta}.
\end{cases}
\end{equation*}

From the first equation we get $\dot{\theta} = 0$, i.e., $\theta \equiv \theta_0$. From the second and third:

\begin{equation*}
\dot{\varphi} = -2\ch{\theta_0} \Leftrightarrow \varphi = -2t\ch{\theta_0} + \varphi_0.
\end{equation*}

Let us write out the horizontal part of the system:

$$
\dot{q} = -\frac{i}{2} q\left( -h_1\sigma_1 + h_2\sigma_2 + h_3\sigma_3 \right) \Leftrightarrow \begin{cases}
\dot{z}_1 = -\frac{i}{2}\left( z_1h_3 + z_2(-h_1 + ih_2) \right),\\
\dot{z}_2 = -\frac{i}{2}\left( -z_1(h_1 + ih_2) - z_2h_3 \right).
\end{cases}
$$

\begin{equation*}
\begin{cases}
\dot{x}_1 = 1/2\sh{\theta_0}\sin{(-2t\ch{\theta_0} + \varphi_0)}y_1 + 1/2\sh{\theta_0}\cos{(-2t\ch{\theta_0} + \varphi_0)}x_2 + 1/2\ch{\theta_0}y_2,\\
\dot{y}_1 = -1/2\sh{\theta_0}\sin{(-2t\ch{\theta_0} + \varphi_0)}x_1-1/2\ch{\theta_0}x_2 + 1/2\sh{\theta_0}\cos{(-2t\ch{\theta_0} + \varphi_0)}y_2,\\
\dot{x}_2 = 1/2\ch{\theta_0}y_1 - 1/2\sh{\theta_0}\cos{(-2t\ch{\theta_0} + \varphi_0)}x_1 - 1/2\sh{\theta_0}\sin{(-2t\ch{\theta_0} + \varphi_0)}y_2 ,\\
\dot{y}_2 = -1/2\ch{\theta_0}x_1 - 1/2\sh{\theta_0}\cos{(-2t\ch{\theta_0} + \varphi_0)}y_1 + 1/2\sh{\theta_0}\sin{(-2t\ch{\theta_0} + \varphi_0)}x_2.
\end{cases}
\end{equation*}

\begin{theorem}
\label{norm_traj_form}
Normal extremals with initial conditions $(\theta_0,\varphi_0,x_{10},y_{10},x_{20},y_{20})$ are given by the following formulas:
\begin{equation}
\label{norm_traj}
    \begin{cases}
    x_1(t) = x_{10}(\alpha_1 + \frac{h_1}{m}\alpha_2) + y_{10}(\frac{a}{m}\alpha_3 + \frac{c}{m}\alpha_2) + x_{20}(\frac{c}{m}\alpha_3 - \frac{a}{m}\alpha_2) + y_{20}(\frac{h_1}{m}\alpha_3 - \alpha_4),\\
    y_1(t) = -x_{10}(\frac{a}{m}\alpha_3 + \frac{c}{m}\alpha_2) + y_{10}(\alpha_1 + \frac{h_1}{m}\alpha_2) + x_{20}(-\frac{h_1}{m}\alpha_3 + \alpha_4) + y_{20}(\frac{c}{m}\alpha_3 - \frac{a}{m}\alpha_2),\\
    x_2(t) = x_{10}( \frac{a}{m}\alpha_2 - \frac{c}{m}\alpha_3 ) + y_{10}( -\alpha_4 + \frac{h_1}{m}\alpha_3 ) + x_{20}( \frac{h_1}{m}\alpha_2 + \alpha_1 ) + y_{20}( -\frac{c}{m}\alpha_2 - \frac{a}{m}\alpha_3 ),\\
    y_2(t) = x_{10}( \alpha_4 - \frac{h_1}{m}\alpha_3 ) + y_{10}( \frac{a}{m}\alpha_2 - \frac{c}{m}\alpha_3 ) + x_{20}(\frac{c}{m}\alpha_2 + \frac{a}{m}\alpha_3 )+ y_{20}( \frac{h_1}{m}\alpha_2 + \alpha_1 ),
    \end{cases}
    \end{equation}
where
\begin{align*}
    & \alpha_1 = \cos{(h_1t)} \cos{\left( \frac{t}{2}m \right)}, \ \alpha_2 = \sin{(h_1t)} \sin{\left( \frac{t}{2}m \right)}, \ \alpha_3 = \cos{(h_1t)}\sin{\left( \frac{t}{2}m \right)},\ \alpha_4 = \sin{\left( h_1 t \right)}\cos{\left( \frac{t}{2}m \right)},\\
    & h_1 = -\ch{\theta_0}, \quad  a = \sin{\varphi_0}\sh{\theta_0}, \quad c = \cos{\varphi_0}\sh{\theta_0}, \quad m = \sqrt{\ch{(2\theta_0)}}.
    \end{align*}
\end{theorem}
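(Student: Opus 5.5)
The plan is to solve the horizontal equation $\dot q = q A(t)$ in closed form. Here
$$A(t) = -\tfrac{i}{2}\bigl(-h_1\s_1 + \sh\theta_0(\cos\varphi(t)\,\s_2 + \sin\varphi(t)\,\s_3)\bigr),$$
with $h_1=-\ch\theta_0$ constant and $\varphi(t)=\varphi_0-2t\ch\theta_0=\varphi_0+2h_1t$, as found above for the vertical subsystem. The key observation is that the time dependence of $A(t)$ is a uniform rotation in the $(\s_2,\s_3)$-plane about the $\s_1$-axis. Such a rotation is realized by conjugation with $e^{s\,i\s_1}$, because $\ad(i\s_1)$ maps $\s_2\mapsto -2\s_3$ and $\s_3\mapsto 2\s_2$. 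So I will look for a constant $B=\beta\,i\s_1\in\su(2)$ with
$$A(t)=e^{tB}A_0e^{-tB},\qquad A_0=A(0).$$
I will fix $\beta$, including its sign, by differentiating at $t=0$ and comparing with $\dot\varphi=2h_1$. Since $\ad(i\s_1)$ rotates at double speed, I expect $|\beta|=|h_1|/2$, that is, $B=\pm\frac{i}{2}h_1\s_1$.

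Next I use the standard identity for such equations. If $A(t)=e^{tB}A_0e^{-tB}$, then
$$q(t)=q_0\,e^{t(A_0+B)}e^{-tB}.$$
Differentiating gives $\dot q = q_0e^{t(A_0+B)}(A_0+B-B)e^{-tB}=q\,e^{tB}A_0e^{-tB}$, which confirms the identity. The constant matrix $A_0+B$ equals $-\frac{i}{2}(\nu_1\s_1+\nu_2\s_2+\nu_3\s_3)$, where $\nu_2=c=\cos\varphi_0\sh\theta_0$ and $\nu_3=a=\sin\varphi_0\sh\theta_0$. The $\s_1$-coefficient becomes $\pm h_1$ after the shift by $B$. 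In either case
$$|\nu|=\sqrt{\ch^2\theta_0+\sh^2\theta_0}=\sqrt{\ch 2\theta_0}=m,$$
which explains the frequency $m/2$ in the statement.

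Both exponentials are then explicit, using $(\nu\cdot\s)^2=|\nu|^2\Id$:
$$e^{t(A_0+B)}=\cos\tfrac{mt}{2}\,\Id - i\sin\tfrac{mt}{2}\,\tfrac{\nu\cdot\s}{m},$$
and $e^{-tB}$ is $\cos(\cdot)\Id+ i\sin(\cdot)\s_1$, with argument a multiple of $h_1t$. Multiplying these two factors produces exactly the four products $\alpha_1,\dots,\alpha_4$ of $\cos/\sin(h_1t)$ with $\cos/\sin(mt/2)$. The coefficients $h_1/m$, $a/m$, $c/m$ come from the normalized vector $\nu/m$.

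Finally I left-multiply by $q_0$, written via $z_1=x_1+iy_1$ and $z_2=x_2+iy_2$, and read off the first row of $q(t)$ to get the four real formulas. As a check, I verify at $t=0$ that the right-hand side reduces to $(x_{10},y_{10},x_{20},y_{20})$, and I compare the derivatives at $t=0$ with the real system written above.

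The main obstacle is bookkeeping rather than ideas. I need the correct sign and factor of $\beta$, in particular whether the argument is $h_1t$ or $h_1t/2$; this depends on $\ad(i\s_1)$ rotating at twice the speed. I also need consistent signs when converting the complex product into the real coordinates. I will pin these down by matching the derivatives at $t=0$ against the given real ODE system, which involves only a handful of terms.
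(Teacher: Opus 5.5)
Your approach is the paper's own. The paper takes $f=2h_1X_1$ and $g=V_0$, checks $\frac{d}{dt}V_t=[f,V_t]$, and writes $q(t)=e^{-tf}\circ e^{t(f+g)}(q_0)$. In matrix form this is exactly your $q_0e^{t(A_0+B)}e^{-tB}$ with $B$ the matrix of $f$, followed by the same two closed-form exponentials and the same read-off of the first row.

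One correction to your preview, which the matching step you plan would catch. $\mathrm{ad}(i\sigma_1)$ rotates the $(\sigma_2,\sigma_3)$-plane at rate $2$, and $|\dot\varphi|=2|h_1|$, so $|\beta|=|h_1|$, not $|h_1|/2$. More precisely $-2\beta=\dot\varphi=2h_1$, which gives $B=-ih_1\sigma_1$; only this sign works. With your guess $B=\pm\frac{i}{2}h_1\sigma_1$, the $\sigma_1$-coefficient of $A_0+B$ would be $0$ or $-2h_1$, not $\pm h_1$, so $|\nu|\neq m$. With $\beta=-h_1$ you get $\nu=(h_1,c,a)$, $|\nu|=m$, and $e^{-tB}=\cos(h_1t)\,\mathrm{Id}+i\sin(h_1t)\,\sigma_1$. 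This produces exactly $\alpha_1,\dots,\alpha_4$ and the stated formulas.
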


\begin{proof}
We use the following method for solving a non-autonomous system of differential equations from \cite{notes}: \begin{enumerate}
\item[(a)]
Write the horizontal subsystem of the Hamiltonian system as $\dot q = V_t(q) := - h_1 X_1 + h_2 X_2 + h_3 X_3$
\item[(b)]
Prove that $\frac{d}{dt} V_t = \ad f (V_t)$ for some field $f$.
\item[(c)]
Conclude from this that $V_t = e^{t \ad f} g$ for some field $g$.
\item[(d)]
Use formula (19.6) from \cite{notes}
$$
q(t) = \stackrel{\longrightarrow}{\exp} \int_0^t V_{\tau} dt (q_0) 
= \stackrel{\longrightarrow}{\exp} \int_0^t e^{\tau \ad f} g dt (q_0)
= e^{-tf} \circ e^{t(f+g)} (q_0).
$$
\end{enumerate}

\begin{itemize}
    \item[$(a)$] 
    \begin{align*}
    & \dot{q} = -h_1X_1 + h_2X_2 + h_3X_3 = -\frac{i}{2} q\left( -h_1\sigma_1 + h_2\sigma_2 + h_3\sigma_3 \right) = \\
    & = -\frac{i}{2} q\left( \ch{\theta_0}\sigma_1 + \cos{(-2t\ch{\theta_0} + \varphi_0)}\sh{\theta_0}\sigma_2 + \sin{(-2t\ch{\theta_0}+\varphi_0)}\sh_{\theta_0}\sigma_3 \right) = V_t(q).
    \end{align*}

    \item[$(b)$] 

    \begin{align*}
    & h_1 = -\ch{\theta_0},\quad \dot{h}_1 = 0,\\
    & h_2 = \cos{(-2t\ch{\theta_0}+\varphi_0)}\sh{\theta_0},\quad \dot{h}_2 = 2\ch{\theta_0}\sin{2t\ch{(\theta_0+\varphi_0)}}\sh{\theta_0} = -2h_1h_3,\\
    & h_3 = \sin{(-2t\ch{\theta_0}+\varphi_0)}\sh{\theta_0},\quad \dot{h}_3 = (-2\ch{\theta_0})\cos{(-2t\ch_{\theta_0}+\varphi_0)}\sh{\theta_0} = 2h_1h_2.
    \end{align*}

    \begin{equation*}
    \frac{d}{dt}V_t = -\frac{i}{2} q\left( -2h_1h_3\sigma_2 + 2h_1h_2\sigma_3 \right) = - 2h_1h_3X_2 + 2h_1h_2X_3.
    \end{equation*}

    Take $f = 2h_1X_1 = -2\ch{\theta_0}X_1$ and obtain the required relation:
    \begin{equation*}
    \frac{d}{dt}V_t = -2h_1h_3X_2 + 2h_1h_2X_3 = [f,V_t].
    \end{equation*}

    \item[$(c)$]

    We need to find $g$ such that $V_t = e^{t\ad{f}}g$.

    Substitute $t = 0$, then $g = V_0$:

    \begin{equation*}
    g = V_0 = -h_1\vert_{t=0}X_1 + h_2\vert_{t=0}X_2 + h_3\vert_{t=0}X_3 = \ch{\theta_0}X_1 + \cos{\varphi_0}\sh{\theta_0}X_2 + \sin{\varphi_0}\sh{\theta_0}X_3 = -h_1X_1 + cX_2 + aX_3.
    \end{equation*}

    \item[$(d)$]

    $$
    q(t) = \stackrel{\longrightarrow}{\exp} \int_0^t V_{\tau} dt (q_0) 
    = \stackrel{\longrightarrow}{\exp} \int_0^t e^{\tau \ad f} g dt (q_0)
    = e^{-tf} \circ e^{t(f+g)} (q_0).
    $$

    Let us compute the flows of the fields $-f$ and $f+g$, and then their composition.
    
    First, $-f$:

    $$
    \dot{q} = -f = -2h_1X_1 = ih_1 q\sigma_1 \Rightarrow q(t) = q_0e^{ih_1t\sigma_1}.
    $$

    Then

     \begin{align*}
    & q(t) = q_0\left( E + ih_1t\sigma_1 - \frac{(h_1t)^2}{2}\sigma_1^2 - i\frac{(h_1t)^3}{3!}\sigma_1^3 + \frac{(h_1t)^4}{4!}\sigma_1^4 + ... \right) = \\
    & = q_0\left( E + ih_1t\sigma_1 - \frac{(h_1t)^2}{2}E - i\frac{(h_1t)^3}{3!}\sigma_1 + \frac{(h_1t)^4}{4!}E + ... \right) = \\
    & = q_0\begin{pmatrix}
    1 - \frac{(h_1t)^2}{2} + \frac{(h_1t)^4}{4!} + ... & i\left( h_1t - \frac{(h_1t)^3}{3!} + \frac{(h_1t)^5}{5!} + ... \right) \\
    i\left( h_1t - \frac{(h_1t)^3}{3!} + \frac{(h_1t)^5}{5!} + ... \right) & 1 - \frac{(h_1t)^2}{2} + \frac{(h_1t)^4}{4!} + ...
    \end{pmatrix} = q_0 \begin{pmatrix}
    \cos{(h_1t)} & i\sin{(h_1t)} \\
    i\sin{(h_1t)} & \cos{(h_1t)}
    \end{pmatrix},
     \end{align*}

     since

    \begin{align*}
    & \sigma_1 = \begin{pmatrix}
    0 & 1 \\
    1 & 0
    \end{pmatrix},\ \sigma_1^2 = \begin{pmatrix}
    0 & 1\\
    1 & 0
    \end{pmatrix}\begin{pmatrix}
    0 & 1\\
    1 & 0
    \end{pmatrix} = \begin{pmatrix}
    1 & 0\\
    0 & 1
    \end{pmatrix} \Rightarrow \sigma_1^{2k-1} = \begin{pmatrix}
    0 & 1\\
    1 & 0
    \end{pmatrix},\ \sigma_1^{2k} = \begin{pmatrix}
    1 & 0\\
    0 & 1
    \end{pmatrix},\ k \in \mathbb{N}.
    \end{align*}

    Now $f+g$:

    \begin{align*}
    & \dot{q} = f + g = -2\ch{\theta_0}X_1 + \ch{\theta_0}X_1 + \cos{\varphi_0}\sh{\theta_0}X_2 + \sin{\varphi_0}\sh{\theta_0}X_3 = -\ch{\theta_0}X_1 + \cos{\varphi_0}\sh{\theta_0}X_2 + \sin{\varphi_0}\sh{\theta_0}X_3 = \\
    & = \left( -\frac{i}{2} \right)q\left( -\ch{\theta_0}\begin{pmatrix}
    0 & 1\\
    1 & 0
    \end{pmatrix} + \cos{\varphi_0}\sh{\theta_0}\begin{pmatrix}
    0 & -i\\
    i & 0
    \end{pmatrix} + \sin{\varphi_0}\sh{\theta_0}\begin{pmatrix}
    1 & 0\\
    0 & -1
    \end{pmatrix} \right) = \\
    & = \left( -\frac{i}{2} \right)q\begin{pmatrix}
    \sin{\varphi_0}\sh{\theta_0} & -\ch{\theta_0} -i\cos{\varphi_0}\sh{\theta_0} \\
    -\ch{\theta_0}+ i\cos{\varphi_0}\sh{\theta_0} & -\sin{\varphi_0}\sh{\theta_0}
    \end{pmatrix} = \left( -\frac{i}{2} \right)qA.
    \end{align*}

    Then

    \begin{align*}
    & q(t) = q_0\left( E - t\frac{i}{2}A + \frac{t^2}{2}\left( -\frac{i}{2} \right)^2 A^2 + \frac{t^3}{3!}\left( -\frac{i}{2} \right)^3A^3 + ... + \frac{t^{2k}}{(2k)!}\left( -\frac{i}{2} \right)^{2k}A^{2k} + \frac{t^{2k+1}}{(2k+1)!}\left( -\frac{i}{2} \right)^{2k+1}A^{2k+1} + ... \right) = \\
    & = q_0\left(  E - t\frac{i}{2}A + \frac{t^2}{2}\left( -\frac{i}{2} \right)^2 \ch{(2\theta_0)}E  + ... + \frac{t^{2k}}{(2k)!}\left( -\frac{i}{2} \right)^{2k} \ch^{k}{(2\theta_0)}E + \frac{t^{2k+1}}{(2k+1)!}\left( -\frac{i}{2} \right)^{2k+1} \ch^k{(2\theta_0)} A + ... \right) = \\
    & = q_0 \begin{pmatrix}
   \cos{\left( \frac{t}{2}\sqrt{\ch{2\theta_0}} \right)} -i\frac{a}{\sqrt{\ch{2\theta_0}}}\sin{\left( \frac{t}{2}\sqrt{\ch{2\theta_0}}  \right)} & -\frac{c + ih_1}{\sqrt{\ch{2\theta_0}}}\sin{\left( \frac{t}{2}\sqrt{\ch{2\theta_0}} \right)} \\
    -\frac{-c + ih_1}{\sqrt{\ch{2\theta_0}}}\sin{\left( \frac{t}{2}\sqrt{\ch{2\theta_0}} \right)} & \cos{\left( \frac{t}{2}\sqrt{\ch{2\theta_0}} \right)} + i\frac{a}{\sqrt{\ch{2\theta_0}}}\sin{\left( \frac{t}{2}\sqrt{\ch{2\theta_0}}  \right)}
    \end{pmatrix},
    \end{align*}

    since
    \begin{align*}
    & A = \begin{pmatrix}
    \sin{\varphi_0}\sh{\theta_0} & -\ch{\theta_0} -i\cos{\varphi_0}\sh{\theta_0} \\
    -\ch{\theta_0}+ i\cos{\varphi_0}\sh{\theta_0} & -\sin{\varphi_0}\sh{\theta_0}
    \end{pmatrix},\\
    & A^2 = \begin{pmatrix}
    \sin{\varphi_0}\sh{\theta_0} & -\ch{\theta_0} -i\cos{\varphi_0}\sh{\theta_0} \\
    -\ch{\theta_0}+ i\cos{\varphi_0}\sh{\theta_0} & -\sin{\varphi_0}\sh{\theta_0}
    \end{pmatrix}^2 =  \begin{pmatrix}
    \ch{2\theta_0} & 0 \\
    0 & \ch{2\theta_0}
    \end{pmatrix},\\
    & A^3 = \ch{2\theta_0}A,\ A^4 = \ch{2\theta_0}A^2 = \ch^2{2\theta_0}E,\ A^5 = A^4A = \ch^2{2\theta_0}A,... \\
    & A^{2k} = \ch^k{(2\theta_0)}E,\ A^{2k+1} = \ch^k{(2\theta_0)}A,\ k = 0,1,2,3,....
    \end{align*}

    Composition:

    \begin{align*}
    & e^{-tf}\circ e^{t(f+g)}(q_0) = \\
    & = q_0 \begin{pmatrix}
   \cos{\left( \frac{t}{2}\sqrt{\ch{2\theta_0}} \right)} -i\frac{a}{\sqrt{\ch{2\theta_0}}}\sin{\left( \frac{t}{2}\sqrt{\ch{2\theta_0}}  \right)} & -\frac{c + ih_1}{\sqrt{\ch{2\theta_0}}}\sin{\left( \frac{t}{2}\sqrt{\ch{2\theta_0}} \right)} \\
    -\frac{-c + ih_1}{\sqrt{\ch{2\theta_0}}}\sin{\left( \frac{t}{2}\sqrt{\ch{2\theta_0}} \right)} & \cos{\left( \frac{t}{2}\sqrt{\ch{2\theta_0}} \right)} + i\frac{a}{\sqrt{\ch{2\theta_0}}}\sin{\left( \frac{t}{2}\sqrt{\ch{2\theta_0}}  \right)}
    \end{pmatrix} \begin{pmatrix}
    \cos{(h_1t)} & i\sin{(h_1t)} \\
    i\sin{(h_1t)} & \cos{(h_1t)}
    \end{pmatrix}
     = \\
    & = q_0 \begin{pmatrix}
   \cos{\left( \frac{t}{2}m \right)} -i\frac{a}{m}\sin{\left( \frac{t}{2}m  \right)} & -\frac{c + ih_1}{m}\sin{\left( \frac{t}{2}m \right)} \\
    -\frac{-c + ih_1}{m}\sin{\left( \frac{t}{2}m \right)} & \cos{\left( \frac{t}{2}m \right)} + i\frac{a}{m}\sin{\left( \frac{t}{2}m  \right)}
    \end{pmatrix} \begin{pmatrix}
    \cos{(h_1t)} & i\sin{(h_1t)} \\
    i\sin{(h_1t)} & \cos{(h_1t)}
    \end{pmatrix}
     = q_0\begin{pmatrix}
    b_{11} & b_{12} \\
    b_{21} & b_{22}
    \end{pmatrix} = \\
    & = \begin{pmatrix}
    z_{10}b_{11} + z_{20}b_{21} & z_{10}b_{12} + z_{20}b_{22} \\
    -\bar{z}_{20}b_{11} + \bar{z}_1b_{21} & -\bar{z}_{20}b_{12} + \bar{z}_1b_{22}
    \end{pmatrix} = \begin{pmatrix}
    (x_{10}+iy_{10})b_{11} + (x_{20}+iy_{20})b_{21} & (x_{10}+iy_{10})b_{12} + (x_{20} + iy_{20})b_{22} \\
    -(x_{20}-iy_{20})b_{11} + (x_{10} - iy_{10})b_{21} & -(x_{20}-iy_{20})b_{12} + (x_{10} - iy_{10})b_{22}
    \end{pmatrix},
    \end{align*}

 where
    \begin{align*}
    & b_{11} = \cos{\left(h_1t\right)} \cos{\left( \frac{t}{2}m \right)} - i \frac{a}{m}\sin{\left( \frac{t}{2}m \right)}\cos{\left(h_1 t\right)} -i \frac{c+ih_1}{m}\sin{\left(\frac{t}{2}m \right)}\sin{\left(h_1 t \right)} = \\
    & = f\left(h_1t\right)f\left( \frac{t}{2}m \right) + \frac{h_1}{m}g\left( \frac{t}{2}m \right)g\left( \frac{t}{2}m \right) - i\frac{a}{m}g\left( \frac{t}{2}m \right)f\left(h_1t\right) - i\frac{c}{m}g\left( \frac{t}{2}m \right)g\left( \frac{t}{2}m \right) = \alpha_1 + \frac{h_1}{m}\alpha_2 - i\frac{a}{m}\alpha_3 - i\frac{c}{m}\alpha_2,\\
    & b_{12} = i\cos{\left(\frac{t}{2}m\right)}\sin{\left(h_1 t\right)} + \frac{a}{m}\sin{\left( \frac{t}{2}m \right)}\sin{\left( h_1 t \right)} - \frac{c + ih_1}{m}\sin{\left(\frac{t}{2}m\right)}\cos{\left(h_1 t\right)} = \\
    & = \frac{a}{m}g\left( h_1 t \right)g\left( \frac{t}{2}m \right) - \frac{c}{m}g\left( \frac{t}{2}m \right)f\left(h_1t\right) + if\left( \frac{t}{2}m \right)g\left(h_1t\right) - i\frac{h_1}{m}g\left( \frac{t}{2}m \right)f\left(h_1t\right) = \frac{a}{m}\alpha_2 - \frac{c}{m}\alpha_3 + i\alpha_4 - i\frac{h_1}{m}\alpha_3,\\
    & b_{21} = -\frac{-c+ih_1}{m}\sin{\left(\frac{t}{2}m\right)}\cos{\left(h_1 t\right)} + i\sin{\left(h_1 t\right)}\cos{\left(\frac{t}{2}m\right)} - \frac{a}{m}\sin{\left(\frac{t}{2}m\right)}\sin{\left(h_1t\right)} = \\
    & = \frac{c}{m}g\left( \frac{t}{2}m \right)f\left(h_1t\right) - \frac{a}{m}g\left( \frac{t}{2}m \right)g\left(h_1t\right) - i\frac{h_1}{m}g\left( \frac{t}{2}m \right)f\left(h_1t\right) + if\left( \frac{t}{2}m \right)g\left(h_1t\right) = \frac{c}{m}\alpha_3 - \frac{a}{m}\alpha_2 - i \frac{h_1}{m}\alpha_3 + i\alpha_4,\\
    & b_{22} = -i\frac{-c+ih_1}{m}\sin{\left(\frac{t}{2}m\right)}\sin{\left(h_1 t\right)} + \cos{\left( \frac{t}{2}m \right)}\cos{\left(h_1 t\right)} + i\frac{a}{m}\sin{\left(\frac{t}{2}m\right)}\cos{\left(h_1 t\right)} = \\
    & = \frac{h_1}{m}g\left( \frac{t}{2}m \right)g\left(h_1t\right) + f\left( \frac{t}{2}m \right)f\left(h_1t\right) + i\frac{c}{m}g\left( \frac{t}{2}m \right)g\left(h_1t\right) + i\frac{a}{m}g\left( \frac{t}{2}m \right)f\left(h_1t\right) = \frac{h_1}{m}\alpha_2 + \alpha_1 + i\frac{c}{m}\alpha_2 + i\frac{a}{m}\alpha_3,
    \end{align*}

    \begin{align*}
    & \alpha_1 = \cos{(h_1t)} \cos{\left( \frac{t}{2}m \right)}, \ \alpha_2 = \sin{(h_1t)} \sin{\left( \frac{t}{2}m \right)}, \ \alpha_3 = \cos{(h_1t)}\sin{\left( \frac{t}{2}m \right)},\ \alpha_4 = \sin{\left( h_1 t \right)}\cos{\left( \frac{t}{2}m \right)},\\
    & h_1 = -\ch{\theta_0}, \quad  a = \sin{\varphi_0}\sh{\theta_0}, \quad c = \cos{\varphi_0}\sh{\theta_0}, \quad m = \sqrt{\ch{(2\theta_0)}}.
    \end{align*}

    Thus, the solution is:

    \begin{equation*}
    \begin{cases}
    x_1(t) = x_{10}(\alpha_1 + \frac{h_1}{m}\alpha_2) + y_{10}(\frac{a}{m}\alpha_3 + \frac{c}{m}\alpha_2) + x_{20}(\frac{c}{m}\alpha_3 - \frac{a}{m}\alpha_2) + y_{20}(\frac{h_1}{m}\alpha_3 - \alpha_4),\\
    y_1(t) = -x_{10}(\frac{a}{m}\alpha_3 + \frac{c}{m}\alpha_2) + y_{10}(\alpha_1 + \frac{h_1}{m}\alpha_2) + x_{20}(-\frac{h_1}{m}\alpha_3 + \alpha_4) + y_{20}(\frac{c}{m}\alpha_3 - \frac{a}{m}\alpha_2),\\
    x_2(t) = x_{10}( \frac{a}{m}\alpha_2 - \frac{c}{m}\alpha_3 ) + y_{10}( -\alpha_4 + \frac{h_1}{m}\alpha_3 ) + x_{20}( \frac{h_1}{m}\alpha_2 + \alpha_1 ) + y_{20}( -\frac{c}{m}\alpha_2 - \frac{a}{m}\alpha_3 ),\\
    y_2(t) = x_{10}( \alpha_4 - \frac{h_1}{m}\alpha_3 ) + y_{10}( \frac{a}{m}\alpha_2 - \frac{c}{m}\alpha_3 ) + x_{20}(\frac{c}{m}\alpha_2 + \frac{a}{m}\alpha_3 )+ y_{20}( \frac{h_1}{m}\alpha_2 + \alpha_1 ).
    \end{cases}
    \end{equation*}

\end{itemize}
\end{proof}

\subsection{Abnormal Extremals}

Abnormal extremals are trajectories of the Hamiltonian system with the Hamiltonian $H = \frac{1}{2} (-h_1^2 + h_2^2 + h_3^2)$, satisfying the conditions $H = 0$, $h_1 < 0$.

According to (\ref{vert_obsh}), the vertical part of the Hamiltonian system is:

    \begin{equation*}
    \begin{cases}
    \dot{h}_1 = 0,\\
    \dot{h}_2 = -2h_1h_3,\\
    \dot{h}_3 = 2h_1h_2.
    \end{cases}
    \end{equation*}

    We introduce a parameterization on the cone $\{H = 0, \ h_1 < 0\}$:

    \begin{align*}
    & -h_1^2 + h_2^2 + h_3^2 = 0,\\
    & h_1 = -r, \ h_2 = r\cos{\varphi}, \ h_3 = r\sin{\varphi},\ r > 0.
    \end{align*}

    \begin{equation}
    \label{vert_anorm}
    \begin{cases}
    \dot{h}_1 = -\dot{r},\\
    \dot{h}_2 = \dot{r}\cos{\varphi} - \dot{\varphi}r\sin{\varphi},\\
    \dot{h}_3 = \dot{r}\sin{\varphi} + \dot{\varphi}r \cos{\varphi}.
    \end{cases}
    \end{equation}

    \begin{equation}
    \label{anorm_comb}
    2h_1h_3 = -2r^2\sin{\varphi},\quad 2h_1h_2 = -2r^2\cos{\varphi}.
    \end{equation}

    Using (\ref{vert_obsh}), (\ref{vert_anorm}) and (\ref{anorm_comb}), the system is rewritten as follows:

    \begin{equation*}
    \begin{cases}
    \dot{h}_1 = -\dot{r} = 0,\\
    \dot{h}_2 = \dot{r}\cos{\varphi} - \dot{\varphi}r\sin{\varphi} = -2h_1h_3 = 2r^2\sin{\varphi},\\
    \dot{h}_3 = \dot{r}\sin{\varphi} + \dot{\varphi}r \cos{\varphi} = 2h_1h_2 = -2r^2\cos{\varphi}.
    \end{cases}
    \end{equation*}

    We obtain:

    \begin{equation*}
    \begin{cases}
    r \equiv \const = r_0 > 0,\\
    \varphi = -2r_0 t + \varphi_0.
    \end{cases}
    \end{equation*}

    Thus,

    $$
    h_1 = -r_0,\ h_2 = r_0\cos{(-2r_0t + \varphi_0)}, \ h_3 = r_0\sin{(-2r_0t + \varphi_0)},\ r_0 > 0.
    $$

    Let us write out the horizontal system:

    \begin{equation*}
    \dot{q} = -\frac{i}{2} q\left( -h_1\sigma_1 + h_2\sigma_2 + h_3\sigma_3 \right) \Leftrightarrow 
    \begin{cases}
    \dot{z}_1 = \left( -\frac{i}{2} \right)(z_1h_3 + z_2(-h_1 +ih_2)),\\
    \dot{z}_2 = \left( -\frac{i}{2} \right)(-z_1(h_1 + ih_2) - z_2h_3).
    \end{cases}
    \end{equation*}

    \begin{equation*}
    \begin{cases}
    \dot{x}_1 = \frac{r_0}{2}\left( y_1\sin{(-2r_0t+\varphi_0)} + x_2\cos{(-2r_0t+\varphi_0)} + y_2 \right),\\
    \dot{y}_1 = \frac{r_0}{2}\left( -x_1\sin{(-2r_0t+\varphi_0)} - x_2 + y_2\cos{(-2r_0t+\varphi_0)} \right),\\
    \dot{x}_2 = \frac{r_0}{2}\left( -x_1\cos{(-2r_0t+\varphi_0)} + y_1 -y_2\sin{(-2r_0t+\varphi_0)} \right),\\
    \dot{y}_2 = \frac{r_0}{2}\left( -x_1 - y_1\cos{(-2r_0t+\varphi_0)} +  x_2\sin{(-2r_0t+\varphi_0)} \right).
    \end{cases}
    \end{equation*}

    \begin{theorem}
    \label{anorm_traj_form}
    Abnormal extremals with initial conditions $(r_0,\varphi_0,x_{10},y_{10},x_{20},y_{20})$ are given by the following formulas:
    \begin{equation}
    \label{anorm_traj}
    \begin{cases}
    x_1(t) = x_{10}(\alpha_1 + \frac{h_1}{m}\alpha_2) + y_{10}(\frac{a}{m}\alpha_3 + \frac{c}{m}\alpha_2) + x_{20}(\frac{c}{m}\alpha_3 - \frac{a}{m}\alpha_2) + y_{20}(\frac{h_1}{m}\alpha_3 - \alpha_4),\\
    y_1(t) = -x_{10}(\frac{a}{m}\alpha_3 + \frac{c}{m}\alpha_2) + y_{10}(\alpha_1 + \frac{h_1}{m}\alpha_2) + x_{20}(-\frac{h_1}{m}\alpha_3 + \alpha_4) + y_{20}(\frac{c}{m}\alpha_3 - \frac{a}{m}\alpha_2),\\
    x_2(t) = x_{10}( \frac{a}{m}\alpha_2 - \frac{c}{m}\alpha_3 ) + y_{10}( -\alpha_4 + \frac{h_1}{m}\alpha_3 ) + x_{20}( \frac{h_1}{m}\alpha_2 + \alpha_1 ) + y_{20}( -\frac{c}{m}\alpha_2 - \frac{a}{m}\alpha_3 ),\\
    y_2(t) = x_{10}( \alpha_4 - \frac{h_1}{m}\alpha_3 ) + y_{10}( \frac{a}{m}\alpha_2 - \frac{c}{m}\alpha_3 ) + x_{20}(\frac{c}{m}\alpha_2 + \frac{a}{m}\alpha_3 )+ y_{20}( \frac{h_1}{m}\alpha_2 + \alpha_1 ),
    \end{cases}
    \end{equation}

    where 

    \begin{align*}
    & \alpha_1 = \cos{(h_1t)} \cos{\left( \frac{t}{2}m \right)}, \ \alpha_2 = \sin{(h_1t)} \sin{\left( \frac{t}{2}m \right)}, \ \alpha_3 = \cos{(h_1t)}\sin{\left( \frac{t}{2}m \right)},\ \alpha_4 = \sin{\left( h_1 t \right)}\cos{\left( \frac{t}{2}m \right)},\\
    & h_1 = -r_0 < 0,\ m = r_0\sqrt{2},\ a = r_0\sin{\varphi_0},\ c = r_0 \cos{\varphi_0}.
    \end{align*}
    \end{theorem}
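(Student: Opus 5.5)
The plan is to repeat the scheme used in the proof of Theorem \ref{norm_traj_form}, step by step. The only changes are the constants: $h_1=-r_0$, and the vertical solution is now $h_2=r_0\cos(-2r_0t+\varphi_0)$, $h_3=r_0\sin(-2r_0t+\varphi_0)$. These are the formulas of the normal case with $\ch\theta_0$ and $\sh\theta_0$ both replaced by $r_0$.

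First I write the horizontal subsystem as $\dot q = V_t(q) := -h_1X_1+h_2X_2+h_3X_3$. Since $\dot h_1=0$, $\dot h_2=-2h_1h_3$ and $\dot h_3=2h_1h_2$, differentiation gives
$$\frac{d}{dt}V_t = -2h_1h_3X_2+2h_1h_2X_3 = [f,V_t], \qquad f=2h_1X_1=-2r_0X_1.$$
This uses only $[X_1,X_2]=X_3$ and $[X_3,X_1]=X_2$, exactly as in step (b) of the normal case. Hence $V_t=e^{t\ad f}g$ with
$$g=V_0=r_0X_1+cX_2+aX_3, \qquad c=r_0\cos\varphi_0,\quad a=r_0\sin\varphi_0.$$
Formula (19.6) of \cite{notes} then gives $q(t)=e^{-tf}\circ e^{t(f+g)}(q_0)$.

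Next I compute the two flows. The flow of $-f$ is literally the same as before: $q_0\begin{pmatrix}\cos(h_1t)& i\sin(h_1t)\\ i\sin(h_1t)&\cos(h_1t)\end{pmatrix}$. For $f+g=-r_0X_1+cX_2+aX_3$ the generator is $-\frac i2 A$, where $A$ has the same shape as in the normal case with the new $a,c,h_1$. The key check is
$$A^2=(h_1^2+a^2+c^2)E=2r_0^2E=m^2E, \qquad m=r_0\sqrt2.$$
This replaces $\ch^2\theta_0+\sh^2\theta_0=\ch 2\theta_0$ from the normal case. Summing the exponential series therefore gives the same matrix as before, with entries $\cos(\frac t2 m)$ and $\frac{\cdot}{m}\sin(\frac t2 m)$.

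Since every subsequent manipulation in the proof of Theorem \ref{norm_traj_form} was purely algebraic in the symbols $h_1,a,c,m$, the rest carries over unchanged. That covers the composition of the two matrices, the entries $b_{ij}$ in terms of $\alpha_1,\dots,\alpha_4$, and the left multiplication by $q_0$ with $z_{j0}=x_{j0}+iy_{j0}$. It produces formulas \eqref{anorm_traj} verbatim. The only place where the old relations $h_1=-\ch\theta_0$ and $m^2=\ch 2\theta_0$ were used was $A^2=m^2E$, which has been re-verified above.

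The main obstacle is bookkeeping: confirming that no step silently relied on the hyperbolic identities rather than on $m^2=h_1^2+a^2+c^2$. To be safe, I would verify directly that the resulting $q(t)$ satisfies the abnormal horizontal system at $t=0$ and has the correct derivative there. Both follow from the general identity $\frac{d}{dt}\bigl(e^{-tf}\circ e^{t(f+g)}\bigr)=V_t$.
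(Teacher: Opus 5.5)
Your proposal is correct and follows the paper's proof essentially step for step: the same representation $V_t=e^{t\ad f}g$ with $f=2h_1X_1$ and $g=V_0$, the same formula $q(t)=e^{-tf}\circ e^{t(f+g)}(q_0)$, and the same two matrix exponentials followed by left multiplication by $q_0$. The only difference is cosmetic: the paper factors out $r_0$, checks $A^2=2E$ and then writes the composition out explicitly, whereas you note $A^2=(h_1^2+a^2+c^2)E=m^2E$ and reuse the normal-case algebra, which is legitimate because that algebra depends only on the symbols $h_1,a,c,m$.
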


\begin{proof}
We use the following method for solving a non-autonomous system of differential equations from \cite{notes}, analogously to how we did it when deriving the normal extremals in Theorem \ref{norm_traj_form}:

    \begin{enumerate}
\item[$(a)$]
Write the horizontal subsystem of the Hamiltonian system as $\dot q = V_t(q) := - h_1 X_1 + h_2 X_2 + h_3 X_3$
\item[$(b)$]
Prove that $\frac{d}{dt} V_t = \ad f (V_t)$ for some field $f$.
\item[$(c)$]
Conclude from this that $V_t = e^{t \ad f} g$ for some field $g$.
\item[$(d)$]
Use formula (19.6) from \cite{notes}
$$
q(t) = \stackrel{\longrightarrow}{\exp} \int_0^t V_{\tau} dt (q_0) 
= \stackrel{\longrightarrow}{\exp} \int_0^t e^{\tau \ad f} g dt (q_0)
= e^{-tf} \circ e^{t(f+g)} (q_0).
$$
\end{enumerate}

\begin{enumerate}
    \item[$(a)$]
    \begin{align*}
    & \dot{q} = V_t(q):= -h_1X_1 + h_2X_2 + h_3X_3 = -\frac{i}{2}q(-h_1\sigma_1 + h_2\sigma_2 + h_3\sigma_3) = \\
    & = -\frac{ir_0}{2}q\left( \sigma_1 + \cos{(-2r_0t+\varphi_0)}\sigma_2 + \sin{(-2r_0t+\varphi_0)}\sigma_3 \right).
    \end{align*}
    \item[$(b)$] 
    $$
    \frac{d}{dt}V_t = -\frac{ir_0}{2}q\left( 2r_0\sin{(-2r_0t+\varphi_0)}\sigma_2 - 2r_0\cos{(-2r_0t+\varphi_0)}\sigma_3 \right) = -2h_1h_3X_2 + 2h_1h_2X_3.
    $$

    Let $f = 2h_1X_1$,
    $$
    [f,V_t] = [2h_1X_1,-h_1X_1 + h_2X_2 + h_3X_3] = 2h_1h_2[X_1,X_2] + 2h_1h_3[X_1,X_3] = 2h_1h_2X_3 - 2h_1h_3X_2 = \frac{d}{dt}V_t.
    $$
    \item[$(c)$] 
    We need to find $g$ such that $V_t = e^{t\ad{f}}g$.

    Substitute $t = 0$, then $g = V_0$:

    $$
    V_0 = -h_1\vert_{t = 0}X_1 + h_2\vert_{t = 0}X_2 + h_3\vert_{t = 0}X_3 = r_0X_1 + r_0\cos{\varphi_0}X_2 + r_0\sin{\varphi_0}X_3 = g.
    $$
    
    \item[$(d)$] 

    $$
    q(t) = \stackrel{\longrightarrow}{\exp} \int_0^t V_{\tau} dt (q_0) 
    = \stackrel{\longrightarrow}{\exp} \int_0^t e^{\tau \ad f} g dt (q_0)
    = e^{-tf} \circ e^{t(f+g)} (q_0).
    $$

    Let us compute the flows of the fields $-f$ and $f+g$, and then their composition.

    First, $-f$:

    \begin{align*}
    & \dot{q} = -2h_1X_1 = -\frac{i}{2}2r_0\sigma_1 = -ir_0\sigma_1 \Rightarrow q(t) = q_0e^{ih_1t\sigma_1}.
    \end{align*}

    Then:

    \begin{align*}
    & e^{-tf}(q_0) = \left[ E + (-ir_0)t\sigma_1 + (-ir_0)^2\frac{t^2}{2}\sigma_1^2 + (-ir_0)^3\frac{t^3}{3!}\sigma_1^3 + (-ir_0)^4 \frac{t^4}{4!}\sigma_1^4 + \right. ... + \\
    & \left. + (-ir_0)^{4k}\frac{t^{4k}}{(4k)!}\sigma_1^{4k} + (-ir_0)^{4k+1}\frac{t^{4k+1}}{(4k+1)!}\sigma_1^{4k+1} + (-ir_0)^{4k+2}\frac{t^{4k+2}}{(4k+2)!}\sigma_1^{4k+2} + (-ir_0)^{4k+3}\frac{t^{4k+3}}{(4k+3)!}\sigma_1^{4k+3} + ... \right] (q_0) = \\
    & = \left[ \cos{(tr_0)}E - i\sin{(tr_0)}\sigma_1 \right](q_0) = q_0\begin{pmatrix}
    \cos{(tr_0)} & -i\sin{(tr_0)}\\
    -i\sin{(tr_0)} & \cos{(tr_0)}
    \end{pmatrix} = q_0\begin{pmatrix}
        \cos{(h_1t)} & i\sin{(h_1t)} \\
        i\sin{(h_1t)} & \cos{(h_1t)}
    \end{pmatrix},
    \end{align*}

    since

    \begin{align*}
    & \sigma_1 = \begin{pmatrix}
    0 & 1 \\
    1 & 0
    \end{pmatrix},\ \sigma_1^2 = \begin{pmatrix}
    0 & 1\\
    1 & 0
    \end{pmatrix}\begin{pmatrix}
    0 & 1\\
    1 & 0
    \end{pmatrix} = \begin{pmatrix}
    1 & 0\\
    0 & 1
    \end{pmatrix} \Rightarrow \sigma_1^{2k-1} = \begin{pmatrix}
    0 & 1\\
    1 & 0
    \end{pmatrix},\ \sigma_1^{2k} = \begin{pmatrix}
    1 & 0\\
    0 & 1
    \end{pmatrix},\ k \in \mathbb{N}.
    \end{align*}

    Now $f+g$:

    \begin{align*}
    & f+g = -2r_0X_1 + r_0X_1 + r_0\cos{\varphi_0}X_2 + r_0\sin{\varphi_0}X_3 = \frac{-ir_0}{2}\left( -\sigma_1 + \cos{\varphi_0}\sigma_2 + \sin{\varphi_0}\sigma_3 \right) = \\
    & = \frac{-ir_0}{2}\left( \begin{pmatrix}
    0 & -1\\
    -1 & 0
    \end{pmatrix} + \cos{\varphi_0}\begin{pmatrix}
    0 & -i\\
    i & 0
    \end{pmatrix} + \sin{\varphi_0}\begin{pmatrix}
    1 & 0\\
    0 & -1
    \end{pmatrix} \right) = \frac{-ir_0}{2}\begin{pmatrix}
    \sin{\varphi_0} & -1 - i\cos{\varphi_0}\\
    -1 + i\cos{\varphi_0} & -\sin{\varphi_0}
    \end{pmatrix} = \frac{-ir_0}{2}A.
    \end{align*}

    Since

    $$
    A^2 = \begin{pmatrix}
    \sin^2{\varphi_0} + 1 + \cos^2{\varphi_0} & -\sin{\varphi_0} - i\sin{\varphi_0}\cos{\varphi_0} + \sin{\varphi_0} + i\sin{\varphi_0}\cos{\varphi_0} \\
    -\sin{\varphi_0} + i\sin{\varphi_0}\cos{\varphi_0} + \sin{\varphi_0} - i\sin{\varphi_0}\cos{\varphi_0} & 1 + \cos^2{\varphi_0} + \sin^2{\varphi_0}
    \end{pmatrix} = 2E,
    $$

    then:

    \begin{align*}
    & e^{t(f+g)}(q_0) = \left[E - i\frac{r_0t}{2}A - \frac{(r_0t)^2}{2^2 2!}2E + i\frac{(r_0t)^3}{2^3 3!}2A + \frac{(r_0t)^4}{2^4 4!}4E + ... + \frac{(r_0t)^{4k}}{2^{4k} (4k)!}2^{k}E - i\frac{(r_0t)^{4k+1}}{2^{4k+1} (4k+1)!}2^{k}A - \right. \\
    & \left. - \frac{(r_0t)^{4k+2}}{2^{4k+2} (4k+2)!}2^{k+1}E + i\frac{(r_0t)^{4k+3}}{2^{4k+3} (4k+3)!}2^{k+1}A + ... \right](q_0)  = \left[ \cos{\left( \frac{r_0t}{\sqrt{2}} \right)}E - \frac{i}{\sqrt{2}}\sin{\left( \frac{r_0t}{\sqrt{2}} \right)}A \right](q_0) = \\
    & = q_0 \begin{pmatrix}
    \cos{\left( \frac{r_0t}{\sqrt{2}} \right)}-\frac{i}{\sqrt{2}}\sin{\varphi_0}\sin{\left( \frac{r_0t}{\sqrt{2}} \right)} & \frac{\sin{\left( \frac{r_0t}{\sqrt{2}} \right)}}{\sqrt{2}}\left( -\cos{\varphi_0} + i \right) \\
    \frac{\sin{\left( \frac{r_0t}{\sqrt{2}} \right)}}{\sqrt{2}}\left( \cos{\varphi_0} + i \right) & \cos{\left( \frac{r_0t}{\sqrt{2}} \right)} + \frac{i}{\sqrt{2}}\sin{\varphi_0}\sin{\left( \frac{r_0t}{\sqrt{2}} \right)}
    \end{pmatrix} = \\
    & = q_0\begin{pmatrix}
    \cos{(mt/2)} - ia/m\sin{(mt/2)} & \frac{-c - ih_1}{m}\sin{(mt/2)}\\
    \frac{c - ih_1}{m}\sin{(mt/2)} & \cos{(mt/2)} + i\frac{a}{m}\sin{(mt/2)}
    \end{pmatrix},
    \end{align*}

    where

    $$
    m = r_0\sqrt{2}, \ a = r_0\sin{\varphi_0},\ c = r_0\cos{\varphi_0}.
    $$

    Composition:

    \begin{align*}
    & e^{-tf} \circ e^{t(f+g)}(q_0) = q_0\begin{pmatrix}
    \cos{(mt/2)} - ia/m\sin{(mt/2)} & \frac{-c - ih_1}{m}\sin{(mt/2)}\\
    \frac{c - ih_1}{m}\sin{(mt/2)} & \cos{(mt/2)} + i\frac{a}{m}\sin{(mt/2)}
    \end{pmatrix}\begin{pmatrix}
        \cos{(h_1t)} & i\sin{(h_1t)} \\
        i\sin{(h_1t)} & \cos{(h_1t)}
    \end{pmatrix} = \\
    & = q_0\begin{pmatrix}
    \alpha_1+h_1/m\alpha_2+i(-a/m\alpha_3-c/m\alpha_2) & a/m\alpha_2-c/m\alpha_3+i(\alpha_4-h_1/m\alpha_3)\\
    c/m\alpha_3 - a/m\alpha_2 + i(-h_1/m\alpha_3 + \alpha_4) & h_1/m\alpha_2 + \alpha_1 + i(c/m\alpha_2 + a/m\alpha_3)
    \end{pmatrix} = \\
    & = \begin{pmatrix}
        z_{10} & z_{20} \\
        -\bar{z}_{20} & \bar{z}_{10}
    \end{pmatrix}\begin{pmatrix}
    b_1 & b_2\\
    -\bar{b}_2 & \bar{b}_1
    \end{pmatrix} = \begin{pmatrix}
    z_{10}b_1 - z_{20}\bar{b}_2 & z_{10}b_2 + z_{20}\bar{b}_1 \\
    -b_1\bar{z}_{20} - b_2\bar{z}_{10} & -b_2\bar{z}_{20} + \bar{b}_1\bar{z}_{10}
    \end{pmatrix},
    \end{align*}

    where
    
    \begin{align*}
    & z_{10}b_1 - z_{20}\bar{b}_2 = (x_{10}+iy_{10})(\alpha_1+h_1/m\alpha_2+i(-a/m\alpha_3-c/m\alpha_2)) - (x_{20}+iy_{20})(a/m\alpha_2-c/m\alpha_3-i(\alpha_4-h_1/m\alpha_3)) = \\
    & = x_{10}(\alpha_1 + h_1/m\alpha_2) + y_{10}(a/m\alpha_3 + c/m\alpha_2)+x_{20}(c/m\alpha_3 - a/m\alpha_2) + y_{20}(h_1/m\alpha_3 - \alpha_4) + \\
    & + i( x_{10}(-a/m\alpha_3 - c/m\alpha_2) + y_{10}(\alpha_1 + h_1/m\alpha_2) + x_{20}(\alpha_4 - h_1/m\alpha_3) + y_{20}(c/m\alpha_3 - a/m\alpha_2) ).
    \end{align*}

    \begin{align*}
    & z_{10}b_2 + z_{20}\bar{b}_1 = (x_{10}+iy_{10})(a/m\alpha_2-c/m\alpha_3+i(\alpha_4-h_1/m\alpha_3)) + (x_{20}+iy_{20})(h_1/m\alpha_2 + \alpha_1 + i(c/m\alpha_2 + a/m\alpha_3)) = \\
    & = x_{10}(a/m\alpha_2 - c/m\alpha_3)+y_{10}(h_1/m\alpha_3-\alpha_4) + x_{20}(h_1/m\alpha_2 + \alpha_1) + y_{20}(-c/m\alpha_2 - a/m\alpha_3) + \\
    & + i(x_{10}(\alpha_4 - h_1/m\alpha_3) + y_{10}(a/m\alpha_2 - c/m\alpha_3) + x_{20}(c/m\alpha_2 + a/m\alpha_3) + y_{20}(h_1/m\alpha_2 + \alpha_1)).
    \end{align*}
    
\end{enumerate}

    Thus, the solution is:

    \begin{equation*}
    \begin{cases}
    x_1(t) = x_{10}(\alpha_1 + \frac{h_1}{m}\alpha_2) + y_{10}(\frac{a}{m}\alpha_3 + \frac{c}{m}\alpha_2) + x_{20}(\frac{c}{m}\alpha_3 - \frac{a}{m}\alpha_2) + y_{20}(\frac{h_1}{m}\alpha_3 - \alpha_4),\\
    y_1(t) = -x_{10}(\frac{a}{m}\alpha_3 + \frac{c}{m}\alpha_2) + y_{10}(\alpha_1 + \frac{h_1}{m}\alpha_2) + x_{20}(-\frac{h_1}{m}\alpha_3 + \alpha_4) + y_{20}(\frac{c}{m}\alpha_3 - \frac{a}{m}\alpha_2),\\
    x_2(t) = x_{10}( \frac{a}{m}\alpha_2 - \frac{c}{m}\alpha_3 ) + y_{10}( -\alpha_4 + \frac{h_1}{m}\alpha_3 ) + x_{20}( \frac{h_1}{m}\alpha_2 + \alpha_1 ) + y_{20}( -\frac{c}{m}\alpha_2 - \frac{a}{m}\alpha_3 ),\\
    y_2(t) = x_{10}( \alpha_4 - \frac{h_1}{m}\alpha_3 ) + y_{10}( \frac{a}{m}\alpha_2 - \frac{c}{m}\alpha_3 ) + x_{20}(\frac{c}{m}\alpha_2 + \frac{a}{m}\alpha_3 )+ y_{20}( \frac{h_1}{m}\alpha_2 + \alpha_1 ),
    \end{cases}
    \end{equation*}

    where 

    \begin{align*}
    & \alpha_1 = \cos{(h_1t)} \cos{\left( \frac{t}{2}m \right)}, \ \alpha_2 = \sin{(h_1t)} \sin{\left( \frac{t}{2}m \right)}, \ \alpha_3 = \cos{(h_1t)}\sin{\left( \frac{t}{2}m \right)},\ \alpha_4 = \sin{\left( h_1 t \right)}\cos{\left( \frac{t}{2}m \right)},\\
    & h_1 = -r_0 < 0,\ m = r_0\sqrt{2},\ a = r_0\sin{\varphi_0},\ c = r_0 \cos{\varphi_0}.
    \end{align*}
\end{proof}

\subsection{Conjugate Points}

Let us write the normal extremal trajectories for the initial point $x_{10} = 1$, $y_{10} = 0$, $x_{20} = 0$, $y_{20} = 0$:

$$
\begin{cases}
    x_1(t) = \alpha_1 + \frac{h_1}{m}\alpha_2 = \cos{(h_1t)} \cos{\left( \frac{t}{2}m \right)} + \frac{h_1}{m}\sin{(h_1t)} \sin{\left( \frac{t}{2}m \right)},\\
    y_1(t) = -\frac{a}{m}\alpha_3 - \frac{c}{m}\alpha_2 = - \frac{a}{m}\cos{(h_1t)}\sin{\left( \frac{t}{2}m \right)} - \frac{c}{m}\sin{(h_1t)} \sin{\left( \frac{t}{2}m \right)},\\
    x_2(t) = \frac{a}{m}\alpha_2 - \frac{c}{m}\alpha_3 =  \frac{a}{m}\sin{(h_1t)} \sin{\left( \frac{t}{2}m \right)} - \frac{c}{m}\cos{(h_1t)}\sin{\left( \frac{t}{2}m \right)},\\
    y_2(t) = \alpha_4 - \frac{h_1}{m}\alpha_3 = \sin{\left( h_1 t \right)}\cos{\left( \frac{m}{2}t \right)} - \frac{h_1}{m}\cos{(h_1t)}\sin{\left( \frac{t}{2}m \right)}.
\end{cases}
$$

$$
\begin{cases}
x_1(t,\theta_0,\varphi_0) = \cos{(\ch{\theta_0}t)} \cos{\left( \frac{t}{2}\sqrt{\ch{(2\theta_0)}} \right)} + \frac{\ch{\theta_0}}{\sqrt{\ch{(2\theta_0)}}}\sin{(\ch{\theta_0}t)} \sin{\left( \frac{t}{2}\sqrt{\ch{(2\theta_0)}} \right)},\\
y_1(t, \theta_0, \varphi_0) = - \frac{\sin{\varphi_0}\sh{\theta_0}}{\sqrt{\ch{(2\theta_0)}}}\cos{(\ch{\theta_0}t)}\sin{\left( \frac{t}{2}\sqrt{\ch{(2\theta_0)}} \right)} + \frac{\cos{\varphi_0}\sh{\theta_0}}{\sqrt{\ch{(2\theta_0)}}}\sin{(\ch{\theta_0}t)} \sin{\left( \frac{t}{2}\sqrt{\ch{(2\theta_0)}} \right)},\\
x_2(t, \theta_0, \varphi_0) = -\frac{\sin{\varphi_0}\sh{\theta_0}}{\sqrt{\ch{(2\theta_0)}}}\sin{(\ch{\theta_0}t)} \sin{\left( \frac{t}{2}\sqrt{\ch{(2\theta_0)}} \right)} - \frac{\cos{\varphi_0}\sh{\theta_0}}{\sqrt{\ch{(2\theta_0)}}}\cos{(\ch{\theta_0}t)}\sin{\left( \frac{t}{2}\sqrt{\ch{(2\theta_0)}} \right)},\\
y_2(t, \theta_0, \varphi_0) = -\sin{\left( \ch{\theta_0} t \right)}\cos{\left( \frac{\sqrt{\ch{(2\theta_0)}}}{2}t \right)} + \frac{\ch{\theta_0}}{\sqrt{\ch{(2\theta_0)}}}\cos{(\ch{\theta_0}t)}\sin{\left( \frac{t}{2}\sqrt{\ch{(2\theta_0)}} \right)}.
\end{cases}
$$

Let us compute the derivatives with respect to $t$, $\varphi_0$ and $\theta_0$.

\begin{align*}
& \partial_t x_1 = \left( -h_1 + \frac{h_1}{2} \right)\sin{(h_1 t)}\cos{\left( \frac{t}{2}m \right)} + \left( - \frac{m}{2} + \frac{h_1^2}{m} \right)\cos{(h_1t)}\sin{\left( \frac{t}{2}m \right)} = -\frac{h_1}{2}\alpha_4(t) + \left( \frac{h_1^2}{m} - \frac{m}{2} \right)\alpha_3(t) ,\\
& \partial_t y_1 = \frac{ah_1}{m}\sin{(h_1t)}\sin{\left( \frac{t}{2}m \right)} - \frac{a}{2}\cos{(h_1t)}\cos{\left( \frac{t}{2}m \right)} - \frac{ch_1}{m}\cos{(h_1t)}\sin{\left( \frac{t}{2}m \right)} - \frac{c}{2}\sin{(h_1t)}\cos{\left( \frac{t}{2}m \right)} = \\
& = -\frac{a}{2}\alpha_1(t) + \frac{ah_1}{m}\alpha_2(t) - \frac{ch_1}{m}\alpha_3(t) - \frac{c}{2}\alpha_4(t),\\
& \partial_t x_2 = 
\frac{ah_1}{m}\cos{(h_1t)}\sin{\left( \frac{t}{2}m \right)} + \frac{a}{2}\sin{(h_1t)}\cos{\left( \frac{t}{2}m \right)} + \frac{ch_1}{m}\sin{(h_1 t)}\sin{\left( \frac{t}{2}m \right)} - \frac{c}{2}\cos{(h_1t)}\cos{\left( \frac{t}{2}m \right)} = \\
& = -\frac{c}{2}\alpha_1(t) + \frac{ch_1}{m}\alpha_2(t) + \frac{ah_1}{m}\alpha_3(t) + \frac{a}{2}\alpha_4(t),\\
& \partial_t y_2 = \left( h_1 - \frac{h_1}{2}\right)\cos{(h_1t)}\cos{\left( \frac{t}{2}m \right)} + \left( - \frac{m}{2} + \frac{h_1^2}{m} \right)\sin{(h_1t)}\sin{\left( \frac{t}{2}m \right)} = \frac{h_1}{2}\alpha_1(t) + \left( \frac{h_1^2}{m} - \frac{m}{2} \right)\alpha_2(t).
\end{align*}

$$
\begin{cases}
\partial_{\varphi_0}x_1 = 0,\\
\partial_{\varphi_0}y_1 = - \frac{\cos{\varphi_0}\sh{\theta_0}}{\sqrt{\ch{(2\theta_0)}}}\cos{(\ch{\theta_0}t)}\sin{\left( \frac{t}{2}\sqrt{\ch{(2\theta_0)}} \right)} - \frac{\sin{\varphi_0}\sh{\theta_0}}{\sqrt{\ch{(2\theta_0)}}}\sin{(\ch{\theta_0}t)} \sin{\left( \frac{t}{2}\sqrt{\ch{(2\theta_0)}} \right)} = - \frac{c}{m}\alpha_3(t) + \frac{a}{m}\alpha_2(t),\\
\partial_{\varphi_0}x_2 = -\frac{\cos{\varphi_0}\sh{\theta_0}}{\sqrt{\ch{(2\theta_0)}}}\sin{(\ch{\theta_0}t)} \sin{\left( \frac{t}{2}\sqrt{\ch{(2\theta_0)}} \right)} + \frac{\sin{\varphi_0}\sh{\theta_0}}{m}\cos{(\ch{\theta_0}t)}\sin{\left( \frac{t}{2}m \right)} = \frac{c}{m}\alpha_2(t) + \frac{a}{m}\alpha_3(t),\\
\partial_{\varphi_0}y_2 = 0.
\end{cases}
$$

\begin{align*}
\partial_{\theta_0}x_1 &  = -t\frac{\sh^3{\theta_0}}{\ch{(2\theta_0)}}\sin{(\ch{\theta_0}t)} \cos{\left( \frac{t}{2}\sqrt{\ch{(2\theta_0)}} \right)} - \frac{\sh{\theta_0}}{\ch^{3/2}{(2\theta_0)}}\sin{(\ch{\theta_0}t)} \sin{\left( \frac{t}{2}\sqrt{\ch{(2\theta_0)}} \right)} = \\
& = t\frac{\sh^3{\theta_0}}{m^2}\alpha_4(t) + \frac{\sh{\theta_0}}{m^3}\alpha_2(t);
\end{align*}

\begin{align*}
\partial_{\theta_0}y_1
& = \left( - \frac{\sin{\varphi_0}\ch{\theta_0}}{\ch^{3/2}{(2\theta_0)}} + t\frac{\cos{\varphi_0}\sh^2{\theta_0}}{\sqrt{\ch{(2\theta_0)}}} \right)\cos{(\ch{\theta_0}t)}\sin{\left( \frac{t}{2}\sqrt{\ch{(2\theta_0)}} \right)} + \\
& + \left( t\frac{\sin{\varphi_0}\sh^2{\theta_0}}{\sqrt{\ch{(2\theta_0)}}} + \frac{\cos{\varphi_0}\ch{\theta_0}}{\ch^{3/2}{(2\theta_0)}} \right)\sin{(\ch{\theta_0}t)} \sin{\left( \frac{t}{2}\sqrt{\ch{(2\theta_0)}} \right)} -\\
& - \frac{\sin{\varphi_0}\sh{\theta_0}}{\sqrt{\ch{(2\theta_0)}}}\cos{(\ch{\theta_0}t)}\cos{\left( \frac{t}{2}\sqrt{\ch{(2\theta_0)}} \right)} \frac{t\sh{(2\theta_0)}}{2\sqrt{\ch{(2\theta_0)}}} + \frac{\cos{\varphi_0}\sh{\theta_0}}{\sqrt{\ch{(2\theta_0)}}}\sin{(\ch{\theta_0}t)} \cos{\left( \frac{t}{2}\sqrt{\ch{(2\theta_0)}} \right)}\frac{t\sh{(2\theta_0)}}{2\sqrt{\ch{(2\theta_0)}}} = \\
& = - t\frac{a \th{(2\theta_0)}}{2}\alpha_1(t) - \left( t\frac{a\sh{\theta_0}}{m} - \frac{\cos{\varphi_0}h_1}{m^3} \right)\alpha_2(t) + \left(  \frac{\sin{\varphi_0}h_1}{m^3} + t\frac{c\sh{\theta_0}}{m} \right)\alpha_3(t) - t\frac{c \th{(2\theta_0)}}{2}\alpha_4(t);
\end{align*}

\begin{align*}
\partial_{\theta_0}x_2 &  = \left( -\frac{\sin{\varphi_0}\ch{\theta_0}}{\ch^{3/2}{(2\theta_0)}} + t\frac{\cos{\varphi_0}\sh^2{\theta_0}}{\sqrt{\ch{(2\theta_0)}}} \right)\sin{(\ch{\theta_0}t)} \sin{\left( \frac{t}{2}\sqrt{\ch{(2\theta_0)}} \right)} - \\
& - \left( t \frac{\sin{\varphi_0}\sh^2{\theta_0}}{\sqrt{\ch{(2\theta_0)}}} + \frac{\cos{\varphi_0}\ch{\theta_0}}{\ch^{3/2}{(2\theta_0)}} \right)\cos{(\ch{\theta_0}t)}\sin{\left( \frac{t}{2}\sqrt{\ch{(2\theta_0)}} \right)} -\\
& - \frac{\sin{\varphi_0}\sh{\theta_0}}{\sqrt{\ch{(2\theta_0)}}}\sin{(\ch{\theta_0}t)} \cos{\left( \frac{t}{2}\sqrt{\ch{(2\theta_0)}} \right)}\frac{t\sh{(2\theta_0)}}{2\sqrt{\ch{(2\theta_0)}}} - \frac{\cos{\varphi_0}\sh{\theta_0}}{\sqrt{\ch{(2\theta_0)}}}\cos{(\ch{\theta_0}t)}\cos{\left( \frac{t}{2}\sqrt{\ch{(2\theta_0)}} \right)}\frac{t\sh{(2\theta_0)}}{2\sqrt{\ch{(2\theta_0)}}} = \\
& = - t\frac{c \th{(2\theta_0)}}{2}\alpha_1(t) + \left( -\frac{\sin{\varphi_0}h_1}{m^3} - t\frac{c\sh{\theta_0}}{m} \right)\alpha_2(t) - \left( t \frac{a\sh{\theta_0}}{m} - \frac{\cos{\varphi_0}h_1}{m^3} \right)\alpha_3(t) + t\frac{a \th{(2\theta_0)}}{2}\alpha_4(t);
\end{align*}

\begin{align*}
\partial_{\theta_0}y_2 & = \left( -t\sh{\theta_0} + \frac{t\ch{\theta_0}\tanh{(2\theta_0)}}{2} \right)\cos{(t\ch{\theta_0})}\cos{\left( \frac{t}{2}\sqrt{\ch{(2\theta_0)}} \right)} - \frac{\sh{\theta_0}}{\ch^{3/2}{(2\theta_0)}}\cos{(\ch{\theta_0}t)} \sin{\left( \frac{t}{2}\sqrt{\ch{(2\theta_0)}} \right)} + \\
& + \left( \frac{t\sh{(2\theta_0)}}{2\sqrt{\ch{(2\theta_0)}}} - \frac{t\sh{\theta_0}\ch{\theta_0}}{\sqrt{\ch{(2\theta_0)}}}\right)\sin{(t\ch{\theta_0})}\sin{\left( \frac{t}{2}\sqrt{\ch{(2\theta_0)}} \right)} = \\
& = -t\frac{\sh^3{\theta_0}}{\ch{(2\theta_0)}}\cos{(t\ch{\theta_0})}\cos{\left( \frac{t}{2}\sqrt{\ch{(2\theta_0)}} \right)}- \frac{\sh{\theta_0}}{\ch^{3/2}{(2\theta_0)}}\cos{(\ch{\theta_0}t)} \sin{\left( \frac{t}{2}\sqrt{\ch{(2\theta_0)}} \right)} = \\
& = -t\frac{\sh^3{\theta_0}}{\ch{(2\theta_0)}}\alpha_1(t) - \frac{\sh{\theta_0}}{\ch^{3/2}{(2\theta_0)}}\alpha_3(t).
\end{align*}

Let us compute the determinant of the Jacobian matrix (since we are interested in the chart containing the group identity $x_1 = 1$, $y_1 = x_2 = y_2 = 0$):

\begin{equation*}
\begin{pmatrix}
\partial_t y_1 & \partial_{\theta_0} y_1 & \partial_{\varphi_0} y_1 \\
\partial_t x_2 & \partial_{\theta_0} x_2 & \partial_{\varphi_0} x_2 \\
\partial_t y_2 & \partial_{\theta_0} y_2 & 0 \\
\end{pmatrix}.
\end{equation*}

\begin{proposition}
\label{det_vych}
The determinant of the Jacobian matrix of the exponential map $(t, \theta_0, \varphi_0) \rightarrow (y_1,x_2,y_2)$ equals
\begin{align*}
& \sin{\left( \frac{t}{2}\sqrt{\ch{(2\theta_0)}} \right)}\frac{\sh{\theta_0}}{2\ch^2{(2\theta_0)}\sqrt{\ch{(2\theta_0)}}}\left( \sqrt{\ch{(2\theta_0)}}\sh^2{\theta_0}t\cos{\left( \frac{t}{2}\sqrt{\ch{(2\theta_0)}} \right)} + \sin{\left( \frac{t}{2}\sqrt{\ch{(2\theta_0)}} \right)} \right) \cdot \\
& \cdot \left( \sqrt{\ch{(2\theta_0)}}\cos{(t\ch{\theta_0})}\cos{\left( \frac{t}{2}\sqrt{\ch{(2\theta_0)}} \right)} + \ch{\theta_0}\sin{(t\ch{\theta_0})}\sin{\left( \frac{t}{2}\sqrt{\ch{(2\theta_0)}} \right)} \right).
\end{align*}
\end{proposition}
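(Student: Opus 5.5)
The plan is to compute the $3\times 3$ determinant by expanding along the third column, which contains the zero entry $\partial_{\varphi_0} y_2 = 0$. Writing $J$ for the matrix above, this gives
\[
\det J = \partial_{\varphi_0} y_1\,\bigl(\partial_t x_2\,\partial_{\theta_0} y_2 - \partial_t y_2\,\partial_{\theta_0} x_2\bigr) - \partial_{\varphi_0} x_2\,\bigl(\partial_t y_1\,\partial_{\theta_0} y_2 - \partial_t y_2\,\partial_{\theta_0} y_1\bigr).
\]
All entries are already expressed as combinations of $\alpha_1,\dots,\alpha_4$ with coefficients depending on $a, c, h_1, m, t, \theta_0$. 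I will substitute these expressions and treat everything as a polynomial in $\cos(h_1t),\sin(h_1t),\cos(\tfrac{t}{2}m),\sin(\tfrac{t}{2}m)$.

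The first simplification is the dependence on $\varphi_0$. Since $a=\sin\varphi_0\sh\theta_0$ and $c=\cos\varphi_0\sh\theta_0$, the column $(\partial_{\varphi_0}y_1,\partial_{\varphi_0}x_2)$ is a rotation of $(-c,a)$ type vectors. The $\varphi_0$-dependent parts of $\partial_t y_1,\partial_t x_2,\partial_{\theta_0}y_1,\partial_{\theta_0}x_2$ rotate in the same way, so all $\varphi_0$-dependence should cancel through $a^2+c^2=\sh^2\theta_0$. To make this transparent I will first set $\varphi_0=0$, so that $a=0$ and $c=\sh\theta_0$, which is legitimate once the invariance is noted. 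Then $\partial_{\varphi_0}y_1=-\tfrac{c}{m}\alpha_3$ and $\partial_{\varphi_0}x_2=\tfrac{c}{m}\alpha_2$, both of which carry a factor $\sin(\tfrac{t}{2}m)$. This is exactly the leading factor $\sin(\tfrac t2\sqrt{\ch 2\theta_0})$ in the claim, and together with $c=\sh\theta_0$ it accounts for the $\sh\theta_0$ as well.

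After pulling out $\tfrac{c}{m}\sin(\tfrac t2 m)$, what remains is
\[
-\cos(h_1t)\,\bigl(\partial_t x_2\,\partial_{\theta_0} y_2 - \partial_t y_2\,\partial_{\theta_0} x_2\bigr) - \sin(h_1t)\,\bigl(\partial_t y_1\,\partial_{\theta_0} y_2 - \partial_t y_2\,\partial_{\theta_0} y_1\bigr).
\]
I expect this combination to factor as the product of two brackets. The first bracket is $m\sh^2\theta_0\,t\cos(\tfrac t2m)+\sin(\tfrac t2m)$, the $t$-linear term coming from the $t$-parts of the $\theta_0$-derivatives. The second bracket is $m\cos(t\ch\theta_0)\cos(\tfrac t2m)+\ch\theta_0\sin(t\ch\theta_0)\sin(\tfrac tm 2)$ up to notation, i.e. essentially $m\,x_1(t)$.

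Carrying out this factorization is the main obstacle. My approach will be to collect terms in powers of $t$ and in the monomials $\cos^i(h_1t)\sin^j(h_1t)\cos^k(\tfrac t2m)\sin^l(\tfrac t2m)$. I will use the identities $m^2=\ch 2\theta_0=\ch^2\theta_0+\sh^2\theta_0$, $h_1^2-m^2/2=(\ch^2\theta_0-\sh^2\theta_0)/2=\tfrac12$, and $\th 2\theta_0=\sh 2\theta_0/m^2$. With these, the coefficient $\tfrac{h_1^2}{m}-\tfrac m2$ becomes $\tfrac{1}{2m}$, and $\sh^3\theta_0/m^2$ combines with $\tfrac{c\,\th2\theta_0}{2}$. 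I will then check that the coefficient of $t$ is proportional to $\cos(\tfrac t2m)$ times the second bracket, and that the $t$-free part is $\sin(\tfrac t2m)$ times the same bracket; this requires $\cos^2+\sin^2$ cancellations in $h_1t$. Finally I will gather the constant prefactors, which are powers of $m$ together with the $\tfrac12$, into $\sh\theta_0/(2\ch^2(2\theta_0)\sqrt{\ch 2\theta_0})$. The overall sign must be tracked carefully, since $h_1=-\ch\theta_0$ enters through odd functions.
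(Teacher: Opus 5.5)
Your proposal is correct, but it organizes the computation differently from the paper. The paper expands along the third row, keeps $\varphi_0$ arbitrary and multiplies everything out in the $\alpha_j$. It removes $\varphi_0$ only at the end, through $a^2+c^2=\sinh^2\theta_0$, $c\cos\varphi_0+a\sin\varphi_0=\sinh\theta_0$ and $c\sin\varphi_0-a\cos\varphi_0=0$. It then factors the resulting function $U$ as $16fg$, so the factors $\sin(\tfrac t2 m)$ and $\sinh\theta_0$ appear only after a long expansion, several lines of which are omitted. You instead expand along the third column and reduce to $\varphi_0=0$. The factor $\frac{\sinh\theta_0}{m}\sin(\tfrac t2 m)$ is then visible immediately, and what remains is a $\varphi_0$-free identity linear in the explicit $t$.

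To make the reduction rigorous rather than a \emph{should cancel}, observe that $(y_1,x_2)^T=R_{\varphi_0}w(t,\theta_0)$, with $R_{\varphi_0}$ a rotation and $w$ independent of $\varphi_0$, while $y_2$ does not depend on $\varphi_0$. So the Jacobian matrix is $\mathrm{diag}(R_{\varphi_0},1)$ times a $\varphi_0$-independent matrix.

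Your \emph{main obstacle} then becomes short. At $\varphi_0=0$ you have $y_1=\rho\sin(t\cosh\theta_0)$ and $x_2=-\rho\cos(t\cosh\theta_0)$, with $\rho=\frac{\sinh\theta_0}{m}\sin(\tfrac t2 m)$. The $\cos^2+\sin^2$ cancellation you anticipate turns your remaining combination into $\partial_t\rho\,\partial_{\theta_0}y_2-\partial_{\theta_0}\rho\,\partial_t y_2$. Writing $f,g$ for your two brackets:
\begin{itemize}
\item $\partial_t\rho=\tfrac12\sinh\theta_0\cos(\tfrac t2m)$;
\item $\partial_{\theta_0}\rho=\frac{\cosh\theta_0}{m^3}f$;
\item $\partial_{\theta_0}y_2=-\frac{\sinh\theta_0}{m^3}\cos(t\cosh\theta_0)f$;
\item $\partial_t y_2=-\frac{1}{2m}\bigl(m\cosh\theta_0\cos(t\cosh\theta_0)\cos(\tfrac t2m)+\sin(t\cosh\theta_0)\sin(\tfrac t2m)\bigr)$.
\end{itemize}
Hence $f$ factors out at once, and $\cosh^2\theta_0-\sinh^2\theta_0=1$ leaves $g/(2m^4)$. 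This gives $\det J=\rho fg/(2m^4)$, which is exactly the stated formula, with the positive sign.

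The paper's route needs no structural observation but is much longer. Yours uses the rotational symmetry in $\varphi_0$ to make both the leading factor and the factorization transparent.
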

\begin{proof}
Expand the determinant along the third row:

\begin{align*}
& \partial_t y_2 \begin{vmatrix}
\partial_{\theta_0}y_1 & \partial_{\varphi_0}y_1 \\
\partial_{\theta_0}x_2 & \partial_{\varphi_0}x_2
\end{vmatrix} - \partial_{\theta_0}y_2 \begin{vmatrix}
\partial_t y_1 & \partial_{\varphi_0} y_1 \\
\partial_{t}x_2 & \partial_{\varphi_0}x_2
\end{vmatrix} = \partial_t y_2 \left( \partial_{\theta_0}y_1 \partial_{\varphi_0}x_2 - \partial_{\theta_0}x_2 \partial_{\varphi_0}y_1 \right) - \partial_{\theta_0}y_2 \left( \partial_{t}y_1 \partial_{\varphi_0}x_2 - \partial_{t}x_2 \partial_{\varphi_0}y_1 \right) = \\
& = \left( \frac{h_1}{2}\alpha_1(t) + \left( \frac{h_1^2}{m} - \frac{m}{2} \right)\alpha_2(t) \right)\begin{vmatrix}
\partial_{\theta_0}y_1 & - \frac{c}{m}\alpha_3(t) + \frac{a}{m}\alpha_2(t) \\
\partial_{\theta_0}x_2 & \frac{c}{m}\alpha_2(t) + \frac{a}{m}\alpha_3(t)
\end{vmatrix} + \left( t\frac{\sh^3{\theta_0}}{m^2}\alpha_1(t) - \frac{\sh{\theta_0}}{m^3}\alpha_3(t) \right) \begin{vmatrix}
\partial_{t}y_1 & - \frac{c}{m}\alpha_3(t) + \frac{a}{m}\alpha_2(t) \\
\partial_{t}x_2 & \frac{c}{m}\alpha_2(t) + \frac{a}{m}\alpha_3(t)
\end{vmatrix} = \\
& = \left( \frac{h_1}{2}\alpha_1(t) + \left( \frac{h_1^2}{m} - \frac{m}{2} \right)\alpha_2(t) \right) \left( \left( \frac{c}{m}\partial_{\theta_0}y_1 - \frac{a}{m}\partial_{\theta_0}x_2 \right)\alpha_2(t) + \left( \frac{a}{m}\partial_{\theta_0}y_1 + \frac{c}{m}\partial_{\theta_0}x_2 \right) \alpha_3(t) \right) + \\
& + \left( t\frac{\sh^3{\theta_0}}{m^2}\alpha_1(t) - \frac{\sh{\theta_0}}{m^3}\alpha_3(t) \right)\left( \left( \frac{c}{m}\partial_t y_1 - \frac{a}{m}\partial_t x_2 \right)\alpha_2(t) + \left( \frac{a}{m}\partial_t y_1 + \frac{c}{m}\partial_t x_2 \right)\alpha_3(t) \right) = \\
& = \left( \frac{h_1}{2}\left( \frac{c}{m}\partial_{\theta_0}y_1 - \frac{a}{m}\partial_{\theta_0}x_2 \right) + t\frac{\sh^3{\theta_0}}{m^2}\left( \frac{c}{m}\partial_t y_1 - \frac{a}{m}\partial_t x_2 \right) \right) \alpha_1(t)\alpha_2(t) + \\
& + \left( \frac{h_1}{2}\left( \frac{a}{m}\partial_{\theta_0}y_1 + \frac{c}{m}\partial_{\theta_0}x_2 \right) + t\frac{\sh^3{\theta_0}}{m^2}\left( \frac{a}{m}\partial_t y_1 + \frac{c}{m}\partial_t x_2 \right) \right)\alpha_1(t)\alpha_3(t) + \\
& + \left( \left( \frac{h_1^2}{m} - \frac{m}{2} \right)\left( \frac{c}{m}\partial_{\theta_0}y_1 - \frac{a}{m}\partial_{\theta_0}x_2 \right)  \right) \alpha_2(t)\alpha_2(t) + \\
& + \left( \left( \frac{h_1^2}{m} - \frac{m}{2} \right)\left( \frac{a}{m}\partial_{\theta_0}y_1 + \frac{c}{m}\partial_{\theta_0}x_2 \right) - \frac{\sh{\theta_0}}{m^3}\left( \frac{c}{m}\partial_t y_1 - \frac{a}{m}\partial_t x_2 \right) \right)\alpha_2(t)\alpha_3(t) + \\
& + \left( -\frac{\sh{\theta_0}}{m^3}\left( \frac{a}{m}\partial_ty_1 + \frac{c}{m}\partial_tx_2 \right) \right) \alpha_3(t)\alpha_3(t) = \\
& = \left[ \frac{h_1}{2}\left( \frac{c}{m}\left( \left(  \frac{\sin{\varphi_0}h_1}{m^3} + t\frac{c\sh{\theta_0}}{m} \right)\alpha_3(t) - \left( t\frac{a\sh{\theta_0}}{m} - \frac{\cos{\varphi_0}h_1}{m^3} \right)\alpha_2(t) - t\frac{a \th{(2\theta_0)}}{2}\alpha_1(t) - t\frac{c \th{(2\theta_0)}}{2}\alpha_4(t) \right) - \right. \right. \\
& - \left. \left. \frac{a}{m}\left( \left( -\frac{\sin{\varphi_0}h_1}{m^3} - t\frac{c\sh{\theta_0}}{m} \right)\alpha_2(t) - \left( t \frac{a\sh{\theta_0}}{m} - \frac{\cos{\varphi_0}h_1}{m^3} \right)\alpha_3(t) + t\frac{a \th{(2\theta_0)}}{2}\alpha_4(t) - t\frac{c \th{(2\theta_0)}}{2}\alpha_1(t) \right) \right) + \right. \\
& + \left. t\frac{\sh^3{\theta_0}}{m^2}\left( \frac{c}{m}\left( -\frac{a}{2}\alpha_1(t) + \frac{ah_1}{m}\alpha_2(t) - \frac{ch_1}{m}\alpha_3(t) - \frac{c}{2}\alpha_4(t) \right) - \frac{a}{m}\left( -\frac{c}{2}\alpha_1(t) + \frac{ch_1}{m}\alpha_2(t) + \frac{ah_1}{m}\alpha_3(t) + \frac{a}{2}\alpha_4(t) \right) \right) \right] \alpha_1(t)\alpha_2(t) + 
\end{align*}

\begin{align*}
& + \left[ \frac{h_1}{2}\left( \frac{a}{m}\left( \left(  \frac{\sin{\varphi_0}h_1}{m^3} + t\frac{c\sh{\theta_0}}{m} \right)\alpha_3(t) - \left( t\frac{a\sh{\theta_0}}{m} - \frac{\cos{\varphi_0}h_1}{m^3} \right)\alpha_2(t) - t\frac{a \th{(2\theta_0)}}{2}\alpha_1(t) - t\frac{c \th{(2\theta_0)}}{2}\alpha_4(t) \right) + \right. \right. \\
& + \left. \left. \frac{c}{m}\left( \left( -\frac{\sin{\varphi_0}h_1}{m^3} - t\frac{c\sh{\theta_0}}{m} \right)\alpha_2(t) - \left( t \frac{a\sh{\theta_0}}{m} - \frac{\cos{\varphi_0}h_1}{m^3} \right)\alpha_3(t) + t\frac{a \th{(2\theta_0)}}{2}\alpha_4(t) - t\frac{c \th{(2\theta_0)}}{2}\alpha_1(t) \right) \right) + \right. \\
& + \left. t\frac{\sh^3{\theta_0}}{m^2}\left( \frac{a}{m}\left( -\frac{a}{2}\alpha_1(t) + \frac{ah_1}{m}\alpha_2(t) - \frac{ch_1}{m}\alpha_3(t) - \frac{c}{2}\alpha_4(t) \right) + \frac{c}{m}\left( -\frac{c}{2}\alpha_1(t) + \frac{ch_1}{m}\alpha_2(t) + \frac{ah_1}{m}\alpha_3(t) + \frac{a}{2}\alpha_4(t) \right) \right) \right]\alpha_1(t)\alpha_3(t) + \\
& + \left[ \left( \frac{h_1^2}{m} - \frac{m}{2} \right)\left( \frac{c}{m}\left( - t\frac{a \th{(2\theta_0)}}{2}\alpha_1(t) - \left( t\frac{a\sh{\theta_0}}{m} - \frac{\cos{\varphi_0}h_1}{m^3} \right)\alpha_2(t) + \left(  \frac{\sin{\varphi_0}h_1}{m^3} + t\frac{c\sh{\theta_0}}{m} \right)\alpha_3(t) - t\frac{c \th{(2\theta_0)}}{2}\alpha_4(t) \right) - \right. \right. \\
& \left. \left. - \frac{a}{m}\left( - t\frac{c \th{(2\theta_0)}}{2}\alpha_1(t) + \left( -\frac{\sin{\varphi_0}h_1}{m^3} - t\frac{c\sh{\theta_0}}{m} \right)\alpha_2(t) - \left( t \frac{a\sh{\theta_0}}{m} - \frac{\cos{\varphi_0}h_1}{m^3} \right)\alpha_3(t) + t\frac{a \th{(2\theta_0)}}{2}\alpha_4(t) \right) \right)  \right] \alpha_2(t)\alpha_2(t) + \\
& + \left[ \left( \frac{h_1^2}{m} - \frac{m}{2} \right)\left( \frac{a}{m}\left( - t\frac{a \th{(2\theta_0)}}{2}\alpha_1(t) - \left( t\frac{a\sh{\theta_0}}{m} - \frac{\cos{\varphi_0}h_1}{m^3} \right)\alpha_2(t) + \left(  \frac{\sin{\varphi_0}h_1}{m^3} + t\frac{c\sh{\theta_0}}{m} \right)\alpha_3(t) - t\frac{c \th{(2\theta_0)}}{2}\alpha_4(t) \right) + \right. \right. \\
& \left. \left. + \frac{c}{m}\left( - t\frac{c \th{(2\theta_0)}}{2}\alpha_1(t) + \left( -\frac{\sin{\varphi_0}h_1}{m^3} - t\frac{c\sh{\theta_0}}{m} \right)\alpha_2(t) - \left( t \frac{a\sh{\theta_0}}{m} - \frac{\cos{\varphi_0}h_1}{m^3} \right)\alpha_3(t) + t\frac{a \th{(2\theta_0)}}{2}\alpha_4(t) \right) \right) - \right. \\
& \left. - \frac{\sh{\theta_0}}{m^3}\left( \frac{c}{m}\left( -\frac{a}{2}\alpha_1(t) + \frac{ah_1}{m}\alpha_2(t) - \frac{ch_1}{m}\alpha_3(t) - \frac{c}{2}\alpha_4(t) \right) - \frac{a}{m}\left( -\frac{c}{2}\alpha_1(t) + \frac{ch_1}{m}\alpha_2(t) + \frac{ah_1}{m}\alpha_3(t) + \frac{a}{2}\alpha_4(t) \right) \right) \right] \alpha_2(t)\alpha_3(t) + \\
& + \left[ -\frac{\sh{\theta_0}}{m^3}\left( \frac{a}{m}\left( -\frac{a}{2}\alpha_1(t) + \frac{ah_1}{m}\alpha_2(t) - \frac{ch_1}{m}\alpha_3(t) - \frac{c}{2}\alpha_4(t) \right) + \frac{c}{m}\left( -\frac{c}{2}\alpha_1(t) + \frac{ch_1}{m}\alpha_2(t) + \frac{ah_1}{m}\alpha_3(t) + \frac{a}{2}\alpha_4(t) \right) \right) \right] \alpha_3(t)\alpha_3(t) = \\
& = \left[ \left( \frac{h_1}{2}\left( \frac{c}{m}( - t\frac{a \th{(2\theta_0)}}{2} ) - \frac{a}{m}( - t\frac{c \th{(2\theta_0)}}{2} ) \right) + t\frac{\sh^3{\theta_0}}{m^2}\left( \frac{c}{m}( -\frac{a}{2} ) - \frac{a}{m}( -\frac{c}{2} )  \right) \right)\alpha_1(t) + \right. \\
& + \left. \left( \frac{h_1}{2}\left( \frac{c}{m}( \frac{\cos{\varphi_0}h_1}{m^3} - t\frac{a\sh{\theta_0}}{m} ) - \frac{a}{m}( -\frac{\sin{\varphi_0}h_1}{m^3} - t\frac{c\sh{\theta_0}}{m} ) \right) + t\frac{\sh^3{\theta_0}}{m^2}\left( \frac{c}{m}( \frac{ah_1}{m}  ) - \frac{a}{m}( \frac{ch_1}{m} )  \right) \right)\alpha_2(t) + \right. \\
& + \left. \left( \frac{h_1}{2}\left( \frac{c}{m}( \frac{\sin{\varphi_0}h_1}{m^3} + t\frac{c\sh{\theta_0}}{m} ) - \frac{a}{m}( \frac{\cos{\varphi_0}h_1}{m^3} - t \frac{a\sh{\theta_0}}{m} ) \right) + t\frac{\sh^3{\theta_0}}{m^2}\left( \frac{c}{m}( - \frac{ch_1}{m} ) - \frac{a}{m}( \frac{ah_1}{m} )  \right) \right)\alpha_3(t) + \right. \\
& + \left. \left( \frac{h_1}{2}\left( \frac{c}{m}( - t\frac{c \th{(2\theta_0)}}{2} ) - \frac{a}{m}( t\frac{a \th{(2\theta_0)}}{2} ) \right) + t\frac{\sh^3{\theta_0}}{m^2}\left( \frac{c}{m}( - \frac{c}{2} ) - \frac{a}{m}( \frac{a}{2} )  \right) \right)\alpha_4(t)
\right]\alpha_1(t)\alpha_2(t) + \\
& + \left[ \left( \frac{h_1}{2}\left( \frac{a}{m}( - t\frac{a \th{(2\theta_0)}}{2} ) + \frac{c}{m}( - t\frac{c \th{(2\theta_0)}}{2} ) \right) + t\frac{\sh^3{\theta_0}}{m^2}\left( \frac{a}{m}( -\frac{a}{2} ) + \frac{c}{m}( -\frac{c}{2} ) \right) \right)\alpha_1(t) + \right. \\
& + \left. \left( \frac{h_1}{2}\left( \frac{a}{m}( \frac{\cos{\varphi_0}h_1}{m^3} - t\frac{a\sh{\theta_0}}{m} ) + \frac{c}{m}( -\frac{\sin{\varphi_0}h_1}{m^3} - t\frac{c\sh{\theta_0}}{m} ) \right) + t\frac{\sh^3{\theta_0}}{m^2}\left( \frac{a}{m}( \frac{ah_1}{m} ) + \frac{c}{m}( \frac{ch_1}{m} ) \right) \right)\alpha_2(t) + \right. \\
& + \left. \left( \frac{h_1}{2}\left( \frac{a}{m}( \frac{\sin{\varphi_0}h_1}{m^3} + t\frac{c\sh{\theta_0}}{m} ) + \frac{c}{m}( \frac{\cos{\varphi_0}h_1}{m^3} - t \frac{a\sh{\theta_0}}{m} ) \right) + t\frac{\sh^3{\theta_0}}{m^2}\left( \frac{a}{m}( - \frac{ch_1}{m} ) + \frac{c}{m}( \frac{ah_1}{m} ) \right) \right)\alpha_3(t) + \right. \\
& + \left. \left( \frac{h_1}{2}\left( \frac{a}{m}( - t\frac{c \th{(2\theta_0)}}{2} ) + \frac{c}{m}( t\frac{a \th{(2\theta_0)}}{2} ) \right) + t\frac{\sh^3{\theta_0}}{m^2}\left( \frac{a}{m}( - \frac{c}{2} ) + \frac{c}{m}( \frac{a}{2} ) \right) \right)\alpha_4(t)
\right]\alpha_1(t)\alpha_3(t) + \\
& + \left( \left( \frac{h_1^2}{m} - \frac{m}{2} \right)\left( h_1\frac{c\cos{\varphi_0}+a\sin{\varphi_0}}{m^4}\alpha_2(t) + t\frac{(a^2+c^2)\sh{\theta_0}}{m^2} \alpha_3(t) - t\frac{(a^2+c^2)\th{(2\theta_0)}}{2m}\alpha_4(t) \right)  \right) \alpha_2(t)\alpha_2(t) + \\
& + \left( \left( \frac{h_1^2}{m} - \frac{m}{2} \right)\left( -t\frac{(a^2+c^2)\th{(2\theta_0)}}{2m}\alpha_1(t) + h_1\frac{c\cos{\varphi_0} + a\sin{\varphi_0}}{m^4}\alpha_2(t) + h_1\frac{c\cos{\varphi_0} + a\sin{\varphi_0}}{m^4}\alpha_3(t) \right) - \right. \\
& \left. - \frac{\sh{\theta_0}}{m^3}\left( -h_1\frac{a^2+c^2}{m^2}\alpha_3(t) - \frac{a^2 + c^2}{2m}\alpha_4(t) \right) \right)\alpha_2(t)\alpha_3(t) + \\
& + \left( -\frac{\sh{\theta_0}}{m^3}\left( -\frac{a^2+c^2}{2m}\alpha_1(t) + h_1\frac{a^2+c^2}{m^2}\alpha_2(t) \right) \right) \alpha_3(t)\alpha_3(t) =
\end{align*}

\begin{align*}
& = 
\end{align*}

\begin{align*}
& m = \sqrt{\ch{(2\theta_0)}}, \quad  h_1 = -\ch{\theta_0}, \quad a = \sin{\varphi_0}\sh{\theta_0}, \quad c = \cos{\varphi_0}\sh{\theta_0},\\
& m^2 = \ch{(2\theta_0)}, \quad m^4 = \ch^2{(2\theta_0)},\\
& \frac{h_1^2}{m} - \frac{m}{2} = \frac{2h_1^2 - m^2}{2m} = \frac{2\ch^2{\theta_0} - \ch{(2\theta_0)}}{2m} = \frac{2\ch^2{\theta_0} - (2\ch^2{\theta_0} - 1)}{2m} = \frac{1}{2m},\\
& c^2 + a^2 = \cos^2{\varphi_0}\sh^2{\theta_0} + \sin^2{\varphi_0}\sh^2{\theta_0} = \sh^2{\theta_0},\\
& c\cos{\varphi_0} + a\sin{\varphi_0} = \cos^2{\varphi_0}\sh{\theta_0} + \sin^2{\varphi_0}\sh{\theta_0} = \sh{\theta_0},\\
& c\sin{\varphi_0} - a\cos{\varphi_0} = \sin{\varphi_0}\cos{\varphi_0}\sh{\theta_0} - \sin{\varphi_0}\cos{\varphi_0}\sh{\theta_0} = 0.
\end{align*}

\begin{align*}
& = 
\end{align*}

\begin{align*}
& = 
\end{align*}

\begin{align*}
& \frac{\sh{\theta_0}h_1^2}{2m^4} = \frac{\sh{\theta_0}\ch^2{\theta_0}}{2\ch^2{(2\theta_0)}},\\
& -t\frac{\sh^2{\theta_0}h_1\th{(2\theta_0)}}{4m} - t\frac{\sh^5{\theta_0}}{2m^3} = t\frac{\sh^3{\theta_0}\ch^2{\theta_0} - \sh^5{\theta_0}}{2\ch{(2\theta_0)}\sqrt{\ch{(2\theta_0)}}} = t\frac{\sh^3{\theta_0}}{2\ch{(2\theta_0)}\sqrt{\ch{(2\theta_0)}}},\\
& -t\frac{\sh^2{\theta_0}\th{(2\theta_0)}}{4m^2} + \frac{\sh^3{\theta_0}}{2m^4} = -t\frac{\sh^2{\theta_0}\sh{(2\theta_0)}}{4\ch^2{(2\theta_0)}} + \frac{\sh^3{\theta_0}}{2\ch^2{(2\theta_0)}} = \frac{(-t\ch{\theta_0}+1)\sh^3{\theta_0}}{2\ch^2{(2\theta_0)}},\\
& \frac{\sh{\theta_0}h_1}{2m^5} = -\frac{\sh{\theta_0}\ch{\theta_0}}{2\ch^2{(2\theta_0)}\sqrt{\ch{(2\theta_0)}}}.
\end{align*}

\begin{align*}
& = \sin{\left( \frac{t}{2}m \right)}\frac{\sh{\theta_0}}{32\ch^2{(2\theta_0)}\sqrt{\ch{(2\theta_0)}}}U(t, \theta_0, \varphi_0),
\end{align*}

\begin{align*}
& U(t,\theta_0,\varphi_0) = 4t\cos{(t\ch{\theta_0})}\cos^2{\left(\frac{t}{2}\sqrt{\ch{(2\theta_0)}}\right)}\ch^4{\theta_0} + 4t\cos{(t\ch{\theta_0})}\cos^2{\left( \frac{t}{2}\sqrt{\ch{(2\theta_0)}} \right)}\ch^2{\theta_0}\left( 3\ch{(2\theta_0)} - 5 \right) + \\
& + \cos{(t\ch{\theta_0})}\left( t\cos^2{\left( \frac{t}{2}\sqrt{\ch{(2\theta_0)}} \right)}(\ch{(2\theta_0)} - 3)^2 + 8\sqrt{\ch{(2\theta_0)}}\sin{\left(t\sqrt{\ch{(2\theta_0)}}\right)} \right) + \\
& + 8\ch{\theta_0}\sin{(t\ch{\theta_0})}\left( 1-\cos{\left(t\sqrt{\ch{(2\theta_0)}} \right)} + t\sqrt{\ch{(2\theta_0)}}\sin{\left(t\sqrt{\ch{(2\theta_0)}} \right)}\sh^2{\theta_0} \right).
\end{align*}

Consider the function $U(t, \theta_0, \varphi_0)$ and simplify the expression.

\begin{align*}
& U(t, \theta_0, \varphi_0)  = 4th_1^4\cos{(th_1)}\cos^2{\left( \frac{t}{2}m \right)} + 4th_1^2(3m^2-5)\cos{(th_1)}\cos^2{\left( \frac{t}{2}m \right)} + t(m^2-3)^2\cos{(th_1)}\cos^2{\left( \frac{t}{2}m \right)} + \\
& + 8m\cos{(th_1)}\sin{(tm)} + 8h_1\sin{(th_1)} - 8h_1\sin{(th_1)}\cos{(tm)} + 8th_1m\sh^2{\theta_0}\sin{(th_1)}\sin{(tm)} = \\
& = t\left( (4h_1^4 + 4h_1^2(3m^2-5) + (m^2-3)^2 )\cos{(th_1)}\cos^2{\left( \frac{t}{2}m \right)} + 8h_1m\sh^2{\theta_0}\sin{(th_1)}\sin{(tm)} \right) + \\
& + 8m\cos{(th_1)}\sin{(tm)}+ 8h_1\sin{(th_1)} - 8h_1\sin{(th_1)}\cos{(tm)}. 
\end{align*}

Since

\begin{align*}
& 4h_1^4 + 4h_1^2(3m^2-5) + (m^2-3)^2 = 4\ch^4{\theta_0} + 4\ch^2{\theta_0}(3\ch{(2\theta_0)} - 5) + (\ch{(2\theta_0)} - 3)^2 = 4\ch^4{\theta_0} + 4\ch^2{\theta_0}(6\ch^2{\theta_0} - 8) + \\
& + (2\ch^2{\theta_0}-4)^2 = 4\ch^4{\theta_0} + 24\ch^4{\theta_0} - 32\ch^2{\theta_0} + 4\ch^4{\theta_0} - 16\ch^2{\theta_0} + 16 = 16( 2\ch^4{\theta_0} - 3\ch^2{\theta_0} + 1 ) = \\
& = 16 (4\ch^4{\theta_0} - 4\ch^2{\theta_0} + 1 - 2\ch^4{\theta_0} + \ch^2{\theta_0}) = 16\left( (2\ch^2{\theta_0} - 1)^2 - \ch^2{\theta_0}(2\ch^2{\theta_0} - 1) \right) = \\
& = 16(2\ch^2{\theta_0} - 1)(2\ch^2{\theta_0} - 1 - \ch^2{\theta_0}) =  16\ch{(2\theta_0)}\sh^2{\theta_0},
\end{align*}

and also

\begin{align*}
& 1-\cos{(tm)} = 1-\left( 1-2\sin^2{\left( \frac{t}{2}m \right)} \right) = 2\sin^2{\left( \frac{t}{2}m \right)},
\end{align*}

we continue simplifying the expression for $U$:

\begin{align*}
& U(t, \theta_0, \varphi_0) = 16m^2\sh^2{\theta_0}t\cos{(th_1)}\cos^2{\left( \frac{t}{2}m \right)} + 16h_1m\sh^2{\theta_0}t\sin{(th_1)}\sin{\left( \frac{t}{2}m \right)}\cos{\left( \frac{t}{2}m \right)} + \\
& + 16m\cos{(th_1)}\sin{\left( \frac{t}{2}m \right)}\cos{\left( \frac{t}{2}m \right)} + 16h_1\sin{(th_1)}\sin^2{\left( \frac{t}{2}m \right)} = \\
& = 16\left[ m\cos{(th_1)}\cos{\left( \frac{t}{2}m \right)}\left( m\sh^2{\theta_0}t\cos{\left( \frac{t}{2}m \right)} + \cos{\left( \frac{t}{2}m \right)} \right) + \right. \\
& + \left. h_1\sin{(th_1)}\sin{\left( \frac{t}{2}m \right)}\left( m\sh^2{\theta_0}t\cos{\left( \frac{t}{2}m \right)} + \sin{\left( \frac{t}{2}m \right)} \right) \right] = \\
& = 16\left( m\sh^2{\theta_0}t\cos{\left( \frac{t}{2}m \right)} + \sin{\left( \frac{t}{2}m \right)} \right) \left( m\cos{(th_1)}\cos{\left( \frac{t}{2}m \right)} + h_1\sin{(th_1)}\sin{\left( \frac{t}{2}m \right)} \right).
\end{align*}

\end{proof}

\begin{corollary}
\label{sopr_tochki_tk}
We have a series of conjugate points $t_k = (2k\pi)/\sqrt{\ch{(2\theta_0)}}$, $k=1, 2, ...$.
\end{corollary}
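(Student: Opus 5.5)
The plan is to read the conjugate times directly off the factorization of the Jacobian determinant in Proposition \ref{det_vych}. Conjugate points along the normal extremal with parameters $(\theta_0,\varphi_0)$ are the critical points of the exponential map $(t,\theta_0,\varphi_0)\mapsto (y_1,x_2,y_2)$ in the chart around $\Id$, so they are the positive zeros in $t$ of that determinant. The determinant is a product of four factors:
\begin{itemize}
\item the scalar prefactor $\sh\theta_0/(2\ch^2(2\theta_0)\sqrt{\ch(2\theta_0)})$;
\item $\sin\left(\frac t2\sqrt{\ch(2\theta_0)}\right)$;
\item the bracket $\sqrt{\ch(2\theta_0)}\sh^2\theta_0\, t\cos(\cdot)+\sin(\cdot)$;
\item the bracket involving $\cos(t\ch\theta_0)$.
\end{itemize}
The first step is to isolate the factor $\sin\left(\frac t2\sqrt{\ch(2\theta_0)}\right)$. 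It vanishes exactly at $t=2k\pi/\sqrt{\ch(2\theta_0)}$, $k\in\mathbb N$. Hence for $\theta_0\neq 0$, where the prefactor $\sh\theta_0$ is nonzero, the Jacobian vanishes at every $t_k$.

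Second, I would check that these zeros are genuine conjugate points and not an artifact of the chart. At $t=t_k$ we have $\sin(t m/2)=0$ and $\cos(t m/2)=\pm1$. Substituting into the formulas for the extremal, $\alpha_2=\alpha_3=0$, so $y_1=x_2=0$ and $(x_1,y_2)=\pm(\cos(h_1t),\sin(h_1t))$. The endpoint is therefore $\pm\operatorname{diag}(e^{ih_1t},e^{-ih_1t})$. This is a point independent of $\varphi_0$, reached by the whole one-parameter family of extremals obtained by varying $\varphi_0$. This is the classical mechanism: the rotational symmetry in $\varphi_0$ collapses at these times, which explains the factor. It is consistent with $\partial_{\varphi_0}y_1=\partial_{\varphi_0}x_2=0$ there, which makes the third column of the Jacobian vanish identically. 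This gives a clean, computation-light confirmation that $\operatorname{rank}$ drops, independent of the long determinant expansion. The resulting family of conjugate points is the series $t_k$, $k=1,2,\dots$.

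The main obstacle is the point of the chart. The chart $(y_1,x_2,y_2)$ is only valid near $x_1>0$, and if $x_1(t_k)=0$ the vanishing of this particular minor would not by itself prove degeneracy of the full differential. The vanishing of the $\varphi_0$-column settles this, since it shows $\partial_{\varphi_0}q(t_k)=0$ in all four coordinates: $\partial_{\varphi_0}x_1=\partial_{\varphi_0}y_2=0$ identically. So the differential of the exponential map into $\SU(2)$ is degenerate regardless of the chart.

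The degenerate case $\theta_0=0$ must be excluded separately. There $a=c=0$, the extremal is the one-parameter subgroup generated by $X_1$, and the $\varphi_0$ parametrization is singular.
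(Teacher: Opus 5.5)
Your first step is exactly the paper's proof. The paper only observes that $\sin\bigl(\tfrac t2\sqrt{\cosh(2\theta_0)}\bigr)$ is an explicit factor of the determinant in Proposition~\ref{det_vych}, and that $\sinh\theta_0\neq 0$ away from the singular line $\theta_0=0$.

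Your second step is a genuinely different and more robust argument. You have $\partial_{\varphi_0}x_1=\partial_{\varphi_0}y_2\equiv 0$, while $\partial_{\varphi_0}y_1$ and $\partial_{\varphi_0}x_2$ are combinations of $\alpha_2$ and $\alpha_3$, both of which carry the factor $\sin(tm/2)$. So the whole vector $\partial_{\varphi_0}q$ vanishes at $t_k$, and the differential of the exponential map into $\SU(2)$ is degenerate. This uses no chart and none of the long expansion behind Proposition~\ref{det_vych}. Equivalently, take the factorization $q(t)=e^{-\frac{i}{2}tA}\,e^{ih_1t\sigma_1}$ from the proof of Theorem~\ref{norm_traj_form}. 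Its first factor is $\cos(tm/2)E-\frac{i}{m}\sin(tm/2)A$, which equals $(-1)^kE$ at $t=t_k$ for every $\varphi_0$, so the whole circle of extremals with fixed $\theta_0$ refocuses.

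The paper's route yields the full factorization, which it uses afterwards to locate the first conjugate time, but it depends on that long computation and on the chosen chart. Yours is intrinsic. Your worry about the chart is also not hypothetical. The columns of the $4\times 3$ Jacobian are tangent to $S^3$, so the $(y_1,x_2,y_2)$-minor equals $\pm x_1$ times the intrinsic Jacobian. Indeed the paper's factor $g(t)=m\alpha_1+h_1\alpha_2$ is exactly $\sqrt{\cosh(2\theta_0)}\,x_1(t)$, i.e.\ precisely such a chart effect.

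One slip needs fixing: the endpoint is not diagonal. At $t_k$ you have $y_1=x_2=0$, $x_1=(-1)^k\cos(h_1t_k)$ and $y_2=(-1)^k\sin(h_1t_k)$. Since $z_1=x_1+iy_1$ and $z_2=x_2+iy_2$, this gives
\[
q(t_k)=(-1)^k\begin{pmatrix}\cos(h_1t_k) & i\sin(h_1t_k)\\ i\sin(h_1t_k) & \cos(h_1t_k)\end{pmatrix}=(-1)^k e^{ih_1t_k\sigma_1}.
\]
This is still independent of $\varphi_0$, so your argument is unaffected.
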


\begin{proof}
Since the factor $\sin{\left( \frac{t}{2}\sqrt{\ch{(2\theta_0)}} \right)}$ is explicitly isolated (as well as the factor $\sh{\theta_0}$, but the line $\theta_0 = 0$ does not interest us, since it is a singular set of the parameterization of the conjugate variables), at the points $t_k = (2k\pi)/\sqrt{\ch{(2\theta_0)}}$, $k=1, 2, ...$, the determinant vanishes.
\end{proof}

\begin{corollary}
Let
\begin{align*}
&   f(t) = \sqrt{\ch{(2\theta_0)}}\sh^2{\theta_0}t\cos{\left( \frac{t}{2}\sqrt{\ch{(2\theta_0)}} \right)} + \sin{\left( \frac{t}{2}\sqrt{\ch{(2\theta_0)}} \right)}; \\
& g(t) = \sqrt{\ch{(2\theta_0)}}\cos{(t\ch{\theta_0})}\cos{\left( \frac{t}{2}\sqrt{\ch{(2\theta_0)}} \right)} + \ch{\theta_0}\sin{(t\ch{\theta_0})}\sin{\left( \frac{t}{2}\sqrt{\ch{(2\theta_0)}} \right)}.
\end{align*}

Then the first conjugate time for $\ch{\theta_0} \geqslant \frac{3}{\sqrt{2}}$ is the zero of the function $f(t)$; for $1 < \ch{\theta_0} < \frac{3}{\sqrt{2}}$, the first conjugate time is the zero of the function $f(t)$ or the zero of the function $g(t)$.
\end{corollary}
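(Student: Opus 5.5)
The plan is to read everything off the factorization in Proposition~\ref{det_vych}. For $\theta_0\neq 0$ the prefactor $\frac{\sh{\theta_0}}{2\ch^2{(2\theta_0)}\sqrt{\ch{(2\theta_0)}}}$ does not vanish, so the conjugate times are exactly the positive zeros of
$$
\sin\left(\tfrac{t}{2}m\right)\, f(t)\, g(t), \qquad m=\sqrt{\ch{(2\theta_0)}} .
$$
The first conjugate time is therefore the smallest of three numbers: the first positive zero $2\pi/m$ of the sine factor (Corollary~\ref{sopr_tochki_tk}), the first positive zero $t_f$ of $f$, and the first positive zero $t_g$ of $g$. Throughout I would use the rescaled time $\tau = mt/2$.

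\textbf{Step 1: the sine factor is never first.} In terms of $\tau$, the equation $f=0$ reads $\tan\tau = -2\sh^2{\theta_0}\,\tau$. We have $f(0)=0$, but $f'(0)>0$, so $f>0$ just after $0$. Moreover $f>0$ on $(0,\pi/2]$, since both of its terms are nonnegative there and $\sin\tau>0$. At $\tau=\pi$ one gets $f=-2\pi\sh^2{\theta_0}<0$. Hence the first positive zero satisfies $\tau_f\in(\pi/2,\pi)$, that is, $t_f<2\pi/m$. This already yields the second claim: for every $\ch{\theta_0}>1$ the first conjugate time is $\min(t_f,t_g)$.

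\textbf{Step 2: for $\ch{\theta_0}\ge 3/\sqrt2$, $g$ has no zero on $(0,t_f]$.} Put $k = 2\ch{\theta_0}/m$, so that $t\ch{\theta_0} = k\tau$. The product-to-sum formulas give
$$
g = A\cos((k-1)\tau) + B\cos((k+1)\tau),\qquad A=\tfrac{m+\ch{\theta_0}}{2},\quad B=\tfrac{m-\ch{\theta_0}}{2}.
$$
Since $m^2-\ch^2{\theta_0}=\sh^2{\theta_0}>0$, we have $0<B<A$. From $m\le\sqrt2\,\ch{\theta_0}$ we get $B/A\le(\sqrt2-1)/(\sqrt2+1)<0.18$.

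The hypothesis $\ch^2{\theta_0}\ge 9/2$ gives $k^2 = 4\ch^2{\theta_0}/(2\ch^2{\theta_0}-1)\le 9/4$, i.e.\ $k\le 3/2$, and it also gives $\sh^2{\theta_0}\ge 7/2$.

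Next I would sharpen the bound on $\tau_f$. For $\tau\in(\pi/2,\pi)$, the root of $\tan\tau=-7\tau$ or of a steeper line lies close to $\pi/2$. Writing $\tau=\pi/2+\delta$ turns the equation into $\cot\delta = 2\sh^2{\theta_0}\,\tau \ge 7\pi/2$, which gives $\delta<2/(7\pi)$. So $\tau_f<0.6\pi$, and hence $(k-1)\tau\le 0.3\pi$ on $[0,\tau_f]$. Then
$$
g\ge A\cos(0.3\pi)-B \ge A(0.58-0.18)>0
$$
on $[0,\tau_f]$. Consequently $t_g>t_f$, and the first conjugate time is the zero of $f$.

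The main obstacle is Step~2. It is the only place where quantitative estimates are needed: the bound on $\tau_f$ and the positivity of $g$. The constants must be kept consistent, so I would verify numerically that $\cos(0.3\pi)>B/A$ with margin. If this crude argument turns out too weak near $\ch{\theta_0}=3/\sqrt2$, I would instead keep the exact term $B\cos((k+1)\tau)$ and split $[0,\tau_f]$ according to the sign of $\cos((k+1)\tau)$.

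For the range $1<\ch{\theta_0}<3/\sqrt2$ no comparison between $t_f$ and $t_g$ is claimed, so Step~1 is enough.
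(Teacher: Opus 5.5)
Your argument is correct. For the case $\cosh\theta_0\ge 3/\sqrt2$, however, it takes a different route from the paper.

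Your handling of $f$ matches the paper's: $f>0$ on $(0,\pi/m]$ and $f(2\pi/m)=-2\pi\sinh^2\theta_0<0$. The paper adds $f'<0$ on $(\pi/m,2\pi/m)$ to get uniqueness, which the statement does not need. You are also right that the clause for $1<\cosh\theta_0<3/\sqrt2$ follows immediately from $t_f<2\pi/m$; the paper goes further there and shows that $g$ has exactly one zero in $(0,2\pi/m)$.

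For $\cosh\theta_0\ge 3/\sqrt2$ the paper proves the stronger fact that $g>0$ on all of $[0,2\pi/m)$, by a sign chart:
\begin{itemize}
\item it cuts the interval at $\pi/(2\cosh\theta_0)$, $\pi/m$, $\pi/\cosh\theta_0$, $3\pi/(2\cosh\theta_0)$ and reads off the signs of the four trigonometric factors;
\item it uses $g'(t)=-\tfrac{m\cosh\theta_0}{2}\sin(t\cosh\theta_0)\cos(tm/2)+\tfrac12\cos(t\cosh\theta_0)\sin(tm/2)$ to get monotonicity on the two remaining pieces;
\item it evaluates $g(2\pi/m)=-m\cos(2\pi\cosh\theta_0/m)$, whose sign changes exactly at $\cosh\theta_0=3/\sqrt2$.
\end{itemize}
You instead prove positivity only where it is needed, on $[0,t_f]$. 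You use the product-to-sum form $g=A\cos((k-1)\tau)+B\cos((k+1)\tau)$ with $B/A<3-2\sqrt2$, together with the localization $\tau_f<\pi/2+2/(7\pi)$. The constants check out with room to spare.

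What each approach buys:
\begin{itemize}
\item The paper's qualitative argument gives sharp information about $g$ itself: $3/\sqrt2$ is exactly where $g$ acquires a zero before $2\pi/m$. That is the kind of information one needs to compare $t_f$ and $t_g$ in the other regime.
\item Yours is shorter and avoids the case analysis of $g'$. Because it has slack, it also says more about $f$. At $\cosh\theta_0=3/\sqrt2$ you have $\cos((k-1)\tau)\ge 0.67$ on $[0,\tau_f]$, against $B/A=1/7$. So the same estimate with slightly weaker constants shows that $t_f$ is still the first conjugate time for some $\cosh\theta_0$ below $3/\sqrt2$. The threshold in the statement is therefore not sharp for the conclusion about $f$.
\end{itemize}
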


\begin{proof}
The functions $f(t)$ and $g(t)$ are factors of the expression for the determinant of the Jacobian matrix of the exponential map obtained in Proposition \ref{det_vych}. In this statement, we show that the first conjugate time is a zero of one of these functions, and not the first point $t_1$ from Corollary \ref{sopr_tochki_tk}. That is, we are looking for zeros of the functions $f(t)$ and $g(t)$ belonging to the interval $\left(0, \frac{2\pi}{\sqrt{\ch{(2\theta_0)}}} \right)$.

Let us find the points where the factors vanish.

\begin{itemize}
    \item[1)] 
    $$
\sqrt{\ch{(2\theta_0)}}\sh^2{\theta_0}t\cos{\left( \frac{t}{2}\sqrt{\ch{(2\theta_0)}} \right)} = - \sin{\left( \frac{t}{2}\sqrt{\ch{(2\theta_0)}} \right)}.
    $$
    
    The case $\theta_0 = 0$ does not interest us, but we obtain the following zeros:

        $$
        0 = \sin{\left( \frac{t}{2} \right)}
        $$

        $$
        t = 2\pi k, \ k \in \mathbb{Z}.
        $$
        
    Let $\theta_0 \neq 0$.

    Consider the function $$
   f(t) = \sqrt{\ch{(2\theta_0)}}\sh^2{\theta_0}t\cos{\left( \frac{t}{2}\sqrt{\ch{(2\theta_0)}} \right)} + \sin{\left( \frac{t}{2}\sqrt{\ch{(2\theta_0)}} \right)}.
    $$ 

    Some of its values:
    $f(0) = 0$; $f\left( \frac{\pi}{\sqrt{\ch{(2\theta_0)}}} \right) = 1 > 0$; $f\left( \frac{2\pi}{\sqrt{\ch{(2\theta_0)}}} \right) = -2\pi \sh^2{\theta_0} < 0$.
    
    Since $\cos{\left( \frac{t}{2}\sqrt{\ch{(2\theta_0)}} \right)} > 0$ and $\sin{\left( \frac{t}{2}\sqrt{\ch{(2\theta_0)}} \right)} > 0$ for $t \in (0, \frac{\pi}{\sqrt{\ch{(2\theta_0)}}})$, then $f(t) > 0$ on this interval (even on the half-interval, since the right boundary can be included), that is, there are no zeros on this half-interval.

    By continuity of the function $f(t)$ on the segment $t \in \left[ \frac{\pi}{\sqrt{\ch{(2\theta_0)}}},\frac{2\pi}{\sqrt{\ch{(2\theta_0)}}} \right]$, and also taking into account its values at the endpoints, there is at least one zero on the interval $t \in \left( \frac{\pi}{\sqrt{\ch{(2\theta_0)}}}, \frac{2\pi}{\sqrt{\ch{(2\theta_0)}}} \right)$. Let us show that it will be unique, using the monotonicity of the function $f(t)$ on this interval. Indeed,

    \begin{align*}
    & f'(t) = \sqrt{\ch{(2\theta_0)}}\sh^2{\theta_0}\cos{\left( \frac{t}{2}\sqrt{\ch{(2\theta_0)}} \right)} - \sqrt{\ch{(2\theta_0)}}\sh^2{\theta_0}t\sin{\left( \frac{t}{2}\sqrt{\ch{(2\theta_0)}} \right)} + \frac{\sqrt{\ch{(2\theta_0)}}}{2}\cos{\left( \frac{t}{2}\sqrt{\ch{(2\theta_0)}} \right)} = \\
    & = \frac{\ch^{3/2}{(2\theta_0)}}{2}\cos{\left( \frac{t}{2}\sqrt{\ch{(2\theta_0)}} \right)} - \sqrt{\ch{(2\theta_0)}}\sh^2{\theta_0}t\sin{\left( \frac{t}{2}\sqrt{\ch{(2\theta_0)}} \right)}.
    \end{align*}

    Since $\cos{\left( \frac{t}{2}\sqrt{\ch{(2\theta_0)}} \right)} < 0$ and $\sin{\left( \frac{t}{2}\sqrt{\ch{(2\theta_0)}} \right)} > 0$ for $t \in \left(\frac{\pi}{\sqrt{\ch{(2\theta_0)}}}, \frac{2\pi}{\sqrt{\ch{(2\theta_0)}}} \right)$, then $f'(t) < 0$ on this interval ($f(t)$ is decreasing).

    Thus, the function $f(t)$ has a unique zero on the interval $\left(0, \frac{2\pi}{\sqrt{\ch{(2\theta_0)}}} \right)$.

    \item[2)]
    $$
    \sqrt{\ch{(2\theta_0)}}\cos{(t\ch{\theta_0})}\cos{\left( \frac{t}{2}\sqrt{\ch{(2\theta_0)}} \right)} = - \ch{\theta_0}\sin{(t\ch{\theta_0})}\sin{\left( \frac{t}{2}\sqrt{\ch{(2\theta_0)}} \right)}. 
    $$
    
        The case $\theta_0 = 0$ does not interest us, but we obtain the following zeros:

        $$
        \cos{t}\cos{\left( \frac{t}{2} \right)} + \sin{t}\sin{\left( \frac{t}{2} \right)} = 0
        $$

        $$
        \cos{\frac{t}{2}} = 0
        $$

        $$
        \frac{t}{2} = \frac{\pi}{2} + \pi k, \ k \in \mathbb{Z}
        $$

        $$
        t = \pi + 2\pi k, \ k \in \mathbb{Z}.
        $$
        
    Let $\theta_0 \neq 0$.

    Consider the function 
    $$
    g(t) = \sqrt{\ch{(2\theta_0)}}\cos{(t\ch{\theta_0})}\cos{\left( \frac{t}{2}\sqrt{\ch{(2\theta_0)}} \right)} + \ch{\theta_0}\sin{(t\ch{\theta_0})}\sin{\left( \frac{t}{2}\sqrt{\ch{(2\theta_0)}} \right)} = m\alpha_1(t) + h_1\alpha_2(t).
    $$

    Some of its values: $g(0) = \sqrt{\ch{(2\theta_0)}} > 0$; 
    
    $g\left( \frac{\pi}{\sqrt{\ch{(2\theta_0)}}} \right) = \ch{\theta_0}\sin{\left( \frac{\pi \ch{\theta_0}}{\sqrt{\ch{(2\theta_0)}}} \right)} > 0$; 
    
    $g\left(\frac{\pi}{2\ch{\theta_0}}\right) = \ch{\theta_0}\sin{\left( \frac{\pi \sqrt{\ch{(2\theta_0)}}}{4\ch{\theta_0}} \right)} > 0$;
    
    $g\left( \frac{\pi}{\ch{\theta_0}} \right) = -\sqrt{\ch{(2\theta_0)}}\cos{\left( \frac{\pi \sqrt{\ch{(2\theta_0)}}}{2\ch{\theta_0}} \right)} > 0$; 
    
    $g\left( \frac{2\pi}{\sqrt{\ch{(2\theta_0)}}} \right) = -\sqrt{\ch{(2\theta_0)}}\cos{\left( \frac{2\pi \ch{\theta_0}}{\sqrt{\ch{(2\theta_0)}}} \right)} > 0$ for $\ch{\theta_0} > \frac{3}{\sqrt{2}}$, $ < 0$ for $1 < \ch{\theta_0} < \frac{3}{\sqrt{2}}$, $= 0$ for $\ch{\theta_0} = \frac{3}{\sqrt{2}}$; 
    
    $g\left( \frac{3\pi}{2\ch{\theta_0}} \right) = -\ch{\theta_0}\sin{\left( \frac{3\pi \sqrt{\ch{(2\theta_0)}}}{4\ch{\theta_0}} \right)}> 0$ for $\ch{\theta_0} > \frac{3}{\sqrt{2}}$, $ < 0$ for $1 < \ch{\theta_0} < \frac{3}{\sqrt{2}}$, $= 0$ for $\ch{\theta_0} = \frac{3}{\sqrt{2}}$.

    Note that the following inequalities hold:
    $$
    \frac{\sqrt{2}}{2} < \frac{\ch{\theta_0}}{\sqrt{\ch{(2\theta_0)}}} < 1 \Leftrightarrow 1 < \frac{\sqrt{\ch{(2\theta_0)}}}{\ch{\theta_0}} < \sqrt{2}, \quad \theta_0 \neq 0.
    $$
    moreover, equality to $1$ is achieved only for $\theta_0 = 0$ (a case that does not interest us).

    From these inequalities, we can draw conclusions about the following arrangement of points for all $\theta_0 \neq 0$:
    $$
    0 < \frac{\pi}{2\sqrt{\ch{(2\theta_0)}}} < \frac{\pi}{2\ch{\theta_0}} < \frac{\pi}{\sqrt{\ch{(2\theta_0)}}} < \frac{\pi}{\ch{\theta_0}} < \frac{2\pi}{\sqrt{\ch{(2\theta_0)}}} < \frac{2\pi}{\ch{\theta_0}}.
    $$

    Since $\frac{3}{2}\sqrt{2} > 2$ and the following inequalities hold:

    $$
    \frac{3\pi}{2\sqrt{\ch{(2\theta_0)}}} < \frac{3\pi}{2\ch{\theta_0}} < \frac{3\pi\sqrt{2}}{2\sqrt{\ch{(2\theta_0)}}},
    $$

    depending on $\theta_0$, $\frac{3\pi}{2\ch{\theta_0}}$ can be either greater than $\frac{2\pi}{\sqrt{\ch{(2\theta_0)}}}$ or less than $\frac{2\pi}{\sqrt{\ch{(2\theta_0)}}}$, or equal to it. Namely, for $1 < \ch{\theta_0} < \frac{3}{\sqrt{2}}$ we have $\frac{3\pi}{2\ch{\theta_0}} < \frac{2\pi}{\sqrt{\ch{(2\theta_0)}}}$, for $\frac{3}{\sqrt{2}} < \ch{\theta_0}$ we have $\frac{3\pi}{2\ch{\theta_0}} > \frac{2\pi}{\sqrt{\ch{(2\theta_0)}}}$, and for $\ch{\theta_0} = \frac{3}{\sqrt{2}}$ we have equality.

    $\sin{(t\ch{\theta_0})}$ is positive on the interval $(0,\frac{\pi}{\ch{\theta_0}})$, negative on the interval $\left(\frac{\pi}{\ch{\theta_0}}, \frac{2\pi}{\ch{\theta_0}}\right)$;

    $\sin{\left( \frac{t}{2}\sqrt{\ch{(2\theta_0)}} \right)}$ is positive on the interval $\left(0,\frac{2\pi}{\sqrt{\ch{(2\theta_0)}}}\right)$;
    
    $\cos{(t\ch{\theta_0})}$ is positive on the intervals $\left(0,\frac{\pi}{2\ch{\theta_0}}\right)$ and $\left(\frac{3\pi}{2\ch{\theta_0}},
    \frac{2\pi}{\ch{\theta_0}}\right)$, negative on the interval $\left(\frac{\pi}{2\ch{\theta_0}},\frac{3\pi}{2\ch{\theta_0}}\right)$;

    $\cos{\left( \frac{t}{2}\sqrt{\ch{(2\theta_0)}} \right)}$ is positive on the interval $\left(0, \frac{\pi}{\sqrt{\ch{(2\theta_0)}}}\right)$, negative on the interval $\left(\frac{\pi}{\sqrt{\ch{(2\theta_0)}}}, \frac{2\pi}{\sqrt{\ch{(2\theta_0)}}}\right)$.

    Hence, the product $\sin{(t\ch{\theta_0})}\sin{\left( \frac{t}{2}\sqrt{\ch{(2\theta_0)}} \right)}$ is positive on the interval $\left(0, \frac{\pi}{\ch{\theta_0}}\right)$, negative on the interval $\left( \frac{\pi}{\ch{\theta_0}}, \frac{2\pi}{\sqrt{\ch{(2\theta_0)}}} \right)$;

    the product $\cos{(t\ch{\theta_0})}\cos{\left( \frac{t}{2}\sqrt{\ch{(2\theta_0)}} \right)}$ is positive on the intervals $\left(0, \frac{\pi}{2\ch{\theta_0}}\right)$ and $\left( \frac{\pi}{\sqrt{\ch{(2\theta_0)}}}, \frac{3\pi}{2\ch{\theta_0}} \right)$ (or on the interval $\left( \frac{\pi}{\sqrt{\ch{(2\theta_0)}}}, \frac{2\pi}{\sqrt{\ch{(2\theta_0)}}} \right)$), negative on the intervals $\left( \frac{\pi}{2\ch{\theta_0}}, \frac{\pi}{\sqrt{\ch{(2\theta_0)}}}  \right)$ and $\left( \frac{3\pi}{2\ch{\theta_0}}, \frac{2\pi}{\sqrt{\ch{(2\theta_0)}}} \right)$ (or the second interval does not exist).

    Thus, the function $g(t)$ is positive on the intervals $\left( 0, \frac{\pi}{2\ch{\theta_0}} \right)$ and $\left( \frac{\pi}{\sqrt{\ch{(2\theta_0)}}}, \frac{\pi}{\ch{\theta_0}} \right)$, negative on the interval $\left( \frac{3\pi}{2\ch{\theta_0}}, \frac{2\pi}{\sqrt{\ch{(2\theta_0)}}} \right)$ (if the corresponding inequality holds).

    \begin{align*}
    & g'(t) = -\sqrt{\ch{(2\theta_0)}}\ch{\theta_0}\sin{(t\ch{\theta_0})}\cos{\left( \frac{t}{2}\sqrt{\ch{(2\theta_0)}} \right)} - \frac{\ch{(2\theta_0)}}{2}\cos{(t\ch{\theta_0})}\sin{\left( \frac{t}{2}\sqrt{\ch{(2\theta_0)}} \right)} + \\
    & + \ch^2{\theta_0}\cos{(t\ch{\theta_0})}\sin{\left( \frac{t}{2}\sqrt{\ch{(2\theta_0)}} \right)} + \frac{\ch{\theta_0}\sqrt{\ch{(2\theta_0)}}}{2}\sin{(t\ch{\theta_0})}\cos{\left( \frac{t}{2}\sqrt{\ch{(2\theta_0)}} \right)} = \\
    & = -\frac{\ch{\theta_0}\sqrt{\ch{(2\theta_0)}}}{2}\sin{(t\ch{\theta_0})}\cos{\left( \frac{t}{2}\sqrt{\ch{(2\theta_0)}} \right)} + \frac{1}{2}\cos{(t\ch{\theta_0})}\sin{\left( \frac{t}{2}\sqrt{\ch{(2\theta_0)}} \right)}.
    \end{align*}

    The product $-\sin{(t\ch{\theta_0})}\cos{\left( \frac{t}{2}\sqrt{\ch{(2\theta_0)}} \right)}$ is positive on the interval $\left( \frac{\pi}{\sqrt{\ch{(2\theta_0)}}}, \frac{\pi}{\ch{\theta_0}}
    \right)$, negative on the intervals $\left( 0, \frac{\pi}{\sqrt{\ch{(2\theta_0)}}} \right)$ and $\left( \frac{\pi}{\ch{\theta_0}}, \frac{2\pi}{\sqrt{\ch{(2\theta_0)}}} \right)$;

    the product $\cos{(t\ch{\theta_0})}\sin{\left( \frac{t}{2}\sqrt{\ch{(2\theta_0)}} \right)}$ is positive on the intervals $\left( 0, \frac{\pi}{2\ch{\theta_0}} \right)$ and $\left( \frac{3\pi}{2\ch{\theta_0}}, \frac{2\pi}{\sqrt{\ch{(2\theta_0)}}} \right)$, negative on the interval $\left( \frac{\pi}{2\ch{\theta_0}}, \frac{3\pi}{2\ch{\theta_0}} \right)$.

    Thus, $g'(t)$ is negative on the intervals $\left( \frac{\pi}{2\ch{\theta_0}}, \frac{\pi}{\sqrt{\ch{(2\theta_0)}}} \right)$ and $\left( \frac{\pi}{\ch{\theta_0}}, \frac{3\pi}{2\ch{\theta_0}} \right)$. On the remaining intervals, it may change sign.

    Some of its values: $g'(0) = 0$; 
    
    $g'\left( \frac{\pi}{\sqrt{\ch{(2\theta_0)}}} \right) = \frac{1}{2}\cos{\left( \frac{\pi \ch{\theta_0}}{\sqrt{\ch{(2\theta_0)}}} \right)} < 0$; 
    
    $g'\left(\frac{\pi}{2\ch{\theta_0}}\right) = -\frac{\ch{\theta_0}\sqrt{\ch{(2\theta_0)}}}{2}\cos{\left( \frac{\pi \sqrt{\ch{(2\theta_0)}}}{4\ch{\theta_0}} \right)} < 0$;
    
    $g'\left( \frac{\pi}{\ch{\theta_0}} \right) = -\frac{1}{2}\sin{\left( \frac{\pi \sqrt{\ch{(2\theta_0)}}}{2\ch{\theta_0}} \right)} < 0$; 
    
    $g'\left( \frac{2\pi}{\sqrt{\ch{(2\theta_0)}}} \right) = \frac{\ch{\theta_0}\sqrt{\ch{(2\theta_0)}}}{2}\sin{\left( \frac{2\pi \ch{\theta_0}}{\sqrt{\ch{(2\theta_0)}}} \right)} < 0$ ; 
    
    $g'\left( \frac{3\pi}{2\ch{\theta_0}} \right) = \frac{\ch{\theta_0}\sqrt{\ch{(2\theta_0)}}}{2}\cos{\left( \frac{3\pi \sqrt{\ch{(2\theta_0)}}}{4\ch{\theta_0}} \right)} < 0$.

    Consider three cases:

    \begin{itemize}
        \item[-] $\ch{\theta_0} > \frac{3}{\sqrt{2}}$.

        On the segments $\left[ 0, \frac{\pi}{2\ch{\theta_0}}\right]$ and $\left[ \frac{\pi}{\sqrt{\ch{(2\theta_0)}}}, \frac{\pi}{\ch{\theta_0}} \right]$, the function $g(t)$ is positive, and on the two remaining segments $\left[ \frac{\pi}{2\ch{\theta_0}}, \frac{\pi}{\sqrt{\ch{(2\theta_0)}}} \right]$ and $\left[  \frac{\pi}{\ch{\theta_0}}, \frac{2\pi}{\sqrt{\ch{(2\theta_0)}}}\right]$, the function is strictly decreasing. Since at the endpoints of the segments the function is positive, one can conclude that it is positive on them. Hence, there are no roots on the entire segment $\left[0, \frac{2\pi}{\sqrt{\ch{(2\theta_0)}}} \right]$.

        \item[-] $1 < \ch{\theta_0} < \frac{3}{\sqrt{2}}$.

        On the segments $\left[ 0, \frac{\pi}{2\ch{\theta_0}}\right]$ and $\left[ \frac{\pi}{\sqrt{\ch{(2\theta_0)}}}, \frac{\pi}{\ch{\theta_0}} \right]$, the function $g(t)$ is positive, on the segment $\left[ \frac{3\pi}{2\ch{\theta_0}}, \frac{2\pi}{\sqrt{\ch{(2\theta_0)}}} \right]$ it is negative, and on the two remaining segments $\left[ \frac{\pi}{2\ch{\theta_0}}, \frac{\pi}{\sqrt{\ch{(2\theta_0)}}} \right]$ and $\left[  \frac{\pi}{\ch{\theta_0}}, \frac{2\pi}{\sqrt{\ch{(2\theta_0)}}}\right]$, the function is strictly decreasing. Since at the endpoints of the segment $\left[ \frac{\pi}{2\ch{\theta_0}}, \frac{\pi}{\sqrt{\ch{(2\theta_0)}}} \right]$ the function is positive, one can conclude that it is positive on it. At the endpoints of the segment $\left[  \frac{\pi}{\ch{\theta_0}}, \frac{2\pi}{\sqrt{\ch{(2\theta_0)}}}\right]$, the function takes values of different signs, i.e., we have exactly one root on this interval. Hence, there is exactly one root on the entire segment $\left[0, \frac{2\pi}{\sqrt{\ch{(2\theta_0)}}} \right]$.

        \item[-] $\ch{\theta_0} = \frac{3}{\sqrt{2}}$.
        The only difference from the two previous cases is that for this value of $\ch{\theta_0}$, $\frac{3\pi}{2\ch{\theta_0}} = \frac{2\pi}{\sqrt{\ch{(2\theta_0)}}}$ and $g(t)$ vanishes at this point. Thus, the right endpoint of the segment $\left[0, \frac{2\pi}{\sqrt{\ch{(2\theta_0)}}} \right]$ is the unique zero of the function $g(t)$.
    \end{itemize}
\end{itemize}

Thus, the first conjugate time for $\ch{\theta_0} \geqslant \frac{3}{\sqrt{2}}$ is the zero of the function $f(t)$.
    
For $1 < \ch{\theta_0} < \frac{3}{\sqrt{2}}$, the first conjugate time is the zero of the function $f(t)$ or the zero of the function $g(t)$.

\end{proof}


\subsection{One-Parameter Subgroups}

\begin{proposition}
    \label{odnopar}
    Normal extremal trajectories are one-parameter subgroups only for $\theta_0 = 0$.
    \end{proposition}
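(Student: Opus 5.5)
The plan is to use the representation of normal extremal trajectories from the proof of Theorem \ref{norm_traj_form}: $q(t) = e^{-tf}\circ e^{t(f+g)}(q_0)$ with $f = 2h_1X_1$ and $g = V_0$. In matrix form this reads
$$
q(t) = q_0\, e^{t\,\xi}\, e^{-t\,\eta},
$$
where $e^{t\xi}$ is the flow of $f+g$ (right multiplication by $\exp(-\frac{it}{2}A)$) and $e^{-t\eta}$ is the flow of $-f$ (right multiplication by $\exp(ih_1t\sigma_1)$). A trajectory through $q_0=\Id$ is a one-parameter subgroup iff $q(t)=\exp(tY)$ for some fixed $Y\in\su(2)$. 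Differentiating at $t=0$ forces $Y = \dot q(0) = V_0 = g$. So the statement reduces to the following claim: $q(t) \equiv \exp(tV_0)$ holds iff $\theta_0=0$.

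Equivalently, I would work at the level of the control. A trajectory $\dot q = q\,U(t)$ with $U(t) = -\frac{i}{2}(-h_1\sigma_1+h_2\sigma_2+h_3\sigma_3)$ is a one-parameter subgroup $q_0\exp(tY)$ iff $U(t)\equiv Y$ is constant. This holds because $\dot q = q_0\exp(tY)Y$ gives $U = Y$ identically. Hence the trajectory is a one-parameter subgroup iff $h_2(t), h_3(t)$ are constant. From the vertical subsystem computed above, $h_2 = \cos(-2t\ch\theta_0+\varphi_0)\sh\theta_0$ and $h_3 = \sin(-2t\ch\theta_0+\varphi_0)\sh\theta_0$. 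Since $\ch\theta_0\ge 1>0$, the phase genuinely rotates, so these are constant iff $\sh\theta_0=0$, i.e.\ $\theta_0=0$.

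Conversely, for $\theta_0=0$ we have $h_1=-1$ and $h_2=h_3=0$, so $V_t = X_1$ is constant and $q(t)=q_0\exp(-\frac{it}{2}\sigma_1 t)$. This is exactly the periodic trajectory of Proposition \ref{prop_periodic_traj}, hence a one-parameter subgroup. As a consistency check, the explicit formulas in Theorem \ref{norm_traj_form} with $a=c=0$ and $m=1$ reduce to $\alpha_1-\alpha_2=\cos(t/2)$ and similar expressions, i.e.\ $\exp(-\frac{it}{2}\sigma_1)$.

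The only real subtlety is the claim that a one-parameter subgroup forces $U(t)$ to be constant. Here "one-parameter subgroup" must be understood for the natural time parametrization $t$ on the level $H=-\frac12$, or up to a left translation $q_0\exp(tY)$. If the paper allows reparametrizations, I would add a remark: the speed $\sqrt{h_1^2-h_2^2-h_3^2}=1$ is constant, so any affine reparametrization of a one-parameter subgroup is still of the form $\exp(sY)$, and the same argument applies. I expect no computational obstacle beyond this.
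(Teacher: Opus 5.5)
Your argument is correct, and it takes a different route from the paper. The paper works directly with the explicit solution. Writing $q(t)=q_0B(t)$ with $B(t)$ the matrix from Theorem~\ref{norm_traj_form}, it checks $B(t_1)B(t_2)=B(t_1+t_2)$ entry by entry for $\theta_0=0$ using trigonometric addition formulas. For general $\theta_0$, it expands the $(1,1)$ entries of both sides in the products $\alpha_i(t_1)\alpha_j(t_2)$. It then finds the coefficient $(1+i\sinh 2\theta_0\cos\varphi_0)/\cosh 2\theta_0$ of $\alpha_2(t_1)\alpha_2(t_2)$ on the left, against $1$ on the right.

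You instead use the structure of the left-invariant system $\dot q=qU(t)$: the homomorphism property holds exactly when $U$ is constant. Differentiating $B(t_1+t_2)=B(t_1)B(t_2)$ in $t_2$ at $0$ gives $B(t_1)U(t_1)=B(t_1)U(0)$. This also settles the ``subtlety'' you flag, since the paper's definition is exactly this left-translated identity and involves no reparametrization. The question thus becomes one about equilibria of the vertical subsystem, where $\dot h_2=-2h_1h_3$, $\dot h_3=2h_1h_2$ and $h_1\le -1$ force $h_2=h_3=0$, i.e.\ $\theta_0=0$.

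What your route buys:
\begin{itemize}
\item no explicit integration and no lengthy matrix algebra, so it does not depend on the long formulas~\eqref{norm_traj};
\item no need for the linear independence of the products $\alpha_i(t_1)\alpha_j(t_2)$, which the paper's coefficient comparison uses without stating it;
\item an argument valid for any left-invariant problem: the extremal trajectories that are one-parameter subgroups are exactly those with stationary $h$.
\end{itemize}
In exchange, the paper's computation doubles as a consistency check of~\eqref{norm_traj}.

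Minor slip: $q_0\exp(-\frac{it}{2}\sigma_1 t)$ should be $q_0\exp(-\frac{it}{2}\sigma_1)$.
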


    \begin{proof}

    The property of a one-parameter subgroup is that $\forall t_1$, $t_2 \in \mathbb{R}$, $q(t_1)\cdot q(t_2) = q(t_1 + t_2)$ holds. Here and below, multiplication $\cdot$ denotes matrix multiplication. Thus, we need to find out when the following identity holds:

    \begin{align*}
    & q_0\begin{pmatrix}
    b_{11}(t_1) & b_{12}(t_1)\\
    b_{21}(t_1) & b_{22}(t_1)
    \end{pmatrix}
    \begin{pmatrix}
    b_{11}(t_2) & b_{12}(t_2)\\
    b_{21}(t_2) & b_{22}(t_2)
    \end{pmatrix} =
    q_0\begin{pmatrix}
    b_{11}(t_1 + t_2) & b_{12}(t_1 + t_2) \\
    b_{21}(t_1 + t_2) & b_{22}(t_1 + t_2)
    \end{pmatrix}.
    \end{align*}
    
    First, consider the case $\theta_0 = 0$. Then $h_1 = -1$, $m = 1$, $a = 0$, $c = 0$. Here, by direct computation, we show that such trajectories are indeed one-parameter subgroups.

    \begin{align*}
    & b_{11}(t_1)b_{11}(t_2) + b_{12}(t_1)b_{21}(t_2) \big \vert_{h_1 = -1,\ m = 1,\ a = 0,\ c = 0} = \\
    & = \left( \alpha_1(t_1) + \frac{h_1}{m}\alpha_2(t_1) - i\frac{a}{m}\alpha_3(t_1) - i\frac{c}{m}\alpha_2(t_1) \right)\left( \alpha_1(t_2) + \frac{h_1}{m}\alpha_2(t_2) - i\frac{a}{m}\alpha_3(t_2) - i\frac{c}{m}\alpha_2(t_2) \right) + \\
    & + \left( \frac{a}{m}\alpha_2(t_1) - \frac{c}{m}\alpha_3(t_1) + i\alpha_4(t_1) - i\frac{h_1}{m}\alpha_3(t_1) \right)\left( \frac{c}{m}\alpha_3(t_2) - \frac{a}{m}\alpha_2(t_2) - i \frac{h_1}{m}\alpha_3(t_2) + i\alpha_4(t_2) \right)\big \vert_{h_1 = -1,\ m = 1,\ a = 0,\ c = 0} = \\
    & = \left( \alpha_1(t_1) -\alpha_2(t_1) \right)\left( \alpha_1(t_2) -\alpha_2(t_2) \right) + \left( i\alpha_4(t_1) + i\alpha_3(t_1) \right)\left( i \alpha_3(t_2) + i\alpha_4(t_2) \right) = \alpha_1(t_1)\alpha_1(t_2) - \alpha_1(t_1)\alpha_2(t_2) - \\
    & - \alpha_2(t_1)\alpha_1(t_2) + \alpha_2(t_1)\alpha_2(t_2) - \alpha_4(t_1)\alpha_4(t_2) - \alpha_4(t_1)\alpha_3(t_2) - \alpha_3(t_1)\alpha_3(t_2) - \alpha_3(t_1)\alpha_4(t_2) =\\
    & = \cos{(t_1)} \cos{\left( \frac{t_1}{2} \right)} \cos{(t_2)} \cos{\left( \frac{t_2}{2} \right)} - \cos{(t_1)}\sin{\left( \frac{t_1}{2}\right)}\cos{(t_2)}\sin{\left( \frac{t_2}{2} \right)} - \\
    & - \sin{\left( t_1 \right)}\cos{\left( \frac{t_1}{2} \right)}\sin{\left( t_2 \right)}\cos{\left( \frac{t_2}{2} \right)} + \sin{(t_1)} \sin{\left( \frac{t_1}{2} \right)}\sin{(t_2)} \sin{\left( \frac{t_2}{2} \right)} + \\
    & +
    \sin{(t_1)} \sin{\left( \frac{t_1}{2} \right)} \cos{(t_2)} \cos{\left( \frac{t_2}{2} \right)} + \sin{\left( t_1 \right)}\cos{\left( \frac{t_1}{2} \right)}\cos{(t_2)}\sin{\left( \frac{t_2}{2} \right)}\\
    & + \cos{(t_1)}\sin{\left( \frac{t_1}{2}\right)}\sin{\left( t_2 \right)}\cos{\left( \frac{t_2}{2} \right)} +
    \cos{(t_1)} \cos{\left( \frac{t_1}{2} \right)}\sin{(t_2)} \sin{\left( \frac{t_2}{2} \right)} = \\
    & = \cos{(t_1)} \cos{(t_2)}\left( \cos{\left(\frac{t_1}{2}\right)} \cos{\left( \frac{t_2}{2} \right)} - \sin{\left(\frac{t_1}{2}\right)} \sin{\left( \frac{t_2}{2} \right)}  \right) - \sin{(t_1)} \sin{(t_2)}\left( \cos{\left(\frac{t_1}{2}\right)} \cos{\left( \frac{t_2}{2} \right)} - \sin{\left(\frac{t_1}{2}\right)} \sin{\left( \frac{t_2}{2} \right)}  \right) + \\
    & +  \sin{\left( t_1 \right)}\cos{(t_2)}\left( \sin{\left( \frac{t_1}{2} \right)}\cos{\left( \frac{t_2}{2} \right)} + \cos{\left( \frac{t_1}{2} \right)}\sin{\left( \frac{t_2}{2} \right)} \right) + \cos{(t_1)}\sin{\left( t_2 \right)}\left( \sin{\left( \frac{t_1}{2}\right)}\cos{\left( \frac{t_2}{2} \right)} + \cos{\left( \frac{t_1}{2} \right)}\sin{\left( \frac{t_2}{2} \right)} \right) = \\
    & = \left( \cos{(t_1)} \cos{(t_2)} - \sin{(t_1)} \sin{(t_2)}  \right) \left( \cos{\left(\frac{t_1}{2}\right)} \cos{\left( \frac{t_2}{2} \right)} - \sin{\left(\frac{t_1}{2}\right)} \sin{\left( \frac{t_2}{2} \right)}  \right) + \\
    & + \left( \sin{\left( t_1 \right)}\cos{(t_2)} + \cos{(t_1)}\sin{\left( t_2 \right)}  \right) \left( \sin{\left( \frac{t_1}{2}\right)}\cos{\left( \frac{t_2}{2} \right)} + \cos{\left( \frac{t_1}{2} \right)}\sin{\left( \frac{t_2}{2} \right)} \right) = \\
    & = \cos{\left( t_1 + t_2 \right)}\cos{\left( \frac{t_1+t_2}{2} \right)} + \sin{\left( t_1 + t_2 \right)}\sin{\left( \frac{t_1 + t_2}{2} \right)} = \cos{\left(h_1\left( t_1 + t_2 \right)\right)} \cos{\left( \frac{t_1 + t_2}{2}m \right)} - \\
    & - i \frac{a}{m}\sin{\left( \frac{t_1+t_2}{2}m \right)}\cos{\left(h_1 \left(t_1 + t_2\right) \right)} -i \frac{c+ih_1}{m}\sin{\left(\frac{t_1 + t_2}{2}m \right)}\sin{\left(h_1 \left(t_1 + t_2 \right) \right)} \big \vert_{h_1 = -1,\ m = 1,\ a = 0,\ c = 0} = \\
    & = b_{11}(t_1 + t_2) \big \vert_{h_1 = -1,\ m = 1,\ a = 0,\ c = 0}.
    \end{align*}

    \begin{align*}
    & b_{11}(t_1)b_{12}(t_2) + b_{12}(t_1)b_{22}(t_2) \big \vert_{h_1 = -1,\ m = 1,\ a = 0,\ c = 0} = \\
    & = \left( \alpha_1(t_1) + \frac{h_1}{m}\alpha_2(t_1) - i\frac{a}{m}\alpha_3(t_1) - i\frac{c}{m}\alpha_2(t_1) \right)\left( \frac{a}{m}\alpha_2(t_2) - \frac{c}{m}\alpha_3(t_2) + i\alpha_4(t_2) - i\frac{h_1}{m}\alpha_3(t_2) \right) + \\
    & + \left( \frac{a}{m}\alpha_2(t_1) - \frac{c}{m}\alpha_3(t_1) + i\alpha_4(t_1) - i\frac{h_1}{m}\alpha_3(t_1) \right) \left( \frac{h_1}{m}\alpha_2(t_2) + \alpha_1(t_2) + i\frac{c}{m}\alpha_2(t_2) + i\frac{a}{m}\alpha_3(t_2) \right) \big \vert_{h_1 = -1,\ m = 1,\ a = 0,\ c = 0} = \\
    & = \left( \alpha_1(t_1) - \alpha_2(t_1) \right)\left( i\alpha_4(t_2) + i\alpha_3(t_2) \right) + \left( i\alpha_4(t_1) + i\alpha_3(t_1) \right) \left( -\alpha_2(t_2) + \alpha_1(t_2) \right) = i\alpha_1(t_1)\alpha_4(t_2) + i\alpha_1(t_1)\alpha_3(t_2) - \\
    & - i\alpha_2(t_1)\alpha_4(t_2) - i\alpha_2(t_1)\alpha_3(t_2) -i\alpha_4(t_1)\alpha_2(t_2) + i\alpha_4(t_1)\alpha_1(t_2) - i\alpha_3(t_1)\alpha_2(t_2) + i\alpha_3(t_1)\alpha_1(t_2) = \\
    & = i\left[ -\cos{(t_1)} \cos{\left( \frac{t_1}{2} \right)}\sin{\left( t_2 \right)}\cos{\left( \frac{t_2}{2} \right)} + \cos{(t_1)} \cos{\left( \frac{t_1}{2} \right)}\cos{(t_2)}\sin{\left( \frac{t_2}{2} \right)} -  \right. \\
    & \left. - \sin{(t_1)} \sin{\left( \frac{t_1}{2} \right)}\sin{\left( t_2 \right)}\cos{\left( \frac{t_2}{2} \right)} + \sin{(t_1)} \sin{\left( \frac{t_1}{2} \right)}\cos{(t_2)}\sin{\left( \frac{t_2}{2} \right)} - \right. \\
    & \left. - \sin{\left( t_1 \right)}\cos{\left( \frac{t_1}{2} \right)}\sin{(t_2)} \sin{\left( \frac{t_2}{2} \right)} - \sin{\left( t_1 \right)}\cos{\left( \frac{t_1}{2} \right)}\cos{(t_2)} \cos{\left( \frac{t_2}{2} \right)} -  \right. \\
    & \left. + \cos{(t_1)}\sin{\left( \frac{t_1}{2} \right)}\sin{(t_2)} \sin{\left( \frac{t_2}{2} \right)} + \cos{(t_1)}\sin{\left( \frac{t_1}{2} \right)}\cos{(t_2)} \cos{\left( \frac{t_2}{2} \right)} \right] = \\
    & = i\left[ \cos{t_1}\sin{t_2}\left( \sin{\left( \frac{t_1}{2} \right)}\sin{\left( \frac{t_2}{2} \right)} - \cos{\left( \frac{t_1}{2} \right)}\cos{\left( \frac{t_2}{2} \right)} \right) + \sin{t_1}\cos{t_2}\left( \sin{\left( \frac{t_1}{2} \right)}\sin{\left( \frac{t_2}{2} \right)} - \cos{\left( \frac{t_1}{2} \right)}\cos{\left( \frac{t_2}{2} \right)} \right) + \right. \\
    & \left. + \cos{t_1}\cos{t_2}\left( \sin{\left( \frac{t_1}{2} \right)}\cos{\left( \frac{t_2}{2} \right)} + \cos{\left( \frac{t_1}{2} \right)}\sin{\left( \frac{t_2}{2} \right)} \right) - \sin{t_1}\sin{t_2}\left( \sin{\left( \frac{t_1}{2} \right)}\cos{\left( \frac{t_2}{2} \right)} + \cos{\left( \frac{t_1}{2} \right)}\sin{\left( \frac{t_2}{2} \right)} \right) \right] = \\
    & = i\left[ \left( \sin{\left( \frac{t_1}{2} \right)}\sin{\left( \frac{t_2}{2} \right)} - \cos{\left( \frac{t_1}{2} \right)}\cos{\left( \frac{t_2}{2} \right)} \right)\left( \sin{t_1}\cos{t_2} + \cos{t_1}\sin{t_2} \right) + \right. \\
    & + \left. \left( \sin{\left( \frac{t_1}{2} \right)}\cos{\left( \frac{t_2}{2} \right)} + \cos{\left( \frac{t_1}{2} \right)}\sin{\left( \frac{t_2}{2} \right)}  \right) \left( \cos{t_1}\cos{t_2} - \sin{t_1}\sin{t_2} \right) \right] = \\ 
    & = -i\cos{\left( \frac{t_1+t_2}{2} \right)}\sin{(t_1+t_2)} + i\sin{\left( \frac{t_1 + t_2}{2} \right)}\cos{(t_1 + t_2)} = i\cos{\left(\frac{t_1 + t_2}{2}m\right)}\sin{\left(h_1 (t_1 + t_2)\right)} + \\
    & + \frac{a}{m}\sin{\left( \frac{t_1 + t_2}{2}m \right)}\sin{\left( h_1 (t_1 + t_2) \right)} - \frac{c + ih_1}{m}\sin{\left(\frac{t_1 + t_2}{2}m\right)}\cos{\left(h_1 (t_1 + t_2)\right)} \big \vert_{h_1 = -1,\ m = 1,\ a = 0,\ c = 0} = \\
    & = b_{12}(t_1 + t_2)\big \vert_{h_1 = -1,\ m = 1,\ a = 0,\ c = 0}.
    \end{align*}

    \begin{align*}
    & b_{21}(t_1)b_{11}(t_2) + b_{22}(t_1)b_{21}(t_2) = \left( \frac{c}{m}\alpha_3(t_1) - \frac{a}{m}\alpha_2(t_1) - i \frac{h_1}{m}\alpha_3(t_1) + i\alpha_4(t_1) \right)\left( \alpha_1(t_2) + \frac{h_1}{m}\alpha_2(t_2) - i\frac{a}{m}\alpha_3(t_2) - i\frac{c}{m}\alpha_2(t_2) \right) + \\
    & + \left( \frac{h_1}{m}\alpha_2(t_1) + \alpha_1(t_1) + i\frac{c}{m}\alpha_2(t_1) + i\frac{a}{m}\alpha_3(t_1) \right)\left( \frac{c}{m}\alpha_3(t_2) - \frac{a}{m}\alpha_2(t_2) - i \frac{h_1}{m}\alpha_3(t_2) + i\alpha_4(t_2) \right)\big \vert_{h_1 = -1,\ m = 1,\ a = 0,\ c = 0} = \\
    & = \left( i \alpha_3(t_1) + i\alpha_4(t_1) \right)\left( \alpha_1(t_2) - \alpha_2(t_2) \right) + \left( -\alpha_2(t_1) + \alpha_1(t_1) \right)\left( i\alpha_3(t_2) + i\alpha_4(t_2) \right) = i\alpha_3(t_1)\alpha_1(t_2) - i\alpha_3(t_1)\alpha_2(t_2) + \\
    & + i\alpha_4(t_1)\alpha_1(t_2) - i\alpha_4(t_1)\alpha_2(t_2) - i\alpha_2(t_1)\alpha_3(t_2) - i\alpha_2(t_1)\alpha_4(t_2) + i\alpha_1(t_1)\alpha_3(t_2) + i\alpha_1(t_1)\alpha_4(t_2) = \\
    & = b_{11}(t_1)b_{12}(t_2) + b_{12}(t_1)b_{22}(t_2)\big \vert_{h_1 = -1,\ m=1,\ a = 0,\ c = 0} = b_{12}(t_1 + t_2)\big \vert_{h_1 = -1,\ m = 1,\ a = 0,\ c = 0} = \\
    & = -\bar{b}_{12}(t_1 + t_2)\big \vert_{h_1 = -1,\ m = 1,\ a = 0,\ c = 0}.
    \end{align*}

    \begin{align*}
    & b_{21}(t_1)b_{12}(t_2) + b_{22}(t_1)b_{22}(t_2) = \left( \frac{c}{m}\alpha_3(t_1) - \frac{a}{m}\alpha_2(t_1) - i \frac{h_1}{m}\alpha_3(t_1) + i\alpha_4(t_1) \right)\left( \frac{a}{m}\alpha_2(t_2) - \frac{c}{m}\alpha_3(t_2) + i\alpha_4(t_2) - i\frac{h_1}{m}\alpha_3(t_2) \right) + \\
    & + \left( \frac{h_1}{m}\alpha_2(t_1) + \alpha_1(t_1) + i\frac{c}{m}\alpha_2(t_1) + i\frac{a}{m}\alpha_3(t_1) \right)\left( \frac{h_1}{m}\alpha_2(t_2) + \alpha_1(t_2) + i\frac{c}{m}\alpha_2(t_2) + i\frac{a}{m}\alpha_3(t_2) \right) \big \vert_{h_1 = -1,\ m = 1,\ a = 0,\ c = 0} = \\
    & = \left( i\alpha_3(t_1) + i\alpha_4(t_1) \right)\left( i\alpha_4(t_2) + i\alpha_3(t_2) \right) + \left( -\alpha_2(t_1) + \alpha_1(t_1)  \right)\left( -\alpha_2(t_2) + \alpha_1(t_2) \right) = -\alpha_3(t_1)\alpha_4(t_2) - \alpha_3(t_1)\alpha_3(t_2) - \\
    & - \alpha_4(t_1)\alpha_4(t_2) - \alpha_4(t_1)\alpha_3(t_2) + \alpha_2(t_1)\alpha_2(t_2) - \alpha_2(t_1)\alpha_1(t_2) - \alpha_1(t_1)\alpha_2(t_2) + \alpha_1(t_1)\alpha_1(t_2) = \\
    & = b_{11}(t_1)b_{11}(t_2) + b_{12}(t_1)b_{21}(t_2) \big \vert_{h_1 = -1,\ m = 1,\ a = 0,\ c = 0} = b_{11}(t_1 + t_2) \big \vert_{h_1 = -1,\ m = 1,\ a = 0,\ c = 0} = \\
    & = \bar{b}_{11}(t_1 + t_2)\big \vert_{h_1 = -1,\ m = 1,\ a = 0,\ c = 0}.
    \end{align*}

    We will analyze the general case starting from the end, i.e., from what we should obtain, considering the element of the first row, first column:

    \begin{align*}
    & b_{11}(t_1 + t_2) = \cos{\left(h_1(t_1 + t_2)\right)} \cos{\left( \frac{t_1 + t_2}{2}m \right)} - i \frac{a}{m}\sin{\left( \frac{t_1 + t_2}{2}m \right)}\cos{\left(h_1 (t_1 + t_2) \right)} - \\
    & - i \frac{c+ih_1}{m}\sin{\left(\frac{t_1 + t_2}{2}m \right)}\sin{\left(h_1 (t_1 + t_2) \right)} = \\
    & = \left( \cos{\left( \frac{mt_1}{2} \right)}\cos{\left( \frac{mt_2}{2} \right)} - \sin{\left( \frac{mt_1}{2} \right)}\sin{\left( \frac{mt_2}{2} \right)} \right) \left( \cos{(h_1t_1)}\cos{(h_1t_2)} - \sin{(h_1t_1)}\sin{(h_1t_2)} \right) - \\
    & - i \frac{a}{m}\left( \sin{\left( \frac{mt_1}{2} \right)}\cos{\left( \frac{mt_2}{2} \right)} + \cos{\left( \frac{mt_1}{2} \right)}\sin{\left( \frac{mt_2}{2} \right)} \right)\left( \cos{(h_1t_1)}\cos{(h_1t_2)} - \sin{(h_1t_1)}\sin{(h_1t_2)} \right) - \\
    & - i\frac{c+ih_1}{m}\left( \sin{\left( \frac{mt_1}{2}\right)}\cos{\left( \frac{mt_2}{2} \right)} + \cos{\left( \frac{mt_1}{2} \right)}\sin{\left( \frac{mt_2}{2} \right)} \right)\left( \sin{(h_1t_1)}\cos{(h_1t_2)} + \cos{(h_1t_1)}\sin{(h_1t_2)} \right) = \\
    & = \cos{\left( \frac{mt_1}{2} \right)}\cos{\left( \frac{mt_2}{2} \right)}\cos{(h_1t_1)}\cos{(h_1t_2)} - \cos{\left( \frac{mt_1}{2} \right)}\cos{\left( \frac{mt_2}{2} \right)}\sin{(h_1t_1)}\sin{(h_1t_2)} - \\
    & - \sin{\left( \frac{mt_1}{2} \right)}\sin{\left( \frac{mt_2}{2} \right)}\cos{(h_1t_1)}\cos{(h_1t_2)} + \sin{\left( \frac{mt_1}{2} \right)}\sin{\left( \frac{mt_2}{2} \right)}\sin{(h_1t_1)}\sin{(h_1t_2)} - \\
    & - i\frac{a}{m}\sin{\left( \frac{mt_1}{2} \right)}\cos{\left( \frac{mt_2}{2} \right)}\cos{(h_1t_1)}\cos{(h_1t_2)} + i\frac{a}{m} \sin{\left( \frac{mt_1}{2} \right)}\cos{\left( \frac{mt_2}{2} \right)}\sin{(h_1t_1)}\sin{(h_1t_2)} - \\
    & - i\frac{a}{m}\cos{\left( \frac{mt_1}{2} \right)}\sin{\left( \frac{mt_2}{2} \right)}\cos{(h_1t_1)}\cos{(h_1t_2)} + i\frac{a}{m}\cos{\left( \frac{mt_1}{2} \right)}\sin{\left( \frac{mt_2}{2} \right)}\sin{(h_1t_1)}\sin{(h_1t_2)} - \\
    & - i\frac{c + ih_1}{m}\sin{\left( \frac{mt_1}{2}\right)}\cos{\left( \frac{mt_2}{2} \right)}\sin{(h_1t_1)}\cos{(h_1t_2)} - i\frac{c+ih_1}{m}\sin{\left( \frac{mt_1}{2}\right)}\cos{\left( \frac{mt_2}{2} \right)}\cos{(h_1t_1)}\sin{(h_1t_2)} - \\
    & - i\frac{c+ih_1}{m}\cos{\left( \frac{mt_1}{2} \right)}\sin{\left( \frac{mt_2}{2} \right)}\sin{(h_1t_1)}\cos{(h_1t_2)} - i\frac{c+ih_1}{m}\cos{\left( \frac{mt_1}{2} \right)}\sin{\left( \frac{mt_2}{2} \right)}\cos{(h_1t_1)}\sin{(h_1t_2)} = \\
    & = \alpha_1(t_1)\alpha_1(t_2) - \alpha_4(t_1)\alpha_4(t_2) - \alpha_3(t_1)\alpha_3(t_2) + \alpha_2(t_1)\alpha_2(t_2) - \\
    & - i\frac{a}{m}\left( \alpha_3(t_1)\alpha_1(t_2) - \alpha_2(t_1)\alpha_4(t_2) + \alpha_1(t_1)\alpha_3(t_2) - \alpha_4(t_1)\alpha_2(t_2) \right) - \\
    & - i\frac{c+ih_1}{m}\left( \alpha_2(t_1)\alpha_1(t_2) + \alpha_3(t_1)\alpha_4(t_2) + \alpha_4(t_1)\alpha_3(t_2) + \alpha_1(t_1)\alpha_2(t_2) \right).
    \end{align*}

    And now let us write out the element of the first row, first column of the result of the matrix product:

    \begin{align*}
    & b_{11}(t_1)b_{11}(t_2) + b_{12}(t_1)b_{21}(t_2) = \left( \alpha_1(t_1) + \frac{h_1}{m}\alpha_2(t_1) - i\frac{a}{m}\alpha_3(t_1) - i\frac{c}{m}\alpha_2(t_1) \right)\left( \alpha_1(t_2) + \frac{h_1}{m}\alpha_2(t_2) - i\frac{a}{m}\alpha_3(t_2) - i\frac{c}{m}\alpha_2(t_2) \right) + \\
    & + \left( \frac{a}{m}\alpha_2(t_1) - \frac{c}{m}\alpha_3(t_1) + i\alpha_4(t_1) - i\frac{h_1}{m}\alpha_3(t_1) \right)\left( \frac{c}{m}\alpha_3(t_2) - \frac{a}{m}\alpha_2(t_2) - i \frac{h_1}{m}\alpha_3(t_2) + i\alpha_4(t_2) \right) = \\
    & = \alpha_1(t_1)\alpha_1(t_2) + \frac{h_1}{m}\alpha_1(t_1)\alpha_2(t_2) - i\frac{a}{m}\alpha_1(t_1)\alpha_3(t_2) - i\frac{c}{m}\alpha_1(t_1)\alpha_2(t_2) + \\ 
    & + \frac{h_1}{m}\alpha_2(t_1)\alpha_1(t_2) + \frac{h_1^2}{m^2}\alpha_2(t_1)\alpha_2(t_2) - i\frac{h_1a}{m^2}\alpha_2(t_1)\alpha_3(t_2) - i\frac{h_1c}{m^2}\alpha_2(t_1)\alpha_2(t_2) - \\
    & - i\frac{a}{m}\alpha_3(t_1)\alpha_1(t_2) - i\frac{h_1a}{m^2}\alpha_3(t_1)\alpha_2(t_2) - \frac{a^2}{m^2}\alpha_3(t_1)\alpha_3(t_2) - \frac{ac}{m^2}\alpha_3(t_1)\alpha_2(t_2) - \\
    & - i\frac{c}{m}\alpha_2(t_1)\alpha_1(t_2) - i\frac{h_1c}{m^2}\alpha_2(t_1)\alpha_2(t_2) - \frac{ac}{m^2}\alpha_2(t_1)\alpha_3(t_2) - \frac{c^2}{m^2}\alpha_2(t_1)\alpha_2(t_2) + \\
    & + \frac{ac}{m^2}\alpha_2(t_1)\alpha_3(t_2) - \frac{a^2}{m^2}\alpha_2(t_1)\alpha_2(t_2) - i\frac{h_1a}{m^2}\alpha_2(t_1)\alpha_3(t_2) + i\frac{a}{m}\alpha_2(t_1)\alpha_4(t_2) - \\
    & - \frac{c^2}{m^2}\alpha_3(t_1)\alpha_3(t_2) + \frac{ac}{m^2}\alpha_3(t_1)\alpha_2(t_2) + i\frac{h_1c}{m^2}\alpha_3(t_1)\alpha_3(t_2) - i\frac{c}{m}\alpha_3(t_1)\alpha_4(t_2) + \\
    & + i\frac{c}{m}\alpha_4(t_1)\alpha_3(t_2) - i\frac{a}{m}\alpha_4(t_1)\alpha_2(t_2) - \frac{h_1}{m}\alpha_4(t_1)\alpha_3(t_2) - \alpha_4(t_1)\alpha_4(t_2) - \\
    & - i\frac{h_1c}{m^2}\alpha_3(t_1)\alpha_3(t_2) + i\frac{h_1a}{m^2}\alpha_3(t_1)\alpha_2(t_2) - \frac{h_1^2}{m^2}\alpha_3(t_1)\alpha_3(t_2) + \frac{h_1}{m}\alpha_3(t_1)\alpha_4(t_2) = \\
    & = \alpha_1(t_1)\alpha_1(t_2) + \left( \frac{h_1}{m} - i\frac{c}{m}\right)\alpha_1(t_1)\alpha_2(t_2) - i\frac{a}{m}\alpha_1(t_1)\alpha_3(t_2) + \\
    & + \left( \frac{h_1}{m} - i\frac{c}{m} \right)\alpha_2(t_1)\alpha_1(t_2) + \left( \frac{h_1^2}{m^2} - 2i\frac{h_1c}{m^2} - \frac{c^2}{m^2} - \frac{a^2}{m^2} \right)\alpha_2(t_1)\alpha_2(t_2) + \left( -i\frac{h_1a}{m^2} - \frac{ac}{m^2} + \frac{ac}{m^2} - i\frac{h_1a}{m^2} \right)\alpha_2(t_1)\alpha_3(t_2) + \\ 
    & + i\frac{a}{m}\alpha_2(t_1)\alpha_4(t_2) - \\
    & - i\frac{a}{m}\alpha_3(t_1)\alpha_1(t_2) + \left( -i\frac{h_1a}{m^2} - \frac{ac}{m^2} + \frac{ac}{m^2} + i\frac{h_1a}{m^2} \right) \alpha_3(t_1)\alpha_2(t_2) + \left( -\frac{a^2}{m^2} - \frac{c^2}{m^2} + i\frac{h_1c}{m^2} - i\frac{h_1c}{m^2} - \frac{h_1^2}{m^2} \right)\alpha_3(t_1)\alpha_3(t_2) + \\
    & + \left( -i\frac{c}{m} + \frac{h_1}{m} \right)\alpha_3(t_1)\alpha_4(t_2) - \\
    & -i\frac{a}{m}\alpha_4(t_1)\alpha_2(t_2) + \left( i\frac{c}{m} - \frac{h_1}{m} \right)\alpha_4(t_1)\alpha_3(t_2) - \alpha_4(t_1)\alpha_4(t_2) = \\
    & = \alpha_1(t_1)\alpha_1(t_2) + \frac{1+i\sh{2\theta_0}\cos{\varphi_0}}{\ch{2\theta_0}}\alpha_2(t_1)\alpha_2(t_2) - \alpha_3(t_1)\alpha_3(t_2) - \alpha_4(t_1)\alpha_4(t_2) - \\
    & - i\frac{a}{m} \left( \alpha_1(t_1)\alpha_3(t_2) - \alpha_2(t_1)\alpha_4(t_2) + \alpha_3(t_1)\alpha_1(t_2) + \alpha_4(t_1)\alpha_2(t_2) \right) - \\
    & - i\frac{c + ih_1}{m}\left( \alpha_1(t_1)\alpha_2(t_2) + \alpha_2(t_1)\alpha_1(t_2) + \alpha_3(t_1)\alpha_4(t_2) - \alpha_4(t_1)\alpha_3(t_2) \right).
    \end{align*}

    We see that the brackets before $\alpha_2(t_1)\alpha_3(t_2)$ and $\alpha_3(t_1)\alpha_2(t_2)$ vanish.

    The bracket before $\alpha_3(t_1)\alpha_3(t_2)$ simplifies to a simple form:

    $$
     -\frac{a^2}{m^2} - \frac{c^2}{m^2} + i\frac{h_1c}{m^2} - i\frac{h_1c}{m^2} - \frac{h_1^2}{m^2} = \frac{-\sin^2{\varphi_0}\sh^2{\theta_0} - \cos^2{\varphi_0}\sh^2{\theta_0} - \ch^2{\theta_0}}{m^2} = -\frac{\ch^2{\theta_0}+\sh^2{\theta_0}}{\left( \sqrt{\ch{2\theta_0}} \right)^2} = -1.
    $$

    In turn, the bracket before $\alpha_2(t_1)\alpha_2(t_2)$ simplifies only to the following form:

    $$
     \frac{h_1^2}{m^2} - 2i\frac{h_1c}{m^2} - \frac{c^2}{m^2} - \frac{a^2}{m^2} = \frac{\ch^2{\theta_0} - \cos^2{\varphi_0}\sh^2{\theta_0} - \sin^2{\varphi_0}\sh^2{\theta_0} + 2i\ch{\theta_0}\sh{\theta_0}\cos{\varphi_0}}{m^2} = \frac{1 + i\sh{2\theta_0}\cos{\varphi_0}}{\ch{2\theta_0}},
    $$
    although in the result it should be equal to $1$, from which it follows that only for $\theta_0 = 0$ can we obtain a one-parameter subgroup.

    \end{proof}

\subsection{Periodic Trajectories}

\begin{proposition}
    Normal extremals that are one-parameter subgroups are periodic with period $T = 4\pi$.
    \end{proposition}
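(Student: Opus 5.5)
By Proposition~\ref{odnopar}, the only normal extremals that are one-parameter subgroups are those with $\theta_0 = 0$. The plan is therefore to substitute $\theta_0 = 0$ into the explicit formulas of Theorem~\ref{norm_traj_form}. We then determine the smallest $T>0$ with $q(t+T)=q(t)$ for all $t$, or equivalently (since $q$ is a one-parameter subgroup) with $q(T)=\Id$ when $q_0=\Id$.

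At $\theta_0=0$ we have $h_1=-1$, $m=1$ and $a=c=0$. The factor computed in the proof of Theorem~\ref{norm_traj_form} is then
\[
B(t)=\begin{pmatrix} \cos\frac t2 & i\sin\frac t2\\ i\sin\frac t2 & \cos\frac t2\end{pmatrix}\begin{pmatrix} \cos t & -i\sin t\\ -i\sin t & \cos t\end{pmatrix}.
\]
Both factors are exponentials of multiples of $i\sigma_1$, so they commute and combine:
\[
B(t)=\begin{pmatrix} \cos\frac t2 & -i\sin\frac t2\\ -i\sin\frac t2 & \cos\frac t2\end{pmatrix}=e^{-\frac{it}{2}\sigma_1}.
\]
The same result follows directly, since with $\theta_0=0$ we get $V_t=X_1$, whose flow is $q_0e^{-\frac{it}{2}\sigma_1}$. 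Equivalently, in coordinates, $\alpha_1-\alpha_2=\cos\frac t2$ and $\alpha_3+\alpha_4$ reduces to $-\sin\frac t2$, in agreement with the computations in the proof of Proposition~\ref{odnopar}.

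Next, $q(t)=q_0B(t)$, with entries built from $\cos\frac t2$ and $\sin\frac t2$. This gives $q(t+T)=q(t)$ for all $t$ if and only if $B(T)=\Id$. That requires $\cos\frac T2=1$ and $\sin\frac T2=0$, i.e.\ $T\in 4\pi\mathbb{Z}$. The value $T=2\pi$ fails, since $B(2\pi)=-\Id\neq \Id$. Hence the minimal period is $4\pi$. In the coordinates $x_1,y_1,x_2,y_2$ of Theorem~\ref{norm_traj_form}, this amounts to noting that each coordinate is a linear combination of $\cos\frac t2$ and $\sin\frac t2$ that is not identically zero, for any $q_0\in\SU(2)$.

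The main obstacle is only bookkeeping with signs: the paper's formula for the flow of $-f$ contains sign inconsistencies between the normal and abnormal derivations. Rather than rely on those, I would verify $B(t)=e^{-\frac{it}{2}\sigma_1}$ directly from $\dot q = X_1(q)=q(-\frac i2\sigma_1)$. I would also remark that the projection to $SO(3)$ would have period $2\pi$, which explains why the period on $\SU(2)$ is $4\pi$ rather than the $2\pi$ one might expect from Proposition~\ref{prop_periodic_traj}.
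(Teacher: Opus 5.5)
Your proposal is correct and follows the paper's proof: you restrict to $\theta_0=0$ by Proposition~\ref{odnopar}, reduce the trajectory to $q_0e^{-\frac{it}{2}\sigma_1}$, and read off the period. The paper does the reduction in the coordinates $x_1,y_1,x_2,y_2$, obtaining combinations of $\cos\frac t2$ and $\sin\frac t2$, and only calls the period obvious; your check $B(2\pi)=-\Id$ makes the minimality of $4\pi$ explicit.

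Two harmless inaccuracies. First, not every coordinate is nonzero for every $q_0$: for $z_{10}=i$, $z_{20}=0$ one gets $x_1\equiv y_2\equiv 0$. Your criterion $B(T)=\Id$ already settles the period, so that remark is unnecessary. Second, the paper's flows of $-f$ in the normal and abnormal derivations actually agree, both being $q_0e^{ih_1t\sigma_1}$.
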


    \begin{proof}
    By Proposition \ref{odnopar}, only trajectories with $\theta_0 = 0$ are one-parameter subgroups. Then $h_1 = -1$, $m = 1$, $a = 0$, $c = 0$ and, accordingly, the trajectories themselves look as follows:

     \begin{equation*}
    \begin{cases}
    x_1(t) = x_{10}(\alpha_1 -\alpha_2) - y_{20}(\alpha_3 + \alpha_4),\\
    y_1(t) = y_{10}(\alpha_1 - \alpha_2) + x_{20}( \alpha_3 + \alpha_4),\\
    x_2(t) = - y_{10}( \alpha_4 + \alpha_3 ) + x_{20}( -\alpha_2 + \alpha_1 ),\\
    y_2(t) = x_{10}( \alpha_4 + \alpha_3 ) + y_{20}( -\alpha_2 + \alpha_1 ),
    \end{cases}
    \end{equation*}

    where 
    $$
    \alpha_1 = \cos{(-t)} \cos{\left( \frac{t}{2} \right)}, \ \alpha_2 = \sin{(-t)} \sin{\left( \frac{t}{2} \right)}, \ \alpha_3 = \cos{(-t)}\sin{\left( \frac{t}{2} \right)},\ \alpha_4 = \sin{\left( - t \right)}\cos{\left( \frac{t}{2} \right)}.
    $$

    Then, taking into account the assumption made above and its consequences, we have:

    \begin{equation*}
    \begin{cases}
    x_1(t) = x_{10}\left(\cos{(t)} \cos{\left( \frac{t}{2} \right)} + \sin{(t)} \sin{\left( \frac{t}{2} \right)}\right) - y_{20}\left(\cos{(t)}\sin{\left( \frac{t}{2} \right)} - \sin{\left( t \right)}\cos{\left( \frac{t}{2} \right)}\right) = x_{10}\cos{\left( \frac{t}{2} \right)} + y_{20}\sin{\left( \frac{t}{2} \right)},\\
    y_1(t) = y_{10}\left( \cos{(t)} \cos{\left( \frac{t}{2} \right)} + \sin{(t)} \sin{\left( \frac{t}{2} \right)} \right) + x_{20}\left( \cos{(t)}\sin{\left( \frac{t}{2} \right)} - \sin{\left( t \right)}\cos{\left( \frac{t}{2} \right)} \right) = y_{10}\cos{\left( \frac{t}{2} \right)} - x_{20}\sin{\left( \frac{t}{2} \right)},\\
    x_2(t) = - y_{10}\left( -\sin{\left( t \right)}\cos{\left( \frac{t}{2} \right)} + \cos{(t)}\sin{\left( \frac{t}{2} \right)} \right) + x_{20}\left( \sin{(t)} \sin{\left( \frac{t}{2} \right)} + \cos{(t)} \cos{\left( \frac{t}{2} \right)} \right) = y_{10}\sin{\left( \frac{t}{2} \right)} + x_{20}\cos{\left( \frac{t}{2} \right)},\\
    y_2(t) = x_{10}\left( -\sin{\left( t \right)}\cos{\left( \frac{t}{2} \right)} + \cos{(t)}\sin{\left( \frac{t}{2} \right)} \right) + y_{20}\left( \sin{(t)} \sin{\left( \frac{t}{2} \right)} + \cos{(t)} \cos{\left( \frac{t}{2} \right)} \right) = -x_{10}\sin{\left( \frac{t}{2} \right)} + y_{20}\cos{\left( \frac{t}{2} \right)}.
    \end{cases}
    \end{equation*}

    Obviously, such trajectories are periodic with period $T = 4\pi$.

    \end{proof}

    \begin{proposition}
    There exists an infinite set of values of the parameter $\theta_0$ for which the normal extremal trajectories are periodic, without being one-parameter subgroups.
    \end{proposition}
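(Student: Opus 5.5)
From Theorem \ref{norm_traj_form}, the normal extremal through $q_0$ is $q(t)=q_0\,B(t)\,C(t)$. Here $B(t)=e^{-\frac{i}{2}tA}$ is the flow of $f+g$, and $C(t)=\begin{pmatrix}\cos(h_1t) & i\sin(h_1t)\\ i\sin(h_1t) & \cos(h_1t)\end{pmatrix}$ is the flow of $-f$. Since $A^2=m^2E$, the factor $B$ depends on $t$ only through $\cos(\tfrac{m}{2}t)$ and $\sin(\tfrac{m}{2}t)$. The factor $C$ depends only on $\cos(h_1t)$ and $\sin(h_1t)$. All coordinates $x_1,y_1,x_2,y_2$ are therefore linear combinations of the products $\alpha_1,\dots,\alpha_4$. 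Each of these is a product of one function of period $T_1=4\pi/m=4\pi/\sqrt{\ch(2\theta_0)}$ and one function of period $T_2=2\pi/|h_1|=2\pi/\ch\theta_0$. So the plan is to show that the trajectory is periodic as soon as $T_1$ and $T_2$ are commensurable. I will then show that this happens for infinitely many $\theta_0\neq0$. By Proposition \ref{odnopar}, every such trajectory fails to be a one-parameter subgroup.

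\textbf{Step 1 (sufficiency).} Suppose $T_1/T_2=\dfrac{2\ch\theta_0}{\sqrt{\ch(2\theta_0)}}=\dfrac{p}{k}$ with $p,k\in\mathbb N$. Put $T=kT_1=pT_2$. Then $B(t+T)=B(t)$ and $C(t+T)=C(t)$, so $q(t+T)=q(t)$ for all $t$ and every initial point $q_0$. No cancellation analysis is needed, because only sufficiency is required. Even the weaker relation $B(T)=\pm E$, $C(T)=\pm E$ with matching signs would suffice. I will not need it.

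\textbf{Step 2 (infinitely many parameters).} Study $\rho(\theta_0)=\dfrac{2\ch\theta_0}{\sqrt{2\ch^2\theta_0-1}}$ for $\theta_0>0$. It is continuous. It is strictly decreasing in $\ch\theta_0$, since $\rho^2=4/(2-\ch^{-2}\theta_0)$. It tends to $2$ as $\theta_0\to0$ and to $\sqrt2$ as $\theta_0\to\infty$. Hence $\rho$ maps $(0,\infty)$ bijectively onto $(\sqrt2,2)$. Every rational $r=p/k\in(\sqrt2,2)$ therefore has a unique preimage $\theta_0>0$, given explicitly by $\ch^2\theta_0=\dfrac{r^2}{2r^2-4}$. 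The preimage for $-\theta_0$ works as well. There are infinitely many rationals in $(\sqrt2,2)$, for example $r=\tfrac{3}{2},\tfrac{5}{3},\tfrac{7}{4},\dots$. This yields infinitely many values $\theta_0\neq0$.

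\textbf{Step 3 (conclusion).} For these $\theta_0\neq0$ the trajectories are periodic by Step 1, with period $T=kT_1$. By Proposition \ref{odnopar} they are not one-parameter subgroups.

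The only delicate point is Step 1. I have to make sure that the factorization of $q(t)$ really separates into two independently periodic matrix factors. It does, because the composition $e^{-tf}\circ e^{t(f+g)}$ gives $q_0B(t)C(t)$ with $B$ and $C$ each periodic. This avoids any need to analyse the explicit coordinate formulas \eqref{norm_traj}.
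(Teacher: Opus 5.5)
Your argument is correct, and it takes a more economical route than the paper. The paper works with the coordinates. It rewrites the $\alpha_j$ through $\cos(ut),\sin(ut),\cos(vt),\sin(vt)$, with the sum/difference frequencies $u,v=\cosh\theta_0\pm\frac m2$ (up to sign), and gets a common period from Lemma~\ref{prop1} when $u/v\in\mathbb{Q}$. Then in Lemma~\ref{racion} it turns $nu=kv$ into $\cosh^2\theta_0=(n+k)^2/(12nk-2k^2-2n^2)$ and counts integer pairs in the cone $3x<y<(3+2\sqrt2)x$ by a two-variable monotonicity analysis. You instead compare the periods $4\pi/m$ and $2\pi/\cosh\theta_0$ of the two factors of $q_0B(t)C(t)$.

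The two conditions agree. With $\rho=2\cosh\theta_0/m$ one has $u/v=(\rho+1)/(\rho-1)$, which is rational exactly when $\rho$ is, and this map sends your interval $(\sqrt2,2)$ onto the paper's range $(3,3+2\sqrt2)$. What you gain:
\begin{itemize}
\item no coordinate expansions or coefficient bookkeeping;
\item a one-variable monotonicity argument with the explicit inverse $\cosh^2\theta_0=r^2/(2r^2-4)$;
\item distinctness of the resulting $\theta_0$ straight from the injectivity of $\rho$.
\end{itemize}

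The paper's Lemmas~\ref{prop2} and~\ref{4funkcii} aim at the converse, which this statement does not need. Your factorization would give it in two lines as well. Indeed, $q(t+T)\equiv q(t)$ is equivalent to $B(T)=C(T)^{-1}$. For $\theta_0\neq0$ the $\sigma_2,\sigma_3$ components then force $\sin(mT/2)=0$, so $B(T)=C(T)^{-1}=\pm E$ and $\rho\in\mathbb{Q}$.

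For Step~3 you might also bypass the coefficient comparison in the proof of Proposition~\ref{odnopar}. A left-translated one-parameter subgroup $q_0e^{tX}$ has constant $q^{-1}\dot q$. Here, however, $q^{-1}\dot q=-\frac{i}{2}(-h_1\sigma_1+h_2\sigma_2+h_3\sigma_3)$ with $(h_2,h_3)$ rotating on a circle of radius $|\sinh\theta_0|>0$, so it is not constant.
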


    \begin{proof}
    Let us recall the formulas for normal trajectories:

    \begin{equation*}
    \begin{cases}
    x_1(t) = x_{10}(\alpha_1 + \frac{h_1}{m}\alpha_2) + y_{10}(\frac{a}{m}\alpha_3 + \frac{c}{m}\alpha_2) + x_{20}(\frac{c}{m}\alpha_3 - \frac{a}{m}\alpha_2) + y_{20}(\frac{h_1}{m}\alpha_3 - \alpha_4),\\
    y_1(t) = -x_{10}(\frac{a}{m}\alpha_3 + \frac{c}{m}\alpha_2) + y_{10}(\alpha_1 + \frac{h_1}{m}\alpha_2) + x_{20}(-\frac{h_1}{m}\alpha_3 + \alpha_4) + y_{20}(\frac{c}{m}\alpha_3 - \frac{a}{m}\alpha_2),\\
    x_2(t) = x_{10}( \frac{a}{m}\alpha_2 - \frac{c}{m}\alpha_3 ) + y_{10}( -\alpha_4 + \frac{h_1}{m}\alpha_3 ) + x_{20}( \frac{h_1}{m}\alpha_2 + \alpha_1 ) + y_{20}( -\frac{c}{m}\alpha_2 - \frac{a}{m}\alpha_3 ),\\
    y_2(t) = x_{10}( \alpha_4 - \frac{h_1}{m}\alpha_3 ) + y_{10}( \frac{a}{m}\alpha_2 - \frac{c}{m}\alpha_3 ) + x_{20}(\frac{c}{m}\alpha_2 + \frac{a}{m}\alpha_3 )+ y_{20}( \frac{h_1}{m}\alpha_2 + \alpha_1 ).
    \end{cases}
    \end{equation*}

    \begin{align*}
    & \alpha_1 = \cos{(h_1t)} \cos{\left( \frac{t}{2}m \right)}, \ \alpha_2 = \sin{(h_1t)} \sin{\left( \frac{t}{2}m \right)}, \ \alpha_3 = \cos{(h_1t)}\sin{\left( \frac{t}{2}m \right)},\ \alpha_4 = \sin{\left( h_1 t \right)}\cos{\left( \frac{t}{2}m \right)},\\
    & h_1 = -\ch{\theta_0}, \quad  a = \sin{\varphi_0}\sh{\theta_0}, \quad c = \cos{\varphi_0}\sh{\theta_0}, \quad m = \sqrt{\ch{(2\theta_0)}}.
    \end{align*}

    The general form of the trajectories is as follows:

    \begin{equation*}
    \begin{cases}
    x_1(t) = x_{10}\alpha_1 + \left( x_{10}\frac{h_1}{m} + y_{10}\frac{c}{m} - x_{20}\frac{a}{m} \right)\alpha_2 + \left( y_{10}\frac{a}{m} + x_{20}\frac{c}{m} + y_{20}\frac{h_1}{m} \right)\alpha_3 - y_{20}\alpha_4 = b_{11}\alpha_1 + b_{12}\alpha_2 + b_{13}\alpha_3  + b_{14}\alpha_4,\\
    y_1(t) = y_{10}\alpha_1 + \left( -x_{10}\frac{c}{m} + y_{10}\frac{h_1}{m} - y_{20}\frac{a}{m} \right)\alpha_2 + \left( -x_{10}\frac{a}{m} - x_{20}\frac{h_1}{m} + y_{20}\frac{c}{m} \right)\alpha_3 + x_{20}\alpha_4 = b_{21}\alpha_1 + b_{22}\alpha_2 + b_{23}\alpha_3  + b_{24}\alpha_4,\\
    x_2(t) = x_{20}\alpha_1 + \left( x_{10}\frac{a}{m} + x_{20}\frac{h_1}{m} - y_{20}\frac{c}{m} \right)\alpha_2 + \left( -x_{10}\frac{c}{m} + y_{10}\frac{h_1}{m} - y_{20}\frac{a}{m} \right) \alpha_3 - y_{10}\alpha_4 = b_{31}\alpha_1 + b_{32}\alpha_2 + b_{33}\alpha_3  + b_{34}\alpha_4,\\
    y_2(t) = y_{20}\alpha_1 + \left( y_{10}\frac{a}{m} + x_{20}\frac{c}{m} + y_{20}\frac{h_1}{m} \right)\alpha_2 + \left( -x_{10}\frac{h_1}{m} - y_{10}\frac{c}{m} + x_{20}\frac{a}{m} \right) \alpha_3 + x_{10}\alpha_4 = b_{41}\alpha_1 + b_{42}\alpha_2 + b_{43}\alpha_3  + b_{44}\alpha_4.
    \end{cases}
    \end{equation*}

    Let us expand $\alpha_j$, $j=1,..., 4$ as sums (differences) of cosines and sines, and then return to the trajectories:

    \begin{align*}
    & \alpha_1 = \cos{(h_1t)} \cos{\left( \frac{t}{2}m \right)} = \frac{1}{2}\left[ \cos{\left( \left( h_1 + \frac{m}{2} \right)t \right)} + \cos{\left( \left( h_1 - \frac{m}{2}\right)t \right)} \right] = \frac{1}{2}\left( \cos{(ut)} + \cos{(vt)} \right), \\
    & \alpha_2 = \sin{(h_1t)} \sin{\left( \frac{t}{2}m \right)} = \frac{1}{2}\left[ \cos{\left( \left( h_1 - \frac{m}{2} \right)t \right)} - \cos{\left( \left( h_1 + \frac{m}{2} \right)t \right)} \right] = \frac{1}{2}\left( \cos{(vt)} - \cos{(ut)} \right), \\
    & \alpha_3 = \cos{(h_1t)}\sin{\left( \frac{t}{2}m \right)} = \frac{1}{2}\left[ \sin{\left( \left( h_1 + \frac{m}{2} \right)t \right)} - \sin{\left( \left(  h_1 - \frac{m}{2} \right)t \right)}  \right] = \frac{1}{2}\left( \sin{(ut)} - \sin{(vt)} \right),\\
    & \alpha_4 = \sin{\left( h_1 t \right)}\cos{\left( \frac{t}{2}m \right)} = \frac{1}{2}\left[ \sin{\left( \left( h_1 + \frac{m}{2} \right)t \right)} + \sin{\left( \left( h_1 - \frac{m}{2} \right)t \right)} \right] = \frac{1}{2}\left( \sin{(ut)} + \sin{(vt)} \right).
    \end{align*}

    Substitute the obtained relations into the formulas for the trajectories:

    \begin{equation*}
    \begin{cases}
    x_1(t) = \frac{1}{2}\left[ b_{11}\left( \cos{(ut)} + \cos{(vt)} \right) + b_{12}\left( \cos{(vt)} - \cos{(ut)} \right) + b_{13}\left( \sin{(ut)} - \sin{(vt)} \right) + b_{14}\left( \sin{(ut)} + \sin{(vt)} \right) \right],\\
    y_1(t) = \frac{1}{2}\left[ b_{21}\left( \cos{(ut)} + \cos{(vt)} \right) +b_{22}\left( \cos{(vt)} - \cos{(ut)} \right) + b_{23}\left( \sin{(ut)} - \sin{(vt)} \right) + b_{24}\left( \sin{(ut)} + \sin{(vt)} \right) \right],\\
    x_2(t) = \frac{1}{2}\left[ b_{31}\left( \cos{(ut)} + \cos{(vt)} \right) + b_{32}\left( \cos{(vt)} - \cos{(ut)} \right) + b_{33} \left( \sin{(ut)} - \sin{(vt)} \right) + b_{34}\left( \sin{(ut)} + \sin{(vt)} \right) \right],\\
    y_2(t) = \frac{1}{2}\left[ b_{41}\left( \cos{(ut)} + \cos{(vt)} \right) + b_{42}\left( \cos{(vt)} - \cos{(ut)} \right) + b_{43} \left( \sin{(ut)} - \sin{(vt)} \right) + b_{44}\left( \sin{(ut)} + \sin{(vt)} \right) \right].
    \end{cases}
    \end{equation*}

    Collect like terms:

    \begin{equation*}
    \begin{cases}
    x_1(t) = \frac{1}{2}\left[ ( b_{11} - b_{12} )\cos{(ut)} + ( b_{11} + b_{12} )\cos{(vt)} + ( b_{13} + b_{14} )\sin{(ut)} + ( -b_{13} + b_{14} )\sin{(vt)} \right],\\
    y_1(t) = \frac{1}{2}\left[ ( b_{21} - b_{22} )\cos{(ut)} + ( b_{21} + b_{22} )\cos{(vt)} + ( b_{23} + b_{24} )\sin{(ut)} + ( -b_{23} + b_{24} )\sin{(vt)} \right],\\
    x_2(t) = \frac{1}{2}\left[ ( b_{31} - b_{32} )\cos{(ut)} + ( b_{31} + b_{32} )\cos{(vt)} + ( b_{33} + b_{34} )\sin{(ut)} + ( -b_{33} + b_{34} )\sin{(vt)} \right],\\
    y_2(t) = \frac{1}{2}\left[ ( b_{41} - b_{42} )\cos{(ut)} + ( b_{41} + b_{42} )\cos{(vt)} + ( b_{43} + b_{44} )\sin{(ut)} + ( -b_{43} + b_{44} )\sin{(vt)} \right].
    \end{cases}
    \end{equation*}

    As a result, we obtain:

    \begin{equation*}
    \begin{cases}
    x_1(t) = c_{11}\cos{(ut)} + c_{12}\cos{(vt)} + c_{13}\sin{(ut)} + c_{14}\sin{(vt)},\\
    y_1(t) = c_{21}\cos{(ut)} + c_{22}\cos{(vt)} + c_{23}\sin{(ut)} + c_{24}\sin{(vt)},\\
    x_2(t) = c_{31}\cos{(ut)} + c_{32}\cos{(vt)} + c_{33}\sin{(ut)} + c_{34}\sin{(vt)},\\
    y_2(t) = c_{41}\cos{(ut)} + c_{42}\cos{(vt)} + c_{43}\sin{(ut)} + c_{44}\sin{(vt)}.
    \end{cases}
    \end{equation*}

    If in each row only one of the constants is non-zero, then the trajectory is periodic (and it is important that the second index of the non-zero constants has the same parity in all rows).
    If in each row exactly two constants, whose second index has the same parity in all rows, are equal to zero, then such trajectories are also periodic.

    We will consider cases when the constants before the functions with arguments $ut$ and $vt$ do not vanish, and show that there exists an infinite set of $\theta_0$ for which the trajectories are periodic.

    The general form of each expression ($x_j$, $y_l$, $k=1, 2$, $l = 1, 2$) is the following:
    $$
    F(t) = a\cos{(ut)} + b\cos{(vt)} + c\sin{(ut)} + d\sin{(vt)}, 
    $$
    where $a$, $b$, $c$, $d$, $u$, $v \in \mathbb{R}$, $u \neq v$ are non-zero.

    There are two cases when this expression is periodic:

    \begin{itemize}
        \item[1)] when there exists such $T \in \mathbb{R}$ that $\forall t \in \mathbb{R}$ we have $F(t) = F\left(t+T\right)$ :
        \begin{equation}
        \label{full_period}
        a\cos{(ut)} + b\cos{(vt)} + c\sin{(ut)} + d\sin{(vt)} = a\cos{\left(u\left(t+T\right)\right)} + b\cos{\left(v\left(t+T\right)\right)} + c\sin{\left(u\left(t+T\right)\right)} + d\sin{\left(v\left(t+T\right)\right)}.
        \end{equation}
        \item[2)] when there exists such $T \in \mathbb{R}$ that $\forall t \in \mathbb{R}$:
        $$
        f((t + T)u) = f(ut), \quad g((t + T)v) = g(vt),
        $$
        where $f(\cdot)$, $g(\cdot) = \cos{(\cdot)}$ or $\sin{(\cdot)}$.
    \end{itemize}

    Case 2) is a special case of case 1). Indeed, let, for example, $f(\cdot) = g(\cdot) = \cos{(\cdot)}$, and $\exists$ such $T \in \mathbb{R}$ that $\forall t \in \mathbb{R}$:

    $$
    \cos{\left( \left(t + T\right)u\right)} = \cos{(ut)}, \quad \cos{\left(\left(t + T\right)v\right)} = \cos{(vt)}.
    $$
    
    Differentiating each of the relations with respect to $t$ and dividing by the non-zero factors $u$ and $v$, respectively, we obtain:

    $$
    \sin{\left( \left(t + T\right)u\right)} = \sin{(ut)}, \quad \sin{\left(\left(t + T\right)v\right)} = \sin{(vt)}.
    $$
    
    Multiplying each of the relations by the corresponding coefficient $a$, $b$, $c$, $d \in \mathbb{R}$ and summing them all, we obtain (\ref{full_period}). Relation (\ref{full_period}) is derived analogously if $f(\cdot) = \sin{(\cdot)}$, $g(\cdot) = \cos{(\cdot)}$; $f(\cdot) = \cos{(\cdot)}$, $g(\cdot) = \sin{(\cdot)}$; $f(\cdot) = g(\cdot) = \sin{(\cdot)}$.

    However, case 2) is interesting in itself, since, for example, if in each row only one constant is non-zero, but the parities of the second indices of these constants do not coincide, then the trajectory may not be periodic. However, when case 2) is satisfied, we just obtain the periodicity of the trajectory.

    Further considerations give an understanding that only case 2) makes sense. Case 2) is achieved when there are integer or rational relations between $u$ and $v$ ($u = kv$ or $u = \frac{k}{n}v$, $k$, $n \in \mathbb{Z}$, $k$, $n \neq 0$), which is shown in Lemma \ref{prop1}. In turn, when there is an irrational relation between $u$ and $v$ ($u = \alpha v$, $\alpha \in \mathbb{R} \setminus \mathbb{Q}$), even case 1) is not possible. Lemma \ref{prop2} considers a function of the form $F(t)$ with two non-zero coefficients. Lemma \ref{4funkcii} considers a function of the form $F(t)$ with four non-zero coefficients. The case with three non-zero coefficients is considered similarly to the case of four (two) non-zero coefficients, therefore its proof is not given.

    In our problem, $u$ and $v$ have specific functional dependencies on $\theta_0 \in \mathbb{R}$. We further show that the corresponding integer relations between $u$ and $v$ hold for a finite set of $\theta_0$ in Lemma \ref{celochisl}, and rational relations between $u$ and $v$ for an infinite set of $\theta_0$ in Lemma \ref{racion}. Thus, we prove that there exists an infinite set of $\theta_0$ for which the extremal trajectories are periodic, without being one-parameter subgroups.
    
    \end{proof}

    \begin{lemma}
    \label{prop1}
    Let $u$, $v \in \mathbb{R}$, $u \neq 0$, $v \neq 0$.
    \begin{itemize}
        \item[(1)] If $u = kv$, where $k \in \mathbb{Z}$, then $\forall t \in \mathbb{R}$ for $T = \frac{2\pi}{|v|}$ we have:
        $$
        f((t + T)u) = f(ut), \quad g((t + T)v) = g(vt),
        $$
        where $f(\cdot)$, $g(\cdot) = \cos{(\cdot)}$ or $\sin{(\cdot)}$;
        \item[(2)] If $u = \frac{k}{n}v$, where $n,\, k \in \mathbb{Z}$, $k \neq n$, $n \neq 0$, then $\forall t \in \mathbb{R}$ for $T = \frac{2|n| \pi}{|v|}$ we have:
        $$
        f((t + T)u) = f(ut), \quad g((t + T)v) = g(vt),
        $$
        where $f(\cdot)$, $g(\cdot) = \cos{(\cdot)}$ or $\sin{(\cdot)}$.
    \end{itemize}
    \end{lemma}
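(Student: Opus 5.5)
The plan is to reduce everything to the $2\pi$-periodicity of $\cos$ and $\sin$. For a nonzero real $w$ and any $T$ with $Tw \in 2\pi\mathbb{Z}$, we have $\cos((t+T)w) = \cos(wt + Tw) = \cos(wt)$, and likewise for $\sin$, for every $t \in \mathbb{R}$. So in each part it suffices to show that the proposed $T$ makes both $Tu$ and $Tv$ integer multiples of $2\pi$. Both $f$ and $g$ then return to their values independently, whichever of $\cos,\sin$ they are.

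For part (1), take $T = \frac{2\pi}{|v|}$. Then $Tv = 2\pi\,\mathrm{sgn}(v) \in 2\pi\mathbb{Z}$, so $g((t+T)v) = g(vt)$. Next, $Tu = kTv = 2\pi k\,\mathrm{sgn}(v) \in 2\pi\mathbb{Z}$ because $k \in \mathbb{Z}$, so $f((t+T)u) = f(ut)$. The hypothesis $u \neq 0$ forces $k \neq 0$, but this is not needed for the identity itself; it only ensures the situation is non-degenerate.

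For part (2), take $T = \frac{2|n|\pi}{|v|}$. Then $Tv = 2\pi |n|\,\mathrm{sgn}(v) \in 2\pi\mathbb{Z}$, which handles $g$. For $f$, compute
\[
Tu = \frac{k}{n}\,Tv = \frac{k}{n}\cdot 2\pi|n|\,\mathrm{sgn}(v) = 2\pi k\,\mathrm{sgn}(n)\,\mathrm{sgn}(v) \in 2\pi\mathbb{Z},
\]
using $|n|/n = \mathrm{sgn}(n)$. Hence $f((t+T)u) = f(ut)$ as well. The conditions $k \neq n$ and $n \neq 0$ only guarantee that $u \neq v$ and that the fraction makes sense; they play no role in the computation.

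There is no genuine obstacle here. The only point needing care is the sign bookkeeping: $T$ is defined with absolute values, so that $T > 0$, while $u, v, k, n$ may be negative. Writing $Tv$ and $Tu$ explicitly with $\mathrm{sgn}$ factors, as above, removes any ambiguity. Part (1) could also be recovered as the special case $n = 1$ of part (2), but doing it directly is just as short.
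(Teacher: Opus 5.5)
Your proof is correct and follows the same route as the paper, which writes $T = 2\pi\beta$ with $\beta = 1/v$ (resp.\ $\beta = n/v$) and checks that $Tu$ and $Tv$ are integer multiples of $2\pi$, as you do. Your explicit sign bookkeeping is slightly cleaner: the paper's choice of $\beta$ actually gives $T = 2\pi/v$ (resp.\ $2\pi n/v$), and it switches to the absolute-value form $2\pi/|v|$ (resp.\ $2\pi|n|/|v|$) without comment.
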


    \begin{proof}
    Since the period of $\sin{(t)}$ or $\cos{(t)}$ is $2\pi$, and the period of each of the terms separately is, respectively, $\frac{2\pi}{u}$ and $\frac{2\pi}{v}$, we will look for $T$ in the form $2\pi \beta$, $\beta \in \mathbb{R}$.
    $$
    (t + 2\pi \beta)u = tu + 2\pi \beta u, \quad (t + 2\pi \beta)v = tv + 2\pi \beta v.
    $$
    \begin{itemize}
        \item[(1)] Since $u = kv$, where $k \in \mathbb{Z}$, set $\beta = \frac{1}{v}$. Then
        $$
        tu + 2\pi \beta u = tu + \frac{2\pi k v}{v} = tu + 2\pi k, \quad tv + 2\pi \beta v  = tv + 2\pi.
        $$

        $$
        T = \frac{2\pi}{|v|}.
        $$

        \item[(2)] Since $u = \frac{k}{n} v$, where $n,\, k \in \mathbb{Z}$, $|k| < |n|$, set $\beta = \frac{n}{v}$. Then
        $$
        tu + 2\pi \beta u = tu + 2\pi\frac{ n }{v} \frac{k v}{n} = tu + 2\pi k, \quad tv + 2\pi \beta v  = tv + 2\pi v \frac{n}{v} = tv + 2\pi n.
        $$

        $$
        T = \frac{2\pi |n|}{|v|}.
        $$
    \end{itemize}
    \end{proof}

    \begin{lemma}
    \label{prop2}
    Consider the sum $F(t) = af(tu) \pm bg(tv)$, where $f(\cdot), g(\cdot) = \cos{(\cdot)}$ or $\sin{(\cdot)}$, $u$, $v \in \mathbb{R}$, $u$, $v$, $a$, $b \neq 0$, and $u = \alpha v$, where $\alpha \in \mathbb{R} \setminus \mathbb{Q}$. Then there does not exist such $T \in \mathbb{R}$, $T \neq 0$, that $\forall t \in \mathbb{R}$ the equality $F(t + T) = F(t)$ holds.
    \end{lemma}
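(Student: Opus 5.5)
Assume, for contradiction, that some $T\neq 0$ satisfies $F(t+T)=F(t)$ for all $t\in\mathbb{R}$. I will derive a contradiction by showing that both $uT$ and $vT$ must be integer multiples of $2\pi$. The ratio of these two multiples would then be rational, while it equals $u/v=\alpha\notin\mathbb{Q}$.

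The route is linear independence of distinct frequencies. I rewrite $f(tu)$ and $g(tv)$ through complex exponentials, so that
$$
F(t)=\sum_{\lambda\in\{\pm u,\pm v\}} \gamma_\lambda e^{i\lambda t}.
$$
Because $\alpha$ is irrational, $u\neq \pm v$. Hence the four frequencies $u,-u,v,-v$ are pairwise distinct; note $u,v\neq0$. Since $a\neq0$, at least one of $\gamma_{\pm u}$ is non-zero, and likewise, since $b\neq0$, at least one of $\gamma_{\pm v}$ is non-zero. The identity $F(t+T)-F(t)\equiv 0$ becomes
$$
\sum_\lambda \gamma_\lambda\,(e^{i\lambda T}-1)\,e^{i\lambda t}\equiv 0 .
$$

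The main step is the standard fact that exponentials $e^{i\lambda t}$ with distinct real $\lambda$ are linearly independent over $\mathbb{C}$. I would prove it quickly in one of two ways. The first is to differentiate the identity three times at $t=0$, which produces a Vandermonde system in the distinct $\lambda$'s. The second is to multiply by $e^{-i\mu t}$ and average over $[0,R]$ as $R\to\infty$, which isolates the coefficient at frequency $\mu$. Either way, every coefficient must vanish, so $\gamma_\lambda(e^{i\lambda T}-1)=0$ for each $\lambda$.

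Choosing a $\lambda=\pm u$ with $\gamma_\lambda\neq0$ gives $uT=2\pi k$ with $k\in\mathbb{Z}$. Similarly, $vT=2\pi n$ with $n\in\mathbb{Z}$. Since $T\neq0$ and $v\neq0$, we have $n\neq0$. Therefore $\alpha=u/v=k/n\in\mathbb{Q}$, a contradiction.

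I expect the only delicate point to be justifying linear independence cleanly, in particular ensuring that the frequencies are genuinely distinct, which uses $u\neq\pm v$. An elementary alternative avoids exponentials. Differentiate $F(t+T)=F(t)$ twice to get $-u^2 a f(tu)\mp v^2 b g(tv)$ periodic as well. Combining this with $F$ itself, and using $u^2\neq v^2$, separates the two terms, so $af(tu)$ alone is $T$-periodic. That forces $uT\in 2\pi\mathbb{Z}$, and likewise $vT\in 2\pi\mathbb{Z}$.
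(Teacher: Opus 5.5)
Your proof is correct. Your main route differs from the paper's mainly in the tool used to separate the two frequencies.

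The paper works case by case ($\sin$--$\sin$, $\cos$--$\cos$, $\sin$--$\cos$, $\cos$--$\sin$), written with unit coefficients. In each case it evaluates $F(t+T)=F(t)$ and its second derivative only at $t=0$. It then multiplies the first relation by $u^2$ or $v^2$ and adds it to the second. This gives conditions such as $\sin(Tu)=\sin(Tv)=0$ or $\cos(Tu)=\cos(Tv)=1$, hence $Tu,Tv\in\pi\mathbb{Z}$ and $\alpha\in\mathbb{Q}$. It also treats separately the case where each term is $T$-periodic on its own, which the general case already covers.

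Your exponential argument replaces this with the linear independence of $e^{i\lambda t}$ for distinct real $\lambda$. That handles all four $\sin/\cos$ combinations and arbitrary nonzero $a,b$ at once, and gives the sharper conclusion $uT,vT\in 2\pi\mathbb{Z}$. The cost is having to invoke and justify that lemma (via Vandermonde or averaging). The paper, by contrast, needs only two scalar equations per case.

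Your ``elementary alternative'' is essentially the paper's argument, carried out as an identity in $t$ rather than only at $t=0$. One point in your favour: you correctly note that the elimination needs $u^2\neq v^2$, i.e.\ $\alpha\neq\pm1$, whereas the paper cites only $u\neq v$.
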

    \begin{proof} 
    We will carry out the proof by contradiction.
        \begin{itemize}
        \item First, assume the existence of such $T \in \mathbb{R}$ that:
        $$
        f((t + T)u) = f(ut), \quad g((t + T)v) = g(vt).
        $$
        This is equivalent to
        $$
        tu + Tu = tu + T\alpha v = tu + 2l\pi, \quad tv + Tv = tv + 2m\pi, \quad l,\, m \in \mathbb{Z}.
        $$

        That is, on one hand, $T = \frac{2l\pi}{\alpha v}$. On the other hand, $T = \frac{2m \pi}{v}$. Let us find out if this is possible for any $l$, $m \in \mathbb{Z}$. We obtain the equality

        $$
        \frac{2l\pi}{\alpha v} = \frac{2m \pi}{v},
        $$

        which is equivalent to

        $$
        \alpha = \frac{l}{m},
        $$

        which cannot be due to the irrationality of $\alpha$.

        \item Consider another possibility. Assume the existence of such $T \in \mathbb{R}$ that:

        $$
        f((t+T)u) \pm g((t+T)v) = f(tu) \pm g(tv).
        $$
         
        \begin{itemize}
        
        \item Let $f = g = \sin{t}$. That is,
         \begin{equation}
        \label{sinsin1}
         \sin{(tu)} \pm \sin{(tv)} = \sin{((t+T)u)} \pm \sin{((t+T)v)}
         \end{equation}

         For $t = 0$:
         \begin{equation}
            \label{sinsin1_t0}
            0 = \sin{(Tu)} \pm \sin{(Tv)}.
         \end{equation}

         Differentiate (\ref{sinsin1}) with respect to $t$:
         $$
         u\cos{(tu)} \pm v\cos{(tv)} = u\cos{((t + T)u)} \pm v\cos{((t + T)v)},
         $$
         and differentiate once more with respect to $t$:
         $$
         -u^2\sin{(tu)} \mp v^2\sin{(tv)} = -u^2\sin{((t+T)u)} \mp v^2\sin{((t+T)v)}.
         $$

         Substitute $t = 0$ into the obtained equation:

         \begin{equation}
         \label{sinsin2_t0}
         0 = -u^2\sin{(Tu)} \mp v^2\sin{(Tv)}.
         \end{equation} 
         
        Multiply (\ref{sinsin1_t0}) by $u^2$ and add to (\ref{sinsin2_t0}):

        $$
        0 = \pm( u^2 - v^2 )\sin{(Tv)},
        $$

        which is equivalent to

        $$
        Tv = l\pi, \ l \in \mathbb{Z},
        $$

        since $u \neq v$.
        
        Multiply (\ref{sinsin1_t0}) by $v^2$ and add to (\ref{sinsin2_t0}):

        $$
        0 = (v^2 - u^2)\sin{(Tu)},
        $$

        which is equivalent to

        $$
        Tu = m\pi, \ m \in \mathbb{Z},
        $$

        since $u \neq v$. Thus:

        $$
        T = \frac{l\pi}{v} = \frac{m\pi}{u}, \ l,\, m \in \mathbb{Z},
        $$

        but $u = \alpha v$, $\alpha \in \mathbb{R} \setminus \mathbb{Q}$, therefore we obtain

        $$
        \frac{l}{v} = \frac{m}{\alpha v} \Leftrightarrow \alpha = \frac{m}{l}, \quad l,\, m \in \mathbb{Z},
        $$

        which contradicts our assumption.

        \item Let $f = g = \cos{t}$. That is,
         \begin{equation}
            \label{coscos1}
         \cos{(tu)} \pm \cos{(tv)} = \cos{((t+T)u)} \pm \cos{((t+T)v)}
         \end{equation}

         For $t = 0$:
         \begin{equation}
            \label{coscos1_t0}
            1 \pm 1 = \cos{(Tu)} \pm \cos{(Tv)}.
         \end{equation}

         Differentiate (\ref{coscos1}) with respect to $t$:
         $$
         -u\sin{(tu)} \mp v\sin{(tv)} = -u\sin{((t + T)u)} \mp v\sin{((t + T)v)},
         $$
         and differentiate once more with respect to $t$:
         $$
         -u^2\cos{(tu)} \mp v^2\cos{(tv)} = -u^2\cos{((t+T)u)} \mp v^2\cos{((t+T)v)}.
         $$

         Substitute $t = 0$ into the obtained equation:

         \begin{equation}
         \label{coscos2_t0}                
         -u^2 \mp v^2 = -u^2\cos{(Tu)} \mp v^2\cos{(Tv)}.   
         \end{equation} 
         
        Multiply (\ref{coscos1_t0}) by $u^2$ and add to (\ref{coscos2_t0}):

        $$
        \pm (u^2 - v^2) = \pm (u^2 - v^2)\cos{(Tv)},
        $$

        which is equivalent to
        $$
        Tv = 2l\pi, \ l \in \mathbb{Z},
        $$

        since $u \neq v$.

        Multiply (\ref{coscos1_t0}) by $v^2$ and add to (\ref{coscos2_t0}):

        $$
        (v^2 - u^2) = (v^2 - u^2)\cos{(Tu)},
        $$

        which is equivalent to
        $$
        Tu = 2m\pi, \ m \in \mathbb{Z},
        $$

        since $u \neq v$.

        \item Let $f = \sin{t}$, $g = \cos{t}$. That is,

        \begin{equation}
        \label{sincos1}
        \sin{(tu)} \pm \cos{(tv)} = \sin{((t+T)u)} \pm \cos{((t+T)v)}.
        \end{equation}

        For $t = 0$:

        \begin{equation}
        \label{sincos1_t0}
        \pm 1 = \sin{(Tu)} \pm \cos{(Tv)}.
        \end{equation}

        Differentiate (\ref{sincos1}) twice with respect to $t$:

        $$
        -u^2\sin{(tu)} \mp v^2\cos{(tv)} = -u^2\sin{((t+T)u)} \mp v^2\cos{((t+T)v)}.
        $$

        Substitute $t = 0$:

        \begin{equation}
        \label{sincos2_t0}
        \mp v^2 = -u^2\sin{(Tu)} \mp v^2\cos{(Tv)}.
        \end{equation}

        Multiply (\ref{sincos1_t0}) by $u^2$ and add to (\ref{sincos2_t0}):

        $$
        \pm (u^2 - v^2) = \pm (u^2 - v^2)\cos{(Tv)},
        $$

        which is equivalent to

        $$
        Tv = 2l\pi, \ l \in \mathbb{Z},
        $$

        since $u \neq v$.

        Multiply (\ref{sincos1_t0}) by $v^2$ and add to (\ref{sincos2_t0}):

        $$
        0 = (v^2 - u^2 )\sin{(Tu)},
        $$

        which is equivalent to

        $$
        Tu = m\pi, \ m \in \mathbb{Z},
        $$

        since $u \neq v$.

        \item Let $f = \cos{t}$, $g = \sin{t}$. That is,

        \begin{equation}
        \label{cossin1}
        \cos{(tu)} \pm \sin{(tv)} = \cos{((t + T)u)} \pm \sin{((t + T)v)}.
        \end{equation}

        For $t = 0$:

        \begin{equation}
        \label{cossin1_t0}
        1 = \cos{(Tu)} \pm \sin{(Tv)}.
        \end{equation}

        Differentiate (\ref{cossin1}) twice with respect to $t$:

        $$
        -u^2\cos{(tu)} \mp v^2\sin{(tv)} = -u^2\cos{((t + T)u)} \mp v^2 \sin{((t + T)v)}.
        $$

        Substitute $t = 0$:

        \begin{equation}
        \label{cossin2_t0}
        -u^2 = -u^2\cos{(Tu)} \mp v^2 \sin{(Tv)}.
        \end{equation}

        Multiply (\ref{cossin1_t0}) by $u^2$ and add to (\ref{cossin2_t0}):

        $$
        0 = \pm (u^2 - v^2)\sin{(Tv)},
        $$

        which is equivalent to

        $$
        Tv = l\pi, \ l \in \mathbb{Z},
        $$

        since $u \neq v$.

        Multiply (\ref{cossin1_t0}) by $v^2$ and add to (\ref{cossin2_t0}):

        $$
        v^2 - u^2 = (v^2 - u^2)\cos{(Tu)},
        $$

        which is equivalent to

        $$
        Tu = 2m\pi, \ m \in \mathbb{Z},
        $$

        since $u \neq v$.
        
        \end{itemize}
        
        \end{itemize}
    \end{proof}

    \begin{lemma}
    \label{4funkcii}
    Consider the function $F(t) = a\cos{(ut)} + b\cos{(vt)} + c\sin{(ut)} + d\sin{(vt)}$, where $a$, $b$, $c$, $d$, $u$, $v \in \mathbb{R}$ are non-zero. Moreover, $u = \alpha v$, $\alpha \in \mathbb{R} \setminus \mathbb{Q}$. Then the function $F(t)$ is non-periodic.
    \end{lemma}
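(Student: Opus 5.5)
Assume, to the contrary, that there is $T \neq 0$ with $F(t+T) = F(t)$ for all $t \in \mathbb{R}$. We will show that this forces $Tu \in 2\pi\mathbb{Z}$ and $Tv \in 2\pi\mathbb{Z}$, and then derive a contradiction with the irrationality of $\alpha$, exactly as in the first part of the proof of Lemma \ref{prop2}. We may assume $u \neq \pm v$: indeed $u \neq v$ by hypothesis, and $u = -v$ would give $\alpha = -1 \in \mathbb{Q}$.

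First rewrite $F$ in amplitude–phase form. Since $(a,c) \neq (0,0)$ and $(b,d) \neq (0,0)$, we have
$$
F(t) = A\cos(ut - \psi) + B\cos(vt - \chi), \qquad A = \sqrt{a^2+c^2} > 0, \quad B = \sqrt{b^2+d^2} > 0,
$$
for suitable phases $\psi, \chi$. Set $G(t) = F(t+T) - F(t)$. Then
$$
G(t) = A\bigl(\cos(ut + uT - \psi) - \cos(ut - \psi)\bigr) + B\bigl(\cos(vt + vT - \chi) - \cos(vt - \chi)\bigr) = P\cos(ut) + Q\sin(ut) + R\cos(vt) + S\sin(vt),
$$
and the assumption says $G \equiv 0$.

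Next, separate the two frequencies using the second derivative, as in Lemma \ref{prop2}. From $G \equiv 0$ we get $G'' + v^2 G \equiv 0$, which kills the $v$-terms:
$$
(v^2 - u^2)\bigl(P\cos(ut) + Q\sin(ut)\bigr) \equiv 0.
$$
Since $u^2 \neq v^2$, this gives $P = Q = 0$. Symmetrically, $G'' + u^2 G \equiv 0$ gives $R = S = 0$. Therefore the $u$-part of $G$ vanishes identically:
$$
\cos(ut + uT - \psi) \equiv \cos(ut - \psi),
$$
and the $v$-part vanishes likewise. Evaluating the $u$-identity and its derivative at a single point shows that $uT$ must be a multiple of $2\pi$. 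Concretely, rotating the unit vector $(\cos x, \sin x)$ by the angle $uT$ leaves it fixed, so $uT = 2\pi l$ for some $l \in \mathbb{Z}$. In the same way $vT = 2\pi m$ for some $m \in \mathbb{Z}$.

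Finally, derive the contradiction. Because $T \neq 0$ and $v \neq 0$, we have $m \neq 0$. Dividing the two relations gives $\alpha = u/v = l/m \in \mathbb{Q}$, which contradicts $\alpha \in \mathbb{R} \setminus \mathbb{Q}$. Hence $F$ is non-periodic.

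The only step that needs care is the frequency separation, where the hypothesis $u^2 \neq v^2$ is essential. Notably, we never need the individual coefficients $a, b, c, d$ to be nonzero. It suffices that each frequency appears with nonzero amplitude, $A, B > 0$. This also handles the three-coefficient case that the paper omits.
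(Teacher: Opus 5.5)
Your proof is correct and follows essentially the same route as the paper. Both arguments separate the two frequencies with second derivatives: the paper evaluates $F, F', F'', F'''$ at $t=0$ and forms combinations such as $u^2(\cdot)-(\cdot)$, while you apply $G''+v^2G$ to the whole identity. Both then conclude that each frequency block is individually $T$-periodic, so $uT, vT\in 2\pi\mathbb{Z}$ and $\alpha=l/m\in\mathbb{Q}$, a contradiction. Note that the paper's rotation argument likewise only uses $(a,c)\neq(0,0)$ and $(b,d)\neq(0,0)$, so your extension to the three-coefficient case is available there too.
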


    \begin{proof}
    Actually, the first case has already been considered: we assume the existence of such $T \in \mathbb{R}$ that:
        $$
        f\left((t+T)u\right) = f(ut),\ g\left((t+T)v\right) = g(vt),
        $$
        where $f$, $g$ are equal to $\cos$ or $\sin$. We arrive at a contradiction with the irrationality of the number $\alpha$.
        
    The second case consists in assuming the existence of such $T \in \mathbb{R}$ that $F(t) = F\left(t+T\right)$ $\forall t \in \mathbb{R}$:
        \begin{equation}
        \label{full_period1}
        a\cos{(ut)} + b\cos{(vt)} + c\sin{(ut)} + d\sin{(vt)} = a\cos{\left(u\left(t+T\right)\right)} + b\cos{\left(v\left(t+T\right)\right)} + c\sin{\left(u\left(t+T\right)\right)} + d\sin{\left(v\left(t+T\right)\right)}.
        \end{equation}
        We will obtain 4 equations, from which we will also derive a contradiction with the irrationality of the number $\alpha$.

        Differentiate (\ref{full_period}) 3 times with respect to $t$, and then substitute $t = 0$ into each obtained equation.

        $d/dt$(\ref{full_period1}):

        \begin{equation}
        \label{first_der}
        -au\sin{(ut)} - bv\sin{(vt)} + cu\cos{(ut)} + dv\cos{(vt)} = -au\sin{\left(u\left(t+T\right)\right)} - bv\sin{\left(v\left(t+T\right)\right)} + cu\cos{\left(u\left(t+T\right)\right)} + dv\cos{\left(v\left(t+T\right)\right)}.
        \end{equation}

        $d^2/dt^2$(\ref{full_period1}):

        \begin{equation}
        \label{sec_der}
        au^2\cos{(ut)} + bv^2\cos{(vt)} + cu^2\sin{(ut)} + dv^2\sin{(vt)} = au^2\cos{\left(u\left(t+T\right)\right)} + bv^2\cos{\left(v\left(t+T\right)\right)} + cu^2\sin{\left(u\left(t+T\right)\right)} + dv^2\sin{\left(v\left(t+T\right)\right)}.
        \end{equation}

        $d^3/dt^3$(\ref{full_period1}):

        \begin{equation}
        \label{third_der}
        au^3\sin{(ut)} + bv^3\sin{(vt)} - cu^3\cos{(ut)} - dv^3\cos{(vt)} = au^3\sin{\left(u\left(t+T\right)\right)} + bv^3\sin{\left(v\left(t+T\right)\right)} - cu^3\cos{\left(u\left(t+T\right)\right)} - dv^3\cos{\left(v\left(t+T\right)\right)}.
        \end{equation}

        Substitute $t=0$ into (\ref{full_period1}), (\ref{first_der}), (\ref{sec_der}), (\ref{third_der}):

        \begin{equation}
        \label{t0_zeroth}
        a + b = a\cos{\left(uT\right)} + b\cos{\left(vT\right)} + c\sin{\left(uT\right)} + d\sin{\left(vT\right)}.
        \end{equation}

        \begin{equation}
        \label{t0_first}
        cu+dv = -au\sin{\left(uT\right)} - bv\sin{\left(vT\right)} + cu\cos{\left(uT\right)} + dv\cos{\left(vT\right)}.
        \end{equation}

        \begin{equation}
        \label{t0_second}
        au^2+bv^2 = au^2\cos{\left(uT\right)} + bv^2\cos{\left(vT\right)} + cu^2\sin{\left(uT\right)} + dv^2\sin{\left(vT\right)}.
        \end{equation}

        \begin{equation}
        \label{t0_third}
        -cu^3 - dv^3 = au^3\sin{\left(uT\right)} + bv^3\sin{\left(vT\right)} - cu^3\cos{\left(uT\right)} - dv^3\cos{\left(vT\right)}.
        \end{equation}

        $u^2$(\ref{t0_zeroth}) $-$ (\ref{t0_second}):

        $$
        (u^2-v^2)b = (u^2 - v^2)(b\cos{\left( vT \right)} + d\sin{\left(vT \right)}) \Leftrightarrow b = b\cos{\left( vT \right)} + d\sin{\left(vT \right)}.
        $$

        $v^2$(\ref{t0_zeroth}) $-$ (\ref{t0_second}):

        $$
        (v^2-u^2)a = (v^2-u^2)\left( a\cos{\left(uT \right)} + c\sin{\left(uT \right)} \right) \Leftrightarrow a = a\cos{\left(uT \right)} + c\sin{\left(uT \right)}.
        $$

        $u^2$(\ref{t0_first}) $+$ (\ref{t0_third}):
        
        $$
        (u^2-v^2)vd = (u^2-v^2)v\left( d\cos{\left(vT \right)} - b\sin{\left(uT \right)} \right) \Leftrightarrow d = - b\sin{\left(uT \right)} + d\cos{\left(vT \right)}.
        $$

        $u^2$(\ref{t0_first}) $+$ (\ref{t0_third}):

        $$
        (v^2-u^2)uc = ( u^2 - v^2 )u\left( a\sin{\left(uT \right)} - c\cos{\left(uT \right)}\right) \Leftrightarrow c = -a\sin{\left(uT \right)} + c\cos{\left(uT \right)}.
        $$

        We have obtained the following relations:

        $$
        \begin{pmatrix}
        a \\
        c
        \end{pmatrix} = \begin{pmatrix}
        \cos{\left(uT \right)} & \sin{\left(uT \right)}\\
        -\sin{\left(uT \right)} & \cos{\left(uT \right)}
        \end{pmatrix}\begin{pmatrix}
        a \\
        c
        \end{pmatrix}; \quad \begin{pmatrix}
        b \\
        d
        \end{pmatrix} = \begin{pmatrix}
        \cos{\left(vT \right)} & \sin{\left(vT \right)}\\
        -\sin{\left(vT \right)} & \cos{\left(vT \right)}
        \end{pmatrix}\begin{pmatrix}
        b \\
        d
        \end{pmatrix}.
        $$

        That is, the pairs of numbers $(a, c)$ and $(b, d)$ are mapped onto themselves by rotation matrices through angles $uT$ and $vT$, respectively.

        From this we conclude:
        $$
        uT = 2l\pi, \quad vT = 2m\pi,\quad l,\ m \in \mathbb{Z}. 
        $$
        Thus, we obtain a contradiction with the fact that $\alpha \in \mathbb{R} \setminus \mathbb{Q}$.
    \end{proof}

    \begin{lemma}
    \label{celochisl}
    Let $u = \ch{\theta_0} + \frac{\sqrt{\ch{2\theta_0}}}{2}$, $v = \ch{\theta_0} - \frac{\sqrt{\ch{2\theta_0}}}{2} $, then there are integer relations $u = kv$, $k \in \mathbb{Z} \setminus {0}$ for
    $$
    \theta_0 = 0,\ k = 3, \quad \theta_0 = \pm \arcosh{\frac{5}{14}},\ k = 4, \quad \theta_0 = \pm \arcosh{\frac{3}{4}},\ k = 5.
    $$
    \end{lemma}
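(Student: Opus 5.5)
The plan is to reduce the condition $u=kv$ to one algebraic equation in $C:=\ch{\theta_0}\geqslant 1$ and then determine which integers $k$ give an admissible solution. Throughout write $m=\sqrt{\ch{(2\theta_0)}}=\sqrt{2C^2-1}$, so that $u=C+\frac m2$ and $v=C-\frac m2$.

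\emph{Step 1: positivity.} First record that $u>0$ and $v>0$. Indeed $m<\sqrt2\,C<2C$, and this is exactly the inequality $\frac{\sqrt{2}}{2}<\frac{\ch{\theta_0}}{\sqrt{\ch{(2\theta_0)}}}$ already used in the analysis of conjugate points. Hence any relation $u=kv$ forces $k>0$. Since $u>v$, in fact $k\geqslant 2$, which excludes $k\leqslant 1$ at once.

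\emph{Step 2: reduce to an equation for $C^2$.} The relation $C+\frac m2=k\left(C-\frac m2\right)$ is equivalent to $m=\frac{2(k-1)}{k+1}\,C$. Both sides are positive for $k\geqslant 2$, so I square and substitute $m^2=2C^2-1$. This gives
$$
C^2=\frac{(k+1)^2}{2(k+1)^2-4(k-1)^2}.
$$
Because squaring was done on two positive quantities, this step is reversible.

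\emph{Step 3: admissibility conditions on $k$.} The formula yields an admissible $\theta_0$ only if two conditions hold.

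The denominator must be positive: $2(k+1)^2>4(k-1)^2$, i.e. $\frac{k+1}{k-1}>\sqrt2$. This is equivalent to $k<3+2\sqrt2\approx 5.83$.

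We also need $C^2\geqslant 1$: $(k+1)^2\geqslant 2(k+1)^2-4(k-1)^2$, i.e. $4(k-1)^2\geqslant (k+1)^2$. For $k\geqslant 2$ this is $2(k-1)\geqslant k+1$, i.e. $k\geqslant 3$.

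So $k\in\{3,4,5\}$. An equivalent monotonicity view is that $\frac uv=\frac{2C+m}{2C-m}$ increases from $3$ (at $C=1$) toward $3+2\sqrt2$ as $C\to\infty$.

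\emph{Step 4: evaluate.} Substituting $k=3,4,5$ gives:
\begin{itemize}
\item $k=3$: $C^2=16/16=1$, hence $\theta_0=0$;
\item $k=4$: $C^2=25/14$;
\item $k=5$: $C^2=36/8=9/2$, i.e. $C=3/\sqrt2$. This is the same threshold value $\ch{\theta_0}=\frac{3}{\sqrt2}$ that appears in the conjugate-time corollary.
\end{itemize}
In each case $\theta_0$ and $-\theta_0$ both occur, since only $\ch{\theta_0}$ enters $u$ and $v$.

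\emph{Main obstacle.} There is no real difficulty beyond the bookkeeping of signs. The one point needing care is the identification with the values written in the statement. The computation produces $\ch^2{\theta_0}=\frac{25}{14}$ and $\ch^2{\theta_0}=\frac{9}{2}$, not $\ch{\theta_0}$ equal to these numbers; note also that $\frac{5}{14}<1$ and $\frac34<1$ lie outside the domain of $\arcosh$. I would therefore state the result as $\theta_0=\pm\arcosh\frac{5}{\sqrt{14}}$ for $k=4$ and $\theta_0=\pm\arcosh\frac{3}{\sqrt2}$ for $k=5$, and check each by direct substitution into $u-kv$. The proof would also include the uniqueness assertion, which follows from the reversibility of Step 2, so that these are the only integer relations.
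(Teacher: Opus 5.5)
Your proof is correct and follows the paper's route. Both reduce $u=kv$ to $\cosh^2\theta_0=\frac{(k+1)^2}{12k-2k^2-2}$ and arrive at the same admissible set $k\in\{3,4,5\}$. The paper finds the admissible $k$ by studying the auxiliary function $q(x)=\frac{x+1}{\sqrt{-2x^2+12x-2}}$ and excludes $v=ku$ separately by a sign argument; your observation $0<v<u$ does both at once and makes the squaring reversible. Your correction of the values to $\pm\arcosh\frac{5}{\sqrt{14}}$ and $\pm\arcosh\frac{3}{\sqrt{2}}$ is right: these are exactly what the paper's own proof computes, so $\frac{5}{14}$ and $\frac34$ in the statement are typos.
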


    \begin{proof}

    Is any of the following relations possible:

    \begin{equation*}
    \ch{\theta_0} + \frac{\sqrt{\ch{2\theta_0}}}{2} = k \left( \ch{\theta_0} - \frac{\sqrt{\ch{2\theta_0}}}{2} \right) \quad \text{or} \quad \ch{\theta_0} - \frac{\sqrt{\ch{2\theta_0}}}{2} = k\left( \ch{\theta_0} + \frac{\sqrt{\ch{2\theta_0}}}{2} \right), \quad k \in \mathbb{Z}?
    \end{equation*}

    \begin{align*}
    & \ch{\theta_0} + \frac{\sqrt{\ch{2\theta_0}}}{2} = k \left( \ch{\theta_0} - \frac{\sqrt{\ch{2\theta_0}}}{2} \right) \quad \quad \ch{\theta_0} - \frac{\sqrt{\ch{2\theta_0}}}{2} = k\left( \ch{\theta_0} + \frac{\sqrt{\ch{2\theta_0}}}{2} \right) \\
    & (k+1)\sqrt{\ch{2\theta_0}} = 2(k-1)\ch{\theta_0} \quad \quad \quad \quad \quad \quad 2(1-k)\ch{\theta_0} = (k+1)\sqrt{\ch{2\theta_0}} \\
    & \frac{k-1}{k+1} = \frac{\sqrt{\ch{2\theta_0}}}{2\ch{\theta_0}} \quad \quad \quad \quad \quad \quad \quad \quad \quad \quad \quad \quad \frac{1-k}{k+1} = \frac{\sqrt{\ch{2\theta_0}}}{2\ch{\theta_0}}.
    \end{align*}

\begin{itemize}

\item The relation $(k+1)\sqrt{\ch 2 \theta_0} = 2 (k-1) \ch \theta_0$ is possible:

\begin{align*}
&(k+1)\sqrt{\ch 2 \theta_0} = 2 (k-1) \ch \theta_0, \\
&(k+1)^2(2 \ch^2 \theta_0 - 1) = 4 (k-1)^2 \ch^2 \theta_0, \\
&(12 k - 2 k^2 - 2) \ch^2 \theta_0 = (k+1)^2.
    \end{align*}
    Let us find out for which $k \in \mathbb{Z}$ $\ch \theta_0 = \frac{k+1}{\sqrt{12 k - 2 k^2 - 2}} \geqslant 1$. To do this, consider an auxiliary real function:

    $$
    q(x) = \frac{x + 1}{\sqrt{-2x^2 + 12x - 2}}.
    $$

    First, let's deal with the domain of definition:
    \begin{align*}
    & -2x^2 + 12x - 2 \geqslant 0 \Leftrightarrow x^2 - 6x + 1 \leqslant 0.\\
    & x^2 - 6x + 1 = 0. \quad D/4 = 9 - 1 = 8, \quad x_{1,2} = 3 \pm 2\sqrt{2}.\\
    & 0 < 3-2\sqrt{2} < 1, \quad 5 < 3+2\sqrt{2} < 6.\\
    & -2x^2 + 12x - 2 \geqslant 0 \Leftrightarrow \\
    & \Leftrightarrow 3-2\sqrt{2} \leqslant x \leqslant 3 + 2\sqrt{2}.
    \end{align*}

    Domain of definition: $x \in \left(  3-2\sqrt{2}, 3 + 2\sqrt{2} \right)$.

    Let's find the limits of the function as it approaches the boundaries of the domain:

    \begin{align*}
    & \underset{x \rightarrow (3-2\sqrt{2}) + 0}{lim} q(x) = \underset{x \rightarrow (3-2\sqrt{2}) + 0}{lim} \frac{x + 1}{\sqrt{-2x^2 + 12x - 2}} = +\infty, \\
    & \underset{x \rightarrow (3+2\sqrt{2}) - 0}{lim} q(x) = \underset{x \rightarrow (3+2\sqrt{2}) - 0}{lim} \frac{x + 1}{\sqrt{-2x^2 + 12x - 2}} = +\infty.
    \end{align*}

    Then solve the equation $q(x) = 1$:

    \begin{align*}
    & \frac{x + 1}{\sqrt{-2x^2 + 12x - 2}} = 1 \Leftrightarrow x + 1 = \sqrt{-2x^2 + 12x - 2} \Leftrightarrow (x+1)^2 = -2x^2 +12x - 2, \quad 3-2\sqrt{2} \leqslant x \leqslant 3 + 2\sqrt{2} \Leftrightarrow \\
    & \Leftrightarrow x^2 + 2x + 1 = -2x^2 + 12x - 2 \Leftrightarrow 3x^2 - 10x + 3 = 0 \quad D/4 = 25 - 9 = 16, \quad x_{1,2} = \frac{5 \pm 4}{3}, \quad x_1 = \frac{1}{3},\ x_2 = 3.
    \end{align*}

    Finally, determine the intervals of monotonicity:

    \begin{align*}
    & \frac{d q(x)}{dx} = \frac{d}{dx}\frac{x + 1}{\sqrt{-2x^2 + 12x - 2}} = \frac{\sqrt{-2x^2 + 12x - 2} - (x+1)\frac{-4x + 12}{2\sqrt{-2x^2 + 12x - 2}}}{\left( \sqrt{-2x^2 + 12x - 2}\right)^2} = \frac{-2x^2 + 12x - 2 + (x+1)(2x-6)}{\left( -2x^2 + 12x - 2 \right)^{3/2}} = \\
    & = \frac{-2x^2 + 12x - 2 + 2x^2 - 4x - 6}{\left( -2x^2 + 12x - 2 \right)^{3/2}} = \frac{8x - 8}{\left( -2x^2 + 12x - 2 \right)^{3/2}}.
    \end{align*}

    We obtain that $x = 1$ is a local extremum point.

    $$
    q(1) = \frac{2}{\sqrt{-2 + 12 -2}} = \frac{\sqrt{2}}{2} < 1.
    $$

    For $x<1$, the function $q(x)$ decreases, and for $x > 1$, the function $q(x)$ increases. $x = 1$ is a minimum point, since as it approaches the boundaries, the function tends to $+\infty$.

    Thus,

    \begin{equation*}
    \frac{x + 1}{\sqrt{-2x^2 + 12x - 2}} \geqslant 1 \Leftrightarrow x \in [3-2\sqrt{2}, \frac{1}{3}] \cup [3, 3 + 2\sqrt{2}].
    \end{equation*}

    And the possible integer values are, respectively, $k = 3$, $4$, $5$. And the values of $\theta_0$ are thus:

    $$
    \theta_0 = \pm \arcosh{\frac{4}{\sqrt{36-20}}} = 0, \quad \theta_0 = \pm \arcosh{\frac{5}{\sqrt{48 - 34}}} = \pm \arcosh{\frac{5}{\sqrt{14}}}, \quad \theta_0 = \pm \arcosh{\frac{6}{\sqrt{60 - 52}}} = \pm \arcosh{\frac{3}{\sqrt{2}}}.
    $$

    \item Consider the other relation: $2(1-k)\ch{\theta_0} = (k+1)\sqrt{\ch{2\theta_0}}$. Here $k$ can only be $-1$, $0$, or $1$ due to the signs of the expressions on the right and left sides.

    \begin{itemize}
        \item $k = -1$ cannot be:
        $$
        4\ch{\theta_0} = 0.
        $$
        \item $k = 1$ cannot be:
        $$
        2 \sqrt{\ch{2\theta_0}} = 0.
        $$
        \item $k = 0$ cannot be:
        $$
        2\ch{\theta_0} = \sqrt{\ch{2\theta_0}} \Leftrightarrow 4\ch^2{\theta_0} = 2\ch^2{\theta_0} - 1 \Leftrightarrow 2\ch^2{\theta_0} = -1.
        $$
    \end{itemize}

    \end{itemize}

    \end{proof}

    \begin{lemma}
    \label{racion}
    Let $u = \ch{\theta_0} + \frac{\sqrt{\ch{2\theta_0}}}{2}$, $v = \ch{\theta_0} - \frac{\sqrt{\ch{2\theta_0}}}{2} $, then there are rational relations between $u$ and $v$ for an infinite set of $\theta_0 \in \mathbb{R}$.
    \end{lemma}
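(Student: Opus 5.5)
The plan is to parametrize the ratio $u/v$ by a continuous function of $\theta_0$ and show that its range is a nondegenerate interval. An interval contains infinitely many rationals, and each of them is attained by some $\theta_0$.

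Since $\ch(2\theta_0)=2\ch^2\theta_0-1$, put $s=\ch\theta_0\geqslant 1$ and $w=\sqrt{2s^2-1}$. Then $u=s+\frac{w}{2}$ and $v=s-\frac{w}{2}$. Moreover $v>0$, because $4s^2>2s^2-1$, exactly as in the computation in the proof of Lemma \ref{celochisl} for $k=0$. So the ratio
$$
\rho(s)=\frac{u}{v}=\frac{2s+w}{2s-w}=\frac{1+\tau}{1-\tau},\qquad \tau(s)=\frac{w}{2s}=\frac{\sqrt{2s^2-1}}{2s}=\sqrt{\frac12-\frac{1}{4s^2}},
$$
is well defined for all $s\geqslant 1$.

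The next step is monotonicity. The map $s\mapsto\tau(s)$ is continuous and strictly increasing on $[1,+\infty)$, with $\tau(1)=\frac12$ and $\tau(s)\to\frac{\sqrt2}{2}$ as $s\to+\infty$. The map $\tau\mapsto\frac{1+\tau}{1-\tau}$ is strictly increasing on $[0,1)$. Hence $\rho$ is a continuous strictly increasing bijection from $[1,+\infty)$ onto $\bigl[3,\ \frac{1+\sqrt2/2}{1-\sqrt2/2}\bigr)=[3,\,3+2\sqrt2)$. This agrees with the range $[3,3+2\sqrt2]$ found for $q(x)$ in Lemma \ref{celochisl}.

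Given any rational $r=\frac{k}{n}\in(3,3+2\sqrt2)$, I will solve $\rho(s)=r$ explicitly, reusing the algebra of Lemma \ref{celochisl} with $k$ replaced by $r$:
$$
\ch^2\theta_0=\frac{(r+1)^2}{12r-2r^2-2}.
$$
The denominator is positive on this interval, and the resulting value is $\geqslant 1$ by the monotonicity analysis of $q$ already done there. This gives $\theta_0=\pm\arcosh\frac{r+1}{\sqrt{12r-2r^2-2}}$. Distinct $r$ give distinct $\theta_0$ (up to sign) because $\rho$ is injective.

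Since $\mathbb{Q}\cap(3,3+2\sqrt2)$ is infinite, for example it contains $3+\frac1N$ for all $N\geqslant 2$, we obtain infinitely many $\theta_0$ with $u=\frac{k}{n}v$. Lemma \ref{prop1}(2) then supplies the common period. Only $r=3,4,5$ are integers, consistent with Lemma \ref{celochisl}.

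The one delicate point is checking that squaring the relation $(r+1)w=2(r-1)s$ introduces no spurious roots. This holds because both sides are positive for $r>1$.
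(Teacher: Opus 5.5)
Your proof is correct, but it runs in the opposite direction from the paper's. The paper starts from an integer pair $(n,k)$ with $nu=kv$ and squares to get $\cosh^2\theta_0=\frac{(n+k)^2}{12nk-2k^2-2n^2}$. It then studies the two-variable function $q(x,y)=\frac{|x+y|}{\sqrt{12xy-2x^2-2y^2}}$ on the two cones where it is defined, using boundary limits, the sign of $\partial_y q$, and the minimum $\frac{\sqrt2}{2}$ on the diagonal $y=x$, to locate the set where $q\geqslant 1$. Finally it counts lattice points in the sector $3x<y<(3+2\sqrt2)x$, $x>0$.

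You instead treat $u/v=\frac{1+\tau}{1-\tau}$ as a function of $\cosh\theta_0$. You show it is a continuous increasing bijection of $[1,+\infty)$ onto $[3,3+2\sqrt2)$, and then use that this interval contains infinitely many rationals. Since $q(x,y)$ depends only on $y/x$, both arguments express the same fact: the attainable ratios are exactly $[3,3+2\sqrt2)$.

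Your version is shorter, and it makes explicit two points the paper leaves implicit:
\begin{itemize}
\item You check that squaring is reversible. The paper's set $q\geqslant 1$ also contains pairs with $k/n\leqslant\frac13$, which satisfy only the reciprocal relation $ku=nv$, not $nu=kv$.
\item You obtain infinitely many \emph{distinct} $\theta_0$ directly. The paper exhibits infinitely many integer pairs without noting that proportional pairs give the same $\theta_0$.
\end{itemize}
In exchange, the paper's formulation gives an explicit description of all admissible integer pairs $(n,k)$.

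One small slip: distinct $r$ give distinct $\theta_0$ (up to sign) simply because $u/v$ is a function of $\cosh\theta_0$ alone. Injectivity of $\rho$ gives the converse, which you do not need.
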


    \begin{proof}
    Let us find out if case (2) from Proposition \ref{prop1} is possible in our situation:

    \begin{equation}
    \label{chosinusy2}
    n \left( \ch{\theta_0} + \frac{\sqrt{\ch{2\theta_0}}}{2} \right) = k \left( \ch{\theta_0} - \frac{\sqrt{\ch{2\theta_0}}}{2} \right) \quad n,\, k \in \mathbb{Z}.
    \end{equation}

    \begin{equation*}
    (k-n)\ch{\theta_0} = (n+k)\frac{\sqrt{\ch{2\theta_0}}}{2}.
    \end{equation*}

        \begin{align*}
        & 4(k-n)^2\ch^2{\theta_0} = (n+k)^2 \ch{(2\theta_0)} \Leftrightarrow 4(k-n)^2 \ch^2{\theta_0} = (n+k)^2(2\ch^2{\theta_0} - 1) \Leftrightarrow \\
        & \Leftrightarrow \ch^2{\theta_0}\left( 4(k-n)^2 - 2(n+k)^2 \right) = -(n+k)^2 \Leftrightarrow \\
        & \Leftrightarrow \ch^2{\theta_0} = \frac{(n+k)^2}{2(n+k)^2 - 4(k-n)^2} = \frac{k^2 + n^2 - 2kn}{12nk - 2k^2 - 2n^2}.
        \end{align*} 

    We have reduced the problem to finding out for which $n$, $k \in \mathbb{Z}$

    $$
    \ch{\theta_0} = \frac{|k+n|}{\sqrt{12nk - 2k^2 - 2n^2}} \geqslant 1?
    $$

    Consider the corresponding function of two real variables:

    $$
    q(x,y) = \frac{|x+y|}{\sqrt{12xy - 2x^2 - 2y^2}}, \ x \neq 0, \ y \neq 0.
    $$

    First, let's deal with the domain of definition:

    $$
    6xy - x^2 - y^2 \geqslant 0 \Leftrightarrow y^2 - 6xy + x^2 \leqslant 0
    $$
    
    $$
    D/4 = 9x^2 - x^2 = 8x^2 \geqslant 0
    $$

    $$
    y^2 - 6xy + x^2 = 0 \Leftrightarrow y = \left( 3 \pm 2\sqrt{2} \right)x
    $$

    The domain of definition consists of the union of two regions:
    \begin{itemize}
    \item[(1)]
    $$
    \left( 3 - 2\sqrt{2} \right)x < y < \left( 3 + 2\sqrt{2} \right)x, \ x > 0.
    $$
    \item[(2)]
    $$
    \left( 3 + 2\sqrt{2} \right)x < y < \left( 3 - 2\sqrt{2} \right)x, \ x < 0.
    $$
    \end{itemize}

    Expand the modulus of our function $q(x,y)$:

    $$
    |x+y| = x+y \Leftrightarrow x+y \geqslant 0 \Leftrightarrow y \geqslant -x.
    $$

    $$
    |x+y| = -x-y \Leftrightarrow x+y < 0 \Leftrightarrow y < -x.
    $$

    Thus, on set (1) we have $|x+y| = x+y$, and on set (2) we have $|x+y| = -x-y$.

    We will act similarly on each of the regions as follows:

    \begin{itemize}
    \item[1.] Compute the limits of the function $q(x, y)$ on the boundary of the set;
    \item[2.] Compute the partial derivative $\partial_y q(x, y)$ and find intervals of monotonicity for each fixed $x$;
    \item[3.] Compare the values at the extremum points with the values on the boundaries and find out where $q(x, y) \geqslant 1$.
    \end{itemize}
    \begin{itemize}
        \item[(1)] 
        \begin{itemize}
            \item[1.] 
            \begin{align*}
            & \underset{x = const > 0,\ y \rightarrow (3-2\sqrt{2})x + 0}{lim} q(x,y) = \underset{x = const > 0,\ y \rightarrow (3-2\sqrt{2})x + 0}{lim}  \frac{x + (3-2\sqrt{2})x}{\sqrt{12x(3-2\sqrt{2})x - 2x^2 - 2(3-2\sqrt{2})^2x^2}} = +\infty;\\
            & \underset{x = const > 0,\ y \rightarrow (3+2\sqrt{2})x - 0}{lim}  q(x,y) = \underset{x = const > 0,\ y \rightarrow (3+2\sqrt{2})x - 0}{lim} \frac{x + \left( 3 + 2\sqrt{2} \right)x}{\sqrt{12x\left( 3 + 2\sqrt{2} \right)x - 2x^2 - 2\left( 3 + 2\sqrt{2} \right)^2x^2}} = + \infty.
            \end{align*}
            
            \item[2.]
            \begin{align*}
            & \partial_y q(x, y) = \frac{\sqrt{12xy - 2x^2 - 2y^2} - (x+y)\frac{6x-2y}{\sqrt{12xy - 2x^2 - 2y^2}}}{\left(\sqrt{12xy - 2x^2 - 2y^2}\right)^2} = \frac{12xy-2x^2-2y^2 - 6x^2-4xy + 2y^2}{\left( 12xy - 2x^2 - 2y^2 \right)^{3/2}} = \\
            & = \frac{-8x^2 + 8xy}{\left( 12xy - 2x^2 - 2y^2 \right)^{3/2}}.
            \end{align*}
            $$
            \partial_y q(x, y) = 0 \Leftrightarrow -8x^2 + 8xy = 0 \Leftrightarrow x(-x + y) = 0 \Leftrightarrow y = x.
            $$

            For $y > x$ and each fixed $x > 0$, $q(x, y)$ increases; for $y < x$, $q(x, y)$ decreases. Hence, $x$ is a local minimum point.

            $$
            q(x,x) = \frac{2x}{\sqrt{12x^2 - 2x^2 - 2x^2}} = \frac{2x}{2x\sqrt{2}} = \frac{\sqrt{2}}{2} < 1.
            $$
            
            \item[3.]
            Since the function tends to $+\infty$ as it approaches the boundary of the region, $\exists$ points $y_1 < x$ and $y_2 > x$ where $q(x, y)$ takes the value $1$. Accordingly, for $(3-2\sqrt{2})x < y \leqslant y_1$ and $y_2 \leqslant y < (3+2\sqrt{2})x$, we have $q(x, y) \geqslant 1$.

            \begin{align*}
            & \frac{x+y}{\sqrt{12xy - 2x^2 - 2y^2}} = 1 \Leftrightarrow x+y = \sqrt{12xy - 2x^2 - 2y^2} \Leftrightarrow x^2 + 2xy + y^2 = 12xy - 2x^2 - 2y^2 \Leftrightarrow \\
            & \Leftrightarrow 3y^2 - 10xy + 3x^2 = 0. \\
            & D/4 = 25x^2 - 9x^2 = 16x^2 \geqslant 0;\\
            & y = \frac{5 \pm 4}{3}x; \\
            & y_2 = 3x > x > y_1 = \frac{1}{3}x.
            \end{align*}
            
        \end{itemize}
        
        \item[(2)] 
        \begin{itemize}
            \item[1.]
            \begin{align*}
            & \underset{x = const < 0,\ y \rightarrow (3-2\sqrt{2})x - 0}{lim} q(x,y) = \underset{x = const < 0,\ y \rightarrow (3-2\sqrt{2})x - 0}{lim} \frac{-x - (3-2\sqrt{2})x}{\sqrt{12x(3-2\sqrt{2})x - 2x^2 - 2(3-2\sqrt{2})^2x^2}} = +\infty;\\
            & \underset{x = const < 0,\ y \rightarrow (3+2\sqrt{2})x + 0}{lim} q(x,y) = \underset{x = const < 0,\ y \rightarrow (3+2\sqrt{2})x + 0}{lim} \frac{-x - \left( 3 + 2\sqrt{2} \right)x}{\sqrt{12x\left( 3 + 2\sqrt{2} \right)x - 2x^2 - 2\left( 3 + 2\sqrt{2} \right)^2x^2}} = +\infty.
            \end{align*}
            \item[2.]
            \begin{align*}
            & \partial_y q(x,y) = \frac{-\sqrt{12xy - 2x^2 - 2y^2} + (x+y)\frac{6x-2y}{\sqrt{12xy - 2x^2 - 2y^2}}}{\left( \sqrt{12xy - 2x^2 - 2y^2} \right)^2} = \frac{-12xy+2x^2+2y^2 + 6x^2+4xy-2y^2}{\left( 12xy - 2x^2 - 2y^2 \right)^{3/2}} = \\
            & = \frac{8x^2 - 8xy}{\left( 12xy - 2x^2 - 2y^2 \right)^{3/2}}.
            \end{align*}
            $$
            \partial_y q(x, y) = 0 \Leftrightarrow 8x^2 - 8xy = 0 \Leftrightarrow x(x - y) = 0 \Leftrightarrow y = x.
            $$

            For $y > x$ and each fixed $x < 0$, the function $q(x, y)$ increases; for $y < x$, the function $q(x, y)$ decreases. Hence, $x$ is a local minimum point.

            $$
            q(x,x) = \frac{-2x}{\sqrt{12x^2 - 2x^2 - 2x^2}} = \frac{-2x}{-2x\sqrt{2}} = \frac{\sqrt{2}}{2} < 1.
            $$
            
            \item[3.]
            Since the function tends to $+\infty$ as it approaches the boundary of the region, $\exists$ points $y_1 < x$ and $y_2 > x$ where $q(x, y)$ takes the value $1$. Accordingly, for $(3+2\sqrt{2})x < y \leqslant y_1$ and $y_2 \leqslant y < (3-2\sqrt{2})x$, we have $q(x, y) \geqslant 1$.

            \begin{align*}
            & \frac{-x-y}{\sqrt{12xy - 2x^2 - 2y^2}} = 1 \Leftrightarrow -x-y = \sqrt{12xy - 2x^2 - 2y^2} \Leftrightarrow x^2 + 2xy + y^2 = 12xy - 2x^2 - 2y^2 \Leftrightarrow \\
            & \Leftrightarrow 3y^2 - 10xy + 3x^2 = 0. \\
            & D/4 = 25x^2 - 9x^2 = 16x^2 \geqslant 0;\\
            & y = \frac{5 \pm 4}{3}x; \\
            & y_1 = 3x < x < y_2 = \frac{1}{3}x.
            \end{align*}
            
        \end{itemize}
                    
    \end{itemize}

    Now let's show that the obtained union of regions contains an infinite number of integer pairs.

    Consider one of the regions, for example, $ 3x < y < (3+2\sqrt{2})x$, $x > 0$. Select integer $x = k \in \mathbb{Z}$. Then $3k < y < (3+2\sqrt{2})k$.

    The difference between the left and right boundaries is $2k\sqrt{2}$, and $2 < 2\sqrt{2} < 3$. Therefore, as $k$ increases, the number of integers $y$ belonging to this interval increases. Hence, there are infinitely many such pairs.

    \end{proof}

\section{Sectional Curvature}

We have the orthonormal case: $-g(X_1,X_1) = g(X_2,X_2) = g(X_3,X_3) = 1$. And also: $[X_1, X_2] = X_3$, $[X_3, X_1] = X_2$, $[X_2, X_3] = X_1$, i.e. $c_{12}^3 = 1 = c_{31}^2 = c_{23}^1$, $c_{21}^3 = -1 = c_{13}^2 = c_{32}^1$, $c_{ij}^k = 0$ others. Note that $c_{32}^1 - c_{21}^3 - c_{13}^2 = -1 + 1 + 1 = 1$.

We use the formula obtained in \cite{curv_lor}.

Take vector fields $v = v^1X_1 + v^2X_2 + v^3X_3$, $w = w^1X_1 + w^2X_2 + w^3X_3$, where $v^j = \const$, $w^j = \const$, $j=1,2,3$.

\begin{theorem}
\label{sect_curv}
The sectional curvature of the group $\SU(2)$ on the section spanned by the vector fields $v$ and $w$ is given by the following formula:
\begin{equation}
\label{sect_kriv}
K = \frac{w^1w^1\left( \frac{v^2v^2}{4} - \frac{v^3v^3}{4} \right) + w^2w^2\left( -\frac{v^1v^1}{4} + \frac{3v^3v^3}{4} \right) + w^3w^3\left( -\frac{v^1v^1}{4} + \frac{7v^2v^2}{4} \right) - w^1w^3\frac{v^1v^3}{2} - w^2w^3\frac{5v^2v^3}{2}}{\left( -(v^1)^2 + \sum_{k=2}^3(v^k)^2 \right) \cdot \left( -(w^1)^2 + \sum_{k=2}^3(w^k)^2 \right) - \left( -v^1w^1 + v^2w^2 + v^3w^3 \right)^2}.
\end{equation}
\end{theorem}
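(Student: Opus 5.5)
The plan is to compute the curvature directly from the Levi-Civita connection of the left-invariant Lorentzian metric $g$, using the formula of \cite{curv_lor}. Since $v$ and $w$ have constant coefficients in the frame, every ingredient reduces to bilinear algebra in the structure constants $c_{ij}^k$ listed above. Throughout I write $\varepsilon_1=-1$, $\varepsilon_2=\varepsilon_3=1$, so that $g(X_i,X_j)=\varepsilon_i\delta_{ij}$.

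\emph{Step 1: the connection.} For left-invariant fields $g(X_i,X_j)$ is constant, so the Koszul formula reduces to
$$
2g(\nabla_{X_i}X_j,X_k)=g([X_i,X_j],X_k)-g([X_j,X_k],X_i)+g([X_k,X_i],X_j).
$$
Hence
$$
\nabla_{X_i}X_j=\sum_k \Gamma_{ij}^k X_k,\qquad \Gamma_{ij}^k=\tfrac12\varepsilon_k\big(\varepsilon_k c_{ij}^k-\varepsilon_i c_{jk}^i+\varepsilon_j c_{ki}^j\big).
$$
With only the cyclic constants $c_{12}^3=c_{31}^2=c_{23}^1=1$ nonzero, I will tabulate all nine products $\nabla_{X_i}X_j$. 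The sign $\varepsilon_1=-1$ breaks the symmetry between indices, which is why the answer is not invariant under permuting the indices. I will check the table against torsion-freeness, $\nabla_{X_i}X_j-\nabla_{X_j}X_i=[X_i,X_j]$, and metric compatibility, $\Gamma_{ij}^k\varepsilon_k=-\Gamma_{ik}^j\varepsilon_j$.

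\emph{Step 2: the curvature operator.} Extend $\nabla$ bilinearly: $\nabla_vw=\sum v^iw^j\nabla_{X_i}X_j$. Because the coefficients are constant, $\nabla_vw$ is again a constant-coefficient combination of the frame, and $[v,w]=\sum v^iw^j c_{ij}^kX_k$. Then
$$
R(v,w)w=\nabla_v\nabla_ww-\nabla_w\nabla_vw-\nabla_{[v,w]}w,
$$
which is computed purely by substitution into the table from Step 1. The numerator of \eqref{sect_kriv} is $g(R(v,w)w,v)$. The denominator is $g(v,v)g(w,w)-g(v,w)^2$, which is exactly the expression shown, since $g(v,w)=-v^1w^1+v^2w^2+v^3w^3$. (The sign convention must match \cite{curv_lor}; I will take theirs.)

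\emph{Step 3: collecting terms.} Expand $g(R(v,w)w,v)$ as a quartic form $\sum A_{ijkl}v^iv^jw^kw^l$ and group the monomials by $w^kw^l$ to compare with \eqref{sect_kriv}. The main obstacle is this bookkeeping. There are many sign-sensitive terms coming from $\varepsilon_1$ and from the three pieces of $R$. The stated numerator is also not manifestly antisymmetric-compatible: it lacks the mixed terms $v^iw^jv^jw^i$ that a curvature form should contain, and its coefficients ($3/4$, $7/4$, $5/2$) are irregular. So I expect either substantial cancellation or a need to reconcile with the precise formula of \cite{curv_lor}.

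I will therefore do the expansion systematically. First compute the components $R_{ijkl}=g(R(X_i,X_j)X_k,X_l)$ for the six index pairs. Then check them against the Bianchi identities and the pair symmetries before contracting with $v^iw^jw^kv^l$. As a further consistency check, I will evaluate $K$ on the coordinate planes $\mathrm{span}(X_i,X_j)$ and compare with the paper's formula specialised to those planes.
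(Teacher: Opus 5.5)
Your Steps 1--2 are a sound method, and your suspicion in Step 3 is correct. But Step 3 cannot be completed, and no sign convention from \cite{curv_lor} will rescue it, because the stated formula is not a sectional curvature at all. By antisymmetry and pair symmetry of $R$, $g(R(v,w)w,v)$ is a quadratic form in the bivector coordinates $p_{ij}=v^iw^j-v^jw^i$. It is therefore symmetric under $v\leftrightarrow w$ and unchanged under $w\mapsto w+\lambda v$, and the denominator has the same properties. The stated numerator has neither property:
\begin{itemize}
\item For $(v,w)=(X_1,X_2)$ it gives $K=\tfrac14$, but for $(X_2,X_1)$ it gives $K=-\tfrac14$; this is the paper's own first Example.
\item For $(v,w)=(X_3,X_2)$ it gives $\tfrac34$, not the $\tfrac74$ claimed in the third Example.
\item It breaks the symmetry induced by the metric-preserving Lie algebra automorphism $X_1\mapsto -X_1$, $X_2\leftrightarrow X_3$: compare the coefficients $\tfrac34$ and $\tfrac74$ of $(v^3w^2)^2$ and $(v^2w^3)^2$.
\end{itemize}
So the ``substantial cancellation'' you hope for cannot happen. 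The hedge at the end of Step 3 is exactly where the argument fails.

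If you carry out Steps 1--2, you get $\nabla_{X_i}X_i=0$, $\nabla_{X_1}X_2=\tfrac32X_3$, $\nabla_{X_1}X_3=-\tfrac32X_2$, $\nabla_{X_2}X_1=\tfrac12X_3$, $\nabla_{X_2}X_3=\tfrac12X_1$, $\nabla_{X_3}X_1=-\tfrac12X_2$, $\nabla_{X_3}X_2=-\tfrac12X_1$. The curvature operator is then diagonal on bivectors: $g(R(X_1,X_2)X_2,X_1)=g(R(X_1,X_3)X_3,X_1)=\tfrac14$, $g(R(X_2,X_3)X_3,X_2)=\tfrac74$, and all mixed components vanish. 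Hence, in the standard convention $K=g(R(v,w)w,v)/Q(v,w)$,
\[
K=\frac{\tfrac14\bigl(p_{12}^2+p_{13}^2\bigr)+\tfrac74\,p_{23}^2}{-p_{12}^2-p_{13}^2+p_{23}^2}.
\]
This gives $K(X_1,X_2)=K(X_1,X_3)=-\tfrac14$ and $K(X_2,X_3)=\tfrac74$. As a check, $2\sum_{i<j}K_{ij}=\tfrac52$ matches the scalar curvature computed independently from the Killing form. Expanded, the numerator has $w^1w^1$-coefficient $\tfrac14\bigl((v^2)^2+(v^3)^2\bigr)$, a term $-\tfrac12 v^1v^2w^1w^2$, and $w^2w^3$-coefficient $-\tfrac72 v^2v^3$. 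None of these matches the statement, with either global sign.

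So your plan, done correctly, disproves the theorem as stated and produces this corrected formula instead. The paper's Corollary on variable curvature still holds with the corrected values. As for where the paper goes wrong: if $D^k_{v,l}$ is the $X_k$-component of $\nabla_{X_l}v$, as eight of the nine entries in the first table block indicate, then $D^2_{v,1}$ should be $-\tfrac32 v^3$, not $-\tfrac12 v^3$. That is at least one source of the error in the paper's computation.
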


\begin{example}
$v = X_1$, $w = X_2$:
\begin{equation*}
K = \frac{-\frac{1}{4}}{-1} = \frac{1}{4}.
\end{equation*}
$v = X_2$, $w = X_1$:
\begin{equation*}
K = \frac{\frac{1}{4}}{-1} = -\frac{1}{4}.
\end{equation*}
\end{example}

\begin{example}
$v = X_1$, $w = X_3$:
\begin{equation*}
K = \frac{-\frac{1}{4}}{-1} = \frac{1}{4}.
\end{equation*}
$v = X_3$, $w = X_1$:
\begin{equation*}
K = \frac{-\frac{1}{4}}{-1} = \frac{1}{4}.
\end{equation*}
\end{example}

\begin{example}
$v = X_2$, $w = X_3$:
\begin{equation*}
K = \frac{\frac{7}{4}}{1} = \frac{7}{4}.
\end{equation*}
$v = X_3$, $w = X_2$:
\begin{equation*}
K = \frac{\frac{7}{4}}{1} = \frac{7}{4}.
\end{equation*}
\end{example}

\begin{corollary}
The group $\SU(2)$ with the considered Lorentzian structure has variable sectional curvature, therefore it is locally non-isometric to Lorentzian manifolds of constant curvature: Minkowski space, de Sitter space, and anti-de Sitter space.
\end{corollary}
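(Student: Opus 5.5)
The statement to prove is the last Corollary: $\SU(2)$ with this Lorentzian structure has variable sectional curvature, so it is not locally isometric to Minkowski, de Sitter or anti-de Sitter space. The plan is to exhibit two non-degenerate $2$-planes at one point with different curvature values. The invariant being used is the one that matters for local isometry. A local isometry between Lorentzian manifolds preserves the metric and the Levi-Civita connection, hence the Riemann tensor. It therefore preserves the sectional curvature $K(\Pi)$ of every non-degenerate $2$-plane $\Pi$. Minkowski, de Sitter and anti-de Sitter spaces have constant curvature: $K \equiv 0$, $K \equiv 1/r^2>0$ and $K\equiv -1/r^2<0$ respectively. At every point, every non-degenerate plane has the same value $K$.

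\textbf{Step 1.} I apply Theorem~\ref{sect_curv} at the identity, or at any point, since the frame $X_1,X_2,X_3$ is left-invariant and formula \eqref{sect_kriv} has constant coefficients. Take the plane spanned by $X_1,X_3$; by the Examples, $K=1/4$. Take the plane spanned by $X_2,X_3$; by the Examples, $K=7/4$.

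\textbf{Step 2.} Both planes are non-degenerate, so these values are well defined. The Gram determinant in the denominator of \eqref{sect_kriv} equals $-1$ for the first plane (a Lorentzian plane) and $1$ for the second (a spacelike plane). Since $1/4\neq 7/4$, sectional curvature is not constant even at a single point. Hence it is not constant on $\SU(2)$.

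\textbf{Step 3.} Suppose a neighbourhood $U\subset\SU(2)$ were isometric to an open subset of a constant-curvature space. The differential of the isometry at $q\in U$ maps non-degenerate planes to non-degenerate planes and preserves $K$. Then all non-degenerate planes at $q$ would have the same curvature, contradicting Step 2. This excludes all three model spaces simultaneously, for every curvature radius.

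The main obstacle is the legitimacy of the curvature values themselves. The Example pairs $(X_1,X_2)$ and $(X_2,X_1)$ give $\pm 1/4$, which signals that \eqref{sect_kriv} as written is not symmetric in $v,w$, whereas true sectional curvature is. To be safe, I would rely only on plane pairs where both orderings give the same value: $(X_1,X_3)$ giving $1/4$ and $(X_2,X_3)$ giving $7/4$. Ideally I would also verify these two numbers independently. For an orthonormal left-invariant frame, the Koszul formula gives $\nabla_{X_i}X_j$ from the structure constants. Then compute $\langle R(X_i,X_j)X_j,X_i\rangle/(\langle X_i,X_i\rangle\langle X_j,X_j\rangle)$. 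Any two distinct values suffice for the conclusion.
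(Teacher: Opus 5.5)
Your route is the paper's route. The paper deduces the corollary directly from the Examples after Theorem~\ref{sect_curv}, and your Steps~2 and~3 supply the reasoning it leaves implicit: the planes are non-degenerate, local isometries preserve sectional curvature, and $K$ is constant on the three model spaces.

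The real weakness is the justification of the numbers in Step~1, and your safeguard there does not work. Agreement of the two orderings cannot certify a value produced by a formula you already suspect is faulty. In fact it fails: the value $1/4$ on $\operatorname{span}(X_1,X_3)$ is wrong. Do the Koszul computation you list as optional, using $2g(\nabla_{X_i}X_j,X_k)=g([X_i,X_j],X_k)-g([X_j,X_k],X_i)+g([X_k,X_i],X_j)$ with $g=\mathrm{diag}(-1,1,1)$. It gives $\nabla_{X_i}X_i=0$ and
\begin{gather*}
\nabla_{X_1}X_2=\tfrac32X_3,\qquad \nabla_{X_1}X_3=-\tfrac32X_2,\qquad \nabla_{X_2}X_1=\tfrac12X_3,\\
\nabla_{X_2}X_3=\tfrac12X_1,\qquad \nabla_{X_3}X_1=-\tfrac12X_2,\qquad \nabla_{X_3}X_2=-\tfrac12X_1 .
\end{gather*}
Hence $R(X_1,X_3)X_3=-\tfrac14X_1$ and $R(X_2,X_3)X_3=\tfrac74X_2$. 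With $K(v,w)=g(R(v,w)w,v)/Q(v,w)$ this yields $K(X_1,X_2)=K(X_1,X_3)=-\tfrac14$ and $K(X_2,X_3)=\tfrac74$.

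So this computation should be the actual proof of Step~1, not a cross-check. With it, your argument goes through unchanged, because $-\tfrac14\neq\tfrac74$. The conclusion does not depend on the sign convention for $R$: reversing it flips all values at once and cannot make them equal.
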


\begin{proof}[Proof of Theorem \ref{sect_curv}]
The general formula from \cite{curv_lor} is as follows:
$$
K = \frac{g(R_{v,w}v, w)}{Q(v,w)} = \frac{-w^1R_{v, w, v}^1 + w^2R_{v,w,v}^2 + w^3R_{v,w,v}^3}{\left( -(v^1)^2 + \sum_{k=2}^3(v^k)^2 \right) \cdot \left( -(w^1)^2 + \sum_{k=2}^3(w^k)^2 \right) - \left( -v^1w^1 + v^2w^2 + v^3w^3 \right)^2},
$$
where for $k=1,2,3$:

$$
R_{v,w,v}^k = \sum_{l,m = 1}^3\left( w^lD_{v,v,l,m}^k - v^lD_{w,v,l,m}^k \right) + \sum_{l=1}^3\left( \sum_{m=1}^2\sum_{m<j\leqslant 3}(v^mw^j - v^jw^m)c_{m,j}^l + \sum_{m=1}^3\left( \left( (v^mX_m)w^l \right) - w^m\left( X_mv^l \right) \right) \right)D_{v,l}^k;
$$

\begin{align*}
    & D_{v,1}^1 = 0,\qquad D_{v,1}^2 = - \frac{v^3}{2},\qquad D_{v,1}^3 = \frac{3v^2}{2};\\
    & D_{v,2}^1 =  \frac{v^3}{2},\qquad D_{v,2}^2 = 0,\qquad D_{v,2}^3 = \frac{v^1}{2},\\
    & D_{v,3}^1 = -\frac{v^2}{2},\qquad D_{v,3}^2 = -\frac{v^1}{2},\qquad D_{v,3}^3 = 0;\\
    & D_{w,v,1,1}^1 = 0,\qquad D_{w,v,1,1}^2 = - \frac{3w^1v^2}{4},\qquad  D_{w,v,1,1}^3 =  -\frac{3w^1v^3}{4}; \\ 
    & D_{w,v,2,1}^1 =  \frac{3w^1v^2}{4},\qquad D_{w,v,2,1}^2 = 0,\qquad  D_{w,v,2,1}^3 = 0; \\ & D_{w,v,3,1}^1 = \frac{w^1v^3}{4},\qquad D_{w,v,3,1}^2 = 0,\qquad  D_{w,v,3,1}^3 = 0;\\
    & D_{w,v,1,2}^1 = 0,\qquad D_{w,v,1,2}^2 = - \frac{w^2v^1}{4},\qquad D_{w,v,1,2}^3 = 0;\\
    & D_{w,v,2,2}^1 = \frac{w^2v^1}{4},\qquad D_{w,v,2,2}^2 = 0,\qquad D_{w,v,2,2}^3 = \frac{w^2v^3}{4};\\
    & D_{w,v,3,2}^1 = 0,\qquad D_{w,v,3,2}^2 = -\frac{w^2v^3}{4},\qquad D_{w,v,3,2}^3 = 0;\\
    & D_{w,v,1,3}^1 = 0,\qquad D_{w,v,1,3}^2 = 0,\qquad D_{w,v,1,3}^3 = -\frac{3w^3v^1}{4};\\
    & D_{w,v,2,3}^1 = 0,\qquad D_{w,v,2,3}^2 = 0,\qquad D_{w,v,2,3}^3 = -\frac{w^3v^2}{4};\\
    & D_{w,v,3,3}^1 = \frac{w^3v^1}{4},\qquad D_{w,v,3,3}^2 = \frac{w^3v^2}{4},\qquad D_{w,v,3,3}^3 = 0;\\
    & D_{v,v,1,1}^1 = 0,\qquad D_{v,v,1,1}^2 = - \frac{3v^1v^2}{4},\qquad D_{v,v,1,1}^3 =  -\frac{3v^1v^3}{4}; \\
    & D_{v,v,2,1}^1 =  \frac{3v^1v^2}{4},\qquad D_{v,v,2,1}^2 = 0,\qquad D_{v,v,2,1}^3 = 0; \\
    & D_{v,v,3,1}^1 = \frac{v^1v^3}{4},\qquad D_{v,v,3,1}^2 = 0,\qquad D_{v,v,3,1}^3 = 0;\\
    & D_{v,v,1,2}^1 = 0,\qquad D_{v,v,1,2}^2 = - \frac{v^2v^1}{4},\qquad D_{v,v,1,2}^3 = 0;\\
    & D_{v,v,2,2}^1 = \frac{v^2v^1}{4},\qquad D_{v,v,2,2}^2 = 0,\qquad D_{v,v,2,2}^3 = \frac{v^2v^3}{4};\\
    & D_{v,v,3,2}^1 = 0,\qquad D_{v,v,3,2}^2 = -\frac{v^2v^3}{4},\qquad D_{v,v,3,2}^3 = 0;\\
    & D_{v,v,1,3}^1 = 0,\qquad D_{v,v,1,3}^2 = 0,\qquad D_{v,v,1,3}^3 = -\frac{3v^3v^1}{4};\\
    & D_{v,v,2,3}^1 = 0,\qquad D_{v,v,2,3}^2 = 0,\qquad D_{v,v,2,3}^3 = -\frac{v^3v^2}{4};\\
    & D_{v,v,3,3}^1 = \frac{v^3v^1}{4},\qquad D_{v,v,3,3}^2 = \frac{v^3v^2}{4},\qquad D_{v,v,3,3}^3 = 0;
    \end{align*}

    \begin{align*}
    R_{v,w,v}^1 & = \sum_{l,m = 1}^3\left( w^lD_{v,v,l,m}^1 - v^lD_{w,v,l,m}^1 \right) + \sum_{l=1}^3\left( \sum_{m=1}^2\sum_{m<j\leqslant 3}(v^mw^j - v^jw^m)c_{m,j}^l + \sum_{m=1}^3\left( \left( (v^mX_m)w^l \right) - w^m\left( X_mv^l \right) \right) \right)D_{v,l}^1 = \\
    & = \sum_{m=1}^3\left( w^1D_{v,v,1,m}^1 - v^1D_{w,v,1,m}^1 \right) + \sum_{m=1}^3\left( w^2D_{v,v,2,m}^1 - v^2D_{w,v,2,m}^1 \right) + \sum_{m=1}^3\left( w^3D_{v,v,3,m}^1 - v^3D_{w,v,3,m}^1 \right) + \\
    & + (v^1w^3 - v^3w^1)c_{1,3}^2D_{v,2}^1 + (v^1w^2 - v^2w^1)c_{1,2}^3D_{v,3}^1 =  \left( w^2\frac{3v^1v^2}{4} + w^2\frac{v^2v^1}{4} - v^2\frac{3w^1v^2}{4} - v^2\frac{w^2v^1}{4} \right) + \\
    & + \left( w^3\frac{v^1v^3}{4}+w^3\frac{v^3v^1}{4} - v^3\frac{w^1v^3}{4}-v^3\frac{w^3v^1}{4} \right) + \frac{v^3}{2}(-1)\left( v^1w^3 - v^3w^1 \right) + \left( -\frac{v^2}{2} \right)\left( v^1w^2 - v^2w^1 \right) = \\
    & = w^1\left( -\frac{3v^2v^2}{4} - \frac{v^3v^3}{4} + \frac{v^3v^3}{2} + \frac{v^2v^2}{2} \right) + w^2\left( \frac{3v^1v^2}{4} + \frac{v^1v^2}{4} - \frac{v^1v^2}{4} - \frac{v^1v^2}{2} \right) + w^3\left( \frac{v^1v^3}{4} + \frac{v^1v^3}{4} - \frac{v^1v^3}{4} - \frac{v^1v^3}{2} \right) = \\
    & = w^1\left( -\frac{v^2v^2}{4} + \frac{v^3v^3}{4} \right) + w^2\frac{v^1v^2}{4} - w^3\frac{v^1v^3}{4}.
    \end{align*}

    \begin{align*}
    R_{v,w,v}^2 & = \sum_{l,m = 1}^3\left( w^lD_{v,v,l,m}^2 - v^lD_{w,v,l,m}^2 \right) + \sum_{l=1}^3\left( \sum_{m=1}^2\sum_{m<j\leqslant 3}(v^mw^j - v^jw^m)c_{m,j}^l + \sum_{m=1}^3\left( \left( (v^mX_m)w^l \right) - w^m\left( X_mv^l \right) \right) \right)D_{v,l}^2 = \\
    & = \sum_{m=1}^3\left( w^1D_{v,v,1,m}^2 - v^1D_{w,v,1,m}^2 \right) + \sum_{m=1}^3\left( w^2D_{v,v,2,m}^2 - v^2D_{w,v,2,m}^2 \right) + \sum_{m=1}^3\left( w^3D_{v,v,3,m}^2 - v^3D_{w,v,3,m}^2 \right) + \\
    & + (v^2w^3 - v^3w^2)c_{2,3}^1D_{v,1}^2 + (v^1w^2 - v^2w^1)c_{1,2}^3D_{v,3}^2 = \\
    & = \left(w^1\left( -\frac{3v^1v^2}{4} \right) + w^1\left( -\frac{v^2v^1}{4} \right) - v^1\left( -\frac{3w^1v^2}{4} \right) - v^1 \left( -\frac{w^2v^1}{4} \right) \right) + \\
    & + \left(w^3\left( -\frac{v^2v^3}{4} \right) + w^3\left( \frac{v^3v^2}{4} \right) - v^3\left( -\frac{w^2v^3}{4} \right) - v^3\left( \frac{w^3v^2}{4} \right) \right) + (v^2w^3 - v^3w^2)\left( - \frac{v^3}{2} \right) + (v^1w^2 - v^2w^1)\left(- \frac{v^1}{2} \right) = \\
    & = w^1\left( -\frac{3v^1v^2}{4} - \frac{v^1v^2}{4} + \frac{3v^1v^2}{4} + \frac{v^1v^2}{2} \right) + w^2\left( \frac{v^1v^1}{4} + \frac{v^3v^3}{4} + \frac{v^3v^3}{2} - \frac{v^1v^1}{2} \right) + w^3\left( -\frac{v^2v^3}{4} + \frac{v^2v^3}{4} - \frac{v^2v^3}{4} - \frac{v^2v^3}{2} \right) = \\
    & = w^1\left( \frac{v^1v^2}{4} \right) + w^2\left( -\frac{v^1v^1}{4} + \frac{3v^3v^3}{4} \right) + w^3\left( -\frac{3v^2v^3}{4}  \right).
    \end{align*}

    \begin{align*}
    R_{v,w,v}^3 & = \sum_{l,m = 1}^3\left( w^lD_{v,v,l,m}^3 - v^lD_{w,v,l,m}^3 \right) + \sum_{l=1}^3\left( \sum_{m=1}^2\sum_{m<j\leqslant 3}(v^mw^j - v^jw^m)c_{m,j}^l + \sum_{m=1}^3\left( \left( (v^mX_m)w^l \right) - w^m\left( X_mv^l \right) \right) \right)D_{v,l}^3 = \\
    & = \sum_{m=1}^3\left( w^1D_{v,v,1,m}^3 - v^1D_{w,v,1,m}^3 \right) + \sum_{m=1}^3\left( w^2D_{v,v,2,m}^3 - v^2D_{w,v,2,m}^3 \right) + \sum_{m=1}^3\left( w^3D_{v,v,3,m}^3 - v^3D_{w,v,3,m}^3 \right) + \\
    & + \frac{3v^2}{2}c_{23}^1\left( v^2w^3 - v^3w^2 \right) + \frac{v^1}{2}c_{13}^2\left( v^1w^3 - v^3w^1 \right) = \\
    & = \left( w^1\left( -\frac{3v^1v^3}{4} \right) + w^1 \left( -\frac{3v^3v^1}{4} \right) - v^1\left( -\frac{3w^1v^3}{4} \right) - v^1\left( -\frac{3w^3v^1}{4} \right) \right) + \\
    & + \left( w^2\frac{v^2v^3}{4} + w^2\left( -\frac{v^3v^2}{4} \right) - v^2\frac{w^2v^3}{4} - v^2 \left( -\frac{w^3v^2}{4} \right) \right) + \frac{3v^2}{2}\left( v^2w^3 - v^3w^2 \right) - \frac{v^1}{2}\left( v^1w^3 - v^3w^1 \right) = \\
    & = w^1\left( -\frac{3v^1v^3}{4} - \frac{3v^1v^3}{4} + \frac{v^1v^3}{4} + \frac{v^1v^3}{2}  \right) + w^2\left( \frac{v^2v^3}{4} - \frac{v^2v^3}{4} - \frac{v^2v^3}{4} - \frac{3v^2v^3}{2}    \right) + w^3\left( \frac{v^1v^1}{4} + \frac{v^2v^2}{4} + \frac{3v^2v^2}{2} - \frac{v^1v^1}{2} \right) = \\
    & = w^1\left( -\frac{3v^1v^3}{4} \right) + w^2\left( -\frac{7v^2v^3}{4} \right) + w^3\left( -\frac{v^1v^1}{4} + \frac{7v^2v^2}{4} \right).
    \end{align*}

    \begin{align*}
    & -w^1R_{v, w, v}^1 + w^2R_{v,w,v}^2 + w^3R_{v,w,v}^3 = \\
    & = -w^1\left( w^1\left( -\frac{v^2v^2}{4} + \frac{v^3v^3}{4} \right) + w^2\frac{v^1v^2}{4} - w^3\frac{v^1v^3}{4} \right) + w^2\left( w^1\left( \frac{v^1v^2}{4} \right) + w^2\left( -\frac{v^1v^1}{4} + \frac{3v^3v^3}{4} \right) + w^3\left( -\frac{3v^2v^3}{4}  \right) \right) + \\
    & + w^3\left( w^1\left( -\frac{3v^1v^3}{4} \right) + w^2\left( -\frac{7v^2v^3}{4} \right) + w^3\left( -\frac{v^1v^1}{4} + \frac{7v^2v^2}{4} \right) \right) = w^1w^1\left( \frac{v^2v^2}{4} - \frac{v^3v^3}{4} \right) + w^2w^2\left( -\frac{v^1v^1}{4} + \frac{3v^3v^3}{4} \right) + \\
    & + w^3w^3\left( -\frac{v^1v^1}{4} + \frac{7v^2v^2}{4} \right) + w^1w^2\left( -\frac{v^1v^2}{4} + \frac{v^1v^2}{4} \right) + w^1w^3\left( \frac{v^1v^3}{4} - \frac{3v^1v^3}{4} \right) + w^2w^3\left( -\frac{3v^2v^3}{4} - \frac{7v^2v^3}{4} \right) = \\
    & = w^1w^1\left( \frac{v^2v^2}{4} - \frac{v^3v^3}{4} \right) + w^2w^2\left( -\frac{v^1v^1}{4} + \frac{3v^3v^3}{4} \right) + w^3w^3\left( -\frac{v^1v^1}{4} + \frac{7v^2v^2}{4} \right) - w^1w^3\frac{v^1v^3}{2} - w^2w^3\frac{5v^2v^3}{2}.
    \end{align*}

    \end{proof}


\begin{thebibliography}{99}

\bibitem{lie_control}
Yu. L. Sachkov, {\it Control Theory on Lie Groups}, SMFN, 2008, Volume 27, 5–59

\bibitem{filippov}
A. F. Filippov, {\it Introduction to the Theory of Differential Equations}, M.: LENAND, 2022. --- 240 p.

\bibitem{notes}
Agrachev A.A., Sachkov Yu.L. Geometric Control Theory. – M.: Fizmatlit, 2005.

\bibitem{curv_lor}
A.Z. Ali, Yu.L. Sachkov,
Sectional Curvature and Structural Functions of a Two-Dimensional or Three-Dimensional Lorentzian Manifold, {\em submitted for publication}.
\end{thebibliography}
\end{document}